\DeclareMathOperator{\one}{\mathbbm{1}} %%%% indicator function
\numberwithin{equation}{section}
\newtheorem{theorem}{Theorem}[section]
\newtheorem{fact}{Fact}[section]
\newtheorem{lemma}{Lemma}[section]
\newtheorem{remark}{Remark}[section]
\newtheorem{assumption}{Assumption}[section]
\DeclareMathOperator{\Var}{Var}
\newcommand{\E}{\mbox{${\mathbb E}$}}
\newcommand{\reals}{{\mathbb R}}
\newcommand{\bbr}{\reals}
\def\Var{{\rm Var}}
\def\Cov{{\rm Cov}}
\begin{document}
\title[Outliers of Wigner Matrix]{Outlier eigenvalues and eigenvectors of generalized Wigner matrices with finite-rank perturbations}

%    author one information
\author[B. Bhattacharya]{Bishakh Bhattacharya} 
\address{Indian Statistical Institute, 203, B.T. Road, Kolkata.} 
\email{bishakh.rik@gmail.com}
\author[A. Chakrabarty]{Arijit Chakrabarty}
\address{Indian Statistical Institute, 203, B.T. Road, Kolkata.}
\email{arijit.isi@gmail.com}
\author[R. S. Hazra]{Rajat Subhra Hazra}
\address{Mathematical Institute, Leiden University, Einsteinweg 55, 2333 CC Leiden, The Netherlands}
\email{r.s.hazra@math.leidenuniv.nl}
%%    author four  information

\date{\today}

%\section{Abstract}
\begin{abstract}
A generalized Wigner matrix perturbed by a finite-rank deterministic matrix is considered. The fluctuations of the largest eigenvalues, which emerge outside the bulk of the spectrum, and the corresponding eigenvectors, are studied. Under certain assumptions on the perturbation and the matrix structure, we derive the first-order behavior of these eigenvalues and show that they are well separated from the bulk. The fluctuations of these eigenvalues are shown to follow a multivariate Gaussian distribution, and the asymptotic behavior of the associated eigenvectors is also studied. We prove central limit theorems that describe the asymptotic alignment of these eigenvectors with the perturbation's eigenvectors, as well as their Gaussian fluctuations around the origin for non-aligned components. Furthermore, we discuss the convergence of the eigenvector process in a Sobolev space framework. 
\end{abstract}

\keywords{generalized Wigner, empirical spectral distribution, outliers, eigenvector process}
\subjclass[2000]{05C80, 60B20, 60B10, 46L54}
\maketitle

\tableofcontents
\section{Introduction}
Random matrix theory (RMT) has played a pivotal role in understanding the behavior of large complex systems, with applications ranging from quantum physics to data science. One of the fundamental models in this area is the Wigner matrix, which refers to large symmetric matrices with independent and identically distributed (i.i.d.) zero-mean and unit-variance entries. The eigenvalue distribution of such matrices is well understood, with the celebrated Wigner semicircle law describing the bulk behavior of eigenvalues as the matrix size grows. There have been many extensions of the above result to the setting where entries are inhomogeneous, see \cite{anderson2006clt, shlyakhtenko1996random, zhu2020graphon}, for example. In the inhomogeneous setting, the bulk measure satisfies the quadratic vector equations, studied elaborately in \cite{ajanki2017universality, ajanki2019quadratic}. In \cite{chakrabarty2021spectra, zhu2020graphon}, it is shown that in an inhomogeneous Erd\H{o}s-R\'enyi setting, the bulk behaviour of the adjacency matrix is described by a deterministic compactly supported probability measure, which satisfies such an equation. In the classical Wigner setting, the behaviour of the top eigenvalues at the edge of the spectrum is much more complete. Under suitable moment assumptions, the largest eigenvalue converges to $2$ (the right edge of the semicircle) and its fluctuation on the $N^{-2/3}$ scale is governed by the Tracy–Widom law. Analogous edge universality results have been proved for a wide class of Wigner and Wigner-type ensembles, but a fully general theory for inhomogeneous variance profiles is still out of reach. Recently, for a random matrix with a (bounded)variance profile, \cite{ducatez2024large} showed, amongst other things, that the largest eigenvalue, in the first order, converges to the right endpoint of the spectrum.

However, practical systems are often subject to structured perturbations. In particular, finite-rank perturbations of Wigner matrices have garnered significant attention, especially in connection with the Baik–Ben Arous–P\'eché (BBP) transition \cite{baik2005phase, peche2006largest, feral2007largest}. The BBP phenomenon describes a phase transition where, under a rank-one perturbation, the largest eigenvalue separates from the bulk and exhibits new behavior. This result has been extended to various settings, including spiked covariance models in principal component analysis (PCA), where the largest eigenvalue corresponds to the signal and the bulk reflects noise. Some notable references are \cite{benaych2011eigenvalues,capitaine2009largest, baik2006eigenvalues,bassler2009eigenvalue, benaych2011fluctuations}.

 In the rank-one case, \cite{feral2007largest} showed that if the perturbation strength $\theta$ is below a critical value, then the largest eigenvalue sticks to the edge and has Tracy–Widom fluctuations, whereas above the said threshold, it separates from the bulk and fluctuates on an $O(1)$ scale with a non-universal limit law. Building on this, \cite{capitaine2009largest}, and later \cite{pizzo2013finite} analysed general finite-rank perturbations $M_N = W_N/\sqrt{N} + A_N$, where $A_N$ is a deterministic Hermitian matrix with finitely many non-zero eigenvalues $\alpha_1 \ge \cdots \ge \alpha_k$. They proved that for each fixed $i$,
\[
  \mu_i \xrightarrow{\text{a.s.}}
  \begin{cases}
    2, & |\alpha_i| \le 1, \\
    \alpha_i + \alpha_i^{-1}, & |\alpha_i| > 1,
  \end{cases}
\]
and established the following transition in the fluctuation behaviour: below the threshold one recovers the Tracy–Widom statistics at the edge, while above the threshold the outliers have asymptotically Gaussian fluctuations, with a variance that depends on the entry distribution of $W_N$. Related results in a more general framework, including a detailed description of the associated eigenvectors, were obtained by \cite{benaych2011eigenvalues} via free-probability techniques.

 The finite rank perturbations of the symmetric matrix also play a crucial role in the study of random graphs; see, for example,  (\cite{furedi1981eigenvalues}). The properties of the outlier eigenvalues and the corresponding eigenvectors for the Erd\H{o}s-R\'enyi was first studied by \cite{erdHos2013spectral} where it was shown that in a homogeneous Erd\H{o}s-R\'enyi setting, subject to a rank one perturbation, the centred largest eigenvalue under suitable scaling exhibits Gaussian fluctuation. The study was later extended to other eigenvalues and their properties. For example, it was shown in \cite{erdHos2012spectral} that the second largest eigenvalue under suitable centring and scaling exhibits Tracy-Widom Fluctuation. Studies involving random perturbations can be found in \cite{ji2020gaussian}. The above results can be extended to the fully connected regime in the Erd\H{o}s-R\'enyi random graphs and this was shown recently in \cite{tikhomirov2021outliers}. There has also been various extensions to the inhomogeneous Erd\H{o}s-R\'enyi random graph and matrices with variance profile (\cite{chakrabarty2020eigenvalues, boursier2024large, ajanki2017universality}).

In this paper, we consider a generalized Wigner matrix $W_N$ with a bounded variance profile, perturbed by a deterministic finite-rank matrix of the form
\[
  A_N = W_N + \theta_N \sum_{i=1}^k \alpha_i e_i e_i',
\]
where $\theta_N$ is a scalar that may grow with $N$, the $\alpha_i>0$ are fixed distinct signal strengths, and $e_1,\dots,e_k$ are deterministic unit vectors arising from discretizations of orthonormal functions on $[0,1]$. Our primary interest is in the \emph{outlier} eigenvalues of $A_N$ that separate from the $O(\sqrt{N})$ bulk due to the perturbation, and in the finer properties of the associated eigenvectors. Under mild assumptions on the variance profile and on the growth rate of $\theta_N$, we prove central limit theorems for the outlier eigenvalues, establish Gaussian fluctuation results for the alignment of the leading eigenvectors with the planted directions $e_i$.  Finally, we consider a functional eigenvector process by viewing the eigenvector as a function on $[0,1]$. In a Sobolev space framework, we prove a functional CLT, i.e., weak convergence of this eigenvector process to a Gaussian limit (which can be interpreted as a form of Gaussian random field on $[0,1]$) on the said space.

\subsection*{ Main contributions:}

This work provides a detailed analysis of eigenvector behavior in a class of dense random matrices with non-homogeneous variance profile, extending the scope of classical random matrix theory beyond the Wigner regime. Motivated by models in statistical physics and network theory, our setting captures inhomogeneity through a structured variance profile $f(x,y)$, enabling new phenomena that are absent in translation-invariant ensembles.

The principal contributions of this paper are the following.

\begin{itemize}
  \item Eigenvalue fluctuations (Theorem~\ref{Th.EigenValueCLT}).  
    We show that the top \(k\) outlier eigenvalues of a perturbed generalized Wigner matrix, where \(k\) is the rank of the perturbation, fluctuate on an \(O(1)\) scale and jointly converge to a multivariate Gaussian distribution.

  \item Eigenvector alignment and delocalization (Theorems \ref{Th.EigenVectorAlignment.CLT} and \ref{Decolalization}). 
    Under mild regularity of the variance profile and the planted perturbation, the leading eigenvectors align strongly with the deterministic signal directions while all other components remain negligible, and each eigenvector satisfies a sharp sup‐norm delocalization bound.

  \item Functional CLT for the eigenvector process (Theorem~\ref{EVecProMain}).
    Viewing the top eigenvectors as functions on \([0,1]\), we establish a Sobolev‐space central limit theorem for their fluctuation field, proving convergence to a centered Gaussian process with explicitly characterized covariance.

  \item New technical estimates.  
   In the results for the eigenvalues, the Gaussian limit arises from quadratic forms of the type $e_iWe_i^\prime$. In the eigenvector case, especially when it comes to understanding fluctuations, more nonlinearity enters the picture, and terms of the form $e_iW^2e_i^\prime$ become the key building block. To obtain these  results, we develop novel high‐moment and concentration bounds for quadratic and higher‐order forms of the random matrix, as well as precise resolvent expansions and delocalization controls that may be of independent interest. In fact, we give an application of the martingale central limit theorem for a quadratic form of the Wigner matrix (Theorem \ref{Th.MartingaleCLT}). 
\end{itemize}

\paragraph{\bf Outline:} In Section \ref{section:setup}, we formalize the model setup, assumptions, and state the main results. Section \ref{section:eigenvalue fluctuations} contains an outline of the proof of the eigenvalue fluctuation results, including preliminary estimates and the eigenvalue CLT. Since the ideas of the eigenvalue fluctuations are similar to \cite{chakrabarty2020eigenvalues}, the details of the proof of eigenvalue fluctuations are postponed to Appendix \ref{appendixA}. Section \ref{section:evseries} gives some technical estimates and series representations for the eigenvectors. Section \ref{section:alignment} is devoted to the eigenvector alignment results: we prove the asymptotic alignment and CLTs for eigenvector entries, the eigenvector delocalization. The functional CLT for the eigenvector process in the Sobolev space setting is proved in Section \ref{section:EVprocess}. The martingale CLT for a quadratic form of $W_N^2$ is proved in Section \ref{section:MartingaleCLT}.   Important technical lemmas and calculations are deferred to Appendix \ref{appendix:B}. 

\section{The Setup and the Main Results}\label{section:setup}
In this section, we describe the model of a perturbed generalized Wigner matrix  and present the main results on the behavior of its top eigenvalues and eigenvectors. The focus is on the fluctuations of these
quantities around their deterministic limits.

\paragraph{\bf Setup and Assumptions:} 
For each $N \ge 1$, let $W_N$ be a symmetric Wigner-type random matrix of size $N\times N$, defined by
\begin{equation}\label{eq:WignerTypeMatrix} W_N(j,\ell) \;=\; X_{\,j \wedge \ell,\, j \vee \ell}\,,
\end{equation} where $\{X_{j,\ell}: 1 \le j \le \ell \le N\}$ is a collection of independent (not necessarily
identically distributed) random variables with mean 0 and a variance profile given by
\begin{align*}
f\left(\frac{j}{N},\frac{\ell}{N}\right)\,,
\end{align*} 
for some fixed bounded Riemann-integrable function $f: [0,1]^2 \to [0,\infty)$. We impose a mild moment condition on the entries of $W_N$: there exists a constant $B>0$ such
that for all $m \ge 1$, \begin{equation}\label{eq:momentcond} \mathbb{E}[\,|X_{j,\ell}|^m\,] \;\le\; m^{\,B
m}\,. \end{equation} This condition (sometimes called a sub-exponential tail bound) is used to ensure
concentration of measure for various random quadratic forms of $W_N$.
Next, we define a deterministic finite-rank perturbation to $W_N$. Let $k \ge 1$ be fixed (not depending
on $N$). We consider \begin{equation}\label{eq:perturndefn} A_N \;=\; W_N \;+\; \theta_N \sum_{i=1}^{k}
\alpha_i\, e_i\,e_i'\,, \end{equation} where $\theta_N > 0$ is a scalar that may grow with $N$, and $\alpha_1
> \alpha_2 > \cdots > \alpha_k > 0$ are fixed distinct constants (the signal strengths). The vectors $e_1,\dots,e_k$ are deterministic unit vectors in $\mathbb{R}^N$ (thus $e_i$ is $N\times 1$ and $e_i'$ is its
transpose). These vectors represent the directions of the finite-rank perturbation. We assume that $e_i$ is
the discretization of some function $h_i$ on $[0,1]$. In particular, for each $1 \le i \le k$,

$$e_i^{\prime}=\frac{1}{\sqrt{N}}\left(h_i(1 / N), h_i(2 / N), \ldots, h_i(1)\right).$$
Equivalently, the $\ell$-th component of $e_i$ is $e_i(\ell) = h_i(\ell/N)/\sqrt{N}$. We refer to $h_i(x)$ (for
$x\in[0,1]$) as the underlying signal function corresponding to the perturbation vector $e_i$.
We will work under the following assumptions on the growth of $\theta_N$ and the regularity of the
functions $h_i$. In many of our results, only the first two assumptions \ref{assump:A1} and \ref{assump:A2} are needed; the third will
be required for some finer results (notably the functional CLT)
\begin{assumption}[{\bf Growth of \(\theta_N\)}]\label{assump:A1}
There exists $\xi > 4(B+1) \vee 8$ such that
\[
\sqrt{N}(\log N)^{\xi} \preceq \theta_N \preceq N
\]
for all large $N$.
\end{assumption}
Here $a_N \preceq b_N$ means $a_N = O(b_N)$ as $N\to\infty$. Intuitively, $
\theta_N$ grows at least on the order of $\sqrt{N}\,(\log N)^\xi$ (so that the perturbation is
sufficiently strong to create outliers), but at most on the order of $N$.

\begin{assumption}[{\bf Orthogonality of \(h_i\)}]\label{assump:A2}
    The collection ${h_1,\ldots,h_k}$ is an orthonormal set of bounded functions in $L^2[0,1]$. 
\end{assumption}
Equivalently, one can say the vectors $e_1,\ldots,e_k$
defined above are asymptotically orthonormal in $\mathbb{R}^N$ (indeed one can check $e_i' e_j =
\langle h_i, h_j\rangle_{L^2[0,1]} + o(1)$ as $N\to\infty$, which equals $\delta_{ij}$ by
orthonormality).

\begin{assumption}[{\bf Regularity of $h_i$}]\label{assump:A3} Each $h_i: [0,1]\to\mathbb R$ is Lipschitz continuous: there exists $L_i < \infty$ such that

\[
        |h_i(x) - h_i(y)| \leq L_i |x - y|, \quad \forall x, y \in [0,1].
    \]

\end{assumption}
This assumption will only be needed for some results where we require strong regularity, for example in analyzing the eigenvector as a function on $[0,1]$.\\

\medskip
\paragraph{\bf Notation:} We use the notation $\lambda_1(M)\ge \lambda_2(M)\ge \cdots \ge \lambda_N(M)$ for the ordered eigenvalues of an $N\times N$ symmetric matrix $M$. In particular, $\lambda_1(A_N)$ will denote the largest eigenvalue of $A_N$, $\lambda_2(A_N)$ the second largest, and so on. Similarly, for each $1\le i\le k$ we denote by $v_i$ a unit eigenvector of $A_N$ corresponding to $\lambda_i(A_N)$ (we will often call $v_i$ the $i$th eigenvector, understanding that it is determined up to a sign $\pm1$ which we will fix arbitrarily or via a convenient convention). Throughout, we focus on the outlier eigenvalues $\lambda_1(A_N),\ldots,\lambda_k(A_N)$ and their eigenvectors $v_1,\ldots,v_k$ and these are the $k$ eigenvalues that will be shown to escape the bulk of $W_N$’s spectrum due to the perturbation. We will use the following standard probabilistic notations to describe convergence as $N\to\infty$:
\begin{itemize}
 \item   A sequence of events $E_N$ is said to occur with high probability (w.h.p.) if
    \[
        \mathbb{P}(E_N^c) = O\left(e^{-(\log N)^\eta}\right), \quad \text{for some } \eta > 1.
    \]
   In words, the probability that $E_N$ fails to occur decays super-polynomially fast as $N$ grows.

 \item   For random variables (or random sequences) $Y_N$ and $Z_N$, we write
    \[
        Y_N = O_{hp}(Z_N)
    \]
    to mean that there exists a constant $C>0$ such that $|Y_N| \le C |Z_N|$ w.h.p. (for all large $N$).\item Similarly,
   \[
        Y_N = o_{hp}(Z_N)\]
    means that for every $\delta>0$, $|Y_N| \le \delta|Z_N|$ w.h.p. as $N\to\infty$. Intuitively, $O_{hp}$ and $o_{hp}$ are high-probability versions of the usual $O(\cdot)$ and $o(\cdot)$ notations.

\item    Similarly, \( Y_N = O_p(Z_N) \) means
    \[
        \lim_{x \to \infty} \sup_{N \geq 1} \mathbb{P}(|Y_N| > x |Z_N|) = 0,
    \]
    and \( Y_N = o_p(Z_N) \) means
    \[
        \lim_{N \to \infty} \mathbb{P}(|Y_N| > \delta |Z_N|) = 0, \quad \text{for all } \delta > 0.
    \]
Note that if \( Z_N \neq 0 \) a.s., then \( Y_N = O_p(Z_N) \) is equivalent to the stochastic tightness of \((Z_N^{-1}Y_N : N \geq 1)\), while \( Y_N = o_p(Z_N) \) implies \( Z_N^{-1}Y_N \xrightarrow{P} 0 \). Further, \( O_{\mathrm{hp}} \) and \( o_{\mathrm{hp}} \) are stronger than their \( O_p \) and \( o_p \) counterparts.
\end{itemize}
Throughout the paper, we omit the dependence on $N$ in notation when there is no risk of confusion. For example, we often write $W$ and $A$ instead of $W_N$ and $A_N$. All asymptotic notation ($O(\cdot), o(\cdot)$, etc.) is with respect to $N \to \infty$ unless specified otherwise.

\subsection{Main Results.} \label{subsec:mainresults}

We first summarize the first-order (law of large numbers) behavior of the outlier eigenvalues. Since the bulk of the eigenvalues of $A_N$ lie around the scale $\|W_N\| = O(\sqrt{N})$, the perturbation causes $k$ outliers at the larger scale of order $\theta_N$ (we drop the subscript $N$ from $\theta$ subsequently). Indeed, under Assumptions \ref{assump:A1}-\ref{assump:A2} we will show that the outliers separate from the bulk for large $N$. More precisely, for each fixed $i \in \{1,\ldots,k\}$, the $i$-th largest eigenvalue $\lambda_i(A)$ is approximately $\theta \alpha_i$. The deviation of $\lambda_i(A)$ from $\theta \alpha_i$ vanishes with high probability as $N\to\infty$. In particular, one obtains
\[
\lambda_i(A) = \theta \alpha_i \left(1 + o_{hp}(1)\right),
\]
for each $1\le i\le k$. This shows that $\lambda_i(A)$ is eventually bounded away from 0 w.h.p., so dividing by $\lambda_i(A)$ in asymptotic formulas will be justified. 

We now turn to the fluctuations of the outliers. Our first main theorem is a Central Limit Theorem (CLT) for the outlier eigenvalues, which describes their asymptotic distribution after proper centering. Let $\mathbb{E}[\lambda_i(A)]$ denote the expected value of $\lambda_i(A)$ (with respect to the randomness of $W$). We will show that $\mathbb{E}[\lambda_i(A)]$ is close to $\theta \alpha_i$ as $N$ grows (more precisely, an expansion is given in Theorem \ref{Th.EigenValueMean} below). The CLT is stated in two parts: first an expansion of $\lambda_i(A)$ around its mean, and then a description of the joint Gaussian law in the limit.
\begin{theorem}\label{Th.EigenValueCLT} ({\bf Central Limit Theorem for Eigenvalues}) Under Assumptions \eqref{assump:A1} and \eqref{assump:A2}, for all $i$ such that $1 \leq i \leq k$, we have the expansion
\[
\lambda_i(A) = \mathbb E \left(\lambda_i(A)\right) + \frac{\theta \alpha_i}{\lambda_i(A)} e_i^\prime We_i + o_p(1)\,.
\]

As a consequence, the vector of fluctuations of the top $k$ eigenvalues converges in distribution to a multivariate Gaussian:
\[
\bigg (\lambda_i(A) - \mathbb E(\lambda_i(A)) : 1 \leq i \leq k\bigg ) \Rightarrow (G_i : 1 \leq i \leq k)\,,
\]
where $(G_1,\ldots,G_k)$ is a centered Gaussian $\mathbb R^k$-valued random vector with covariance
\[
\mathrm{Cov}(G_i,G_j) = 2 \int_0^1 \int_0^1 h_i(x) h_i(y) h_j(x) h_j(y) f(x,y)\,dx\,dy \,,
\]
for all $1 \le i,j \le k$. In particular, for each fixed $1\le i\le k$, $\lambda_i(A)-\E[\lambda_i(A)]$ is asymptotically $\mathcal{N}( 0, \sigma_i^2 )$ with
$$\sigma_i^2=2 \int_0^1 \int_0^1 h_i(x)^2 f(x, y) h_i(y)^2 d x d y.$$

\end{theorem}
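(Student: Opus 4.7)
I would follow the standard resolvent-based approach to finite-rank perturbations. Write $A = W + \theta U D U'$ with $U = [e_1, \ldots, e_k]$ and $D = \mathrm{diag}(\alpha_1, \ldots, \alpha_k)$, and invoke Sylvester's identity: for $\lambda$ outside the spectrum of $W$,
\[
\det(\lambda I - A) = 0 \quad \Longleftrightarrow \quad \det\bigl(I_k - \theta\, U'(\lambda I - W)^{-1} U D\bigr) = 0.
\]
The first-order result $\lambda_i(A) = \theta \alpha_i(1 + o_{hp}(1))$, combined with $\|W\| = O_{hp}(\sqrt{N})$ and Assumption \ref{assump:A1}, gives $\lambda_i(A) \gg \|W\|$ w.h.p., so the resolvent admits the Neumann expansion $(\lambda I - W)^{-1} = \sum_{m \ge 0} \lambda^{-(m+1)} W^m$ at $\lambda = \lambda_i(A)$, converging geometrically with ratio at most $(\log N)^{-\xi}$.

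\textbf{Extracting the expansion.} Using the asymptotic orthonormality $U'U = I_k + o(1)$ from Assumption \ref{assump:A2} and substituting the Neumann series into the $k\times k$ master matrix $M(\lambda) := I_k - \theta\, U'(\lambda I - W)^{-1} U D$, I get
\[
M_{ii}(\lambda) = \Bigl(1 - \frac{\theta \alpha_i}{\lambda}\Bigr) - \frac{\theta \alpha_i}{\lambda^2}\, e_i' W e_i - \sum_{m \ge 2} \frac{\theta \alpha_i}{\lambda^{m+1}}\, e_i' W^m e_i + o_p(1/\theta),
\]
while $M_{ij}(\lambda) = O_p(1/\theta)$ for $i \neq j$ and $M_{jj}(\theta \alpha_i) \to (\alpha_i - \alpha_j)/\alpha_i \neq 0$ for $j \neq i$ since the $\alpha_i$ are distinct. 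Expanding $\det M(\lambda_i) = 0$ by cofactors along the $i$th row, the products of two or more off-diagonal entries contribute only $O_p(1/\theta^2)$, so the dominant balance forces $M_{ii}(\lambda_i) = O_p(1/\theta^2)$. Solving perturbatively yields
\[
\lambda_i = \theta \alpha_i + \frac{\theta \alpha_i}{\lambda_i}\, e_i' W e_i + \sum_{m \ge 2} \frac{\theta \alpha_i}{\lambda_i^{m}}\, e_i' W^m e_i + O_p(1/\theta).
\]
For $m \ge 2$, concentration bounds for polynomial forms in independent variables with sub-exponential tails (coming from \eqref{eq:momentcond}) give $\mathrm{Var}(e_i' W^m e_i) = O(N^{m-1})$ up to polylog factors, so the $m$th summand has standard deviation of order $N^{(m-1)/2}\theta^{1-m} \le (\log N)^{-(m-1)\xi} \to 0$ and is $o_p(1)$. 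Subtracting $\E[\lambda_i(A)]$ removes all deterministic pieces (in particular the potentially divergent $O(N/\theta)$ correction coming from $\E[e_i' W^2 e_i]$) and leaves
\[
\lambda_i(A) - \E[\lambda_i(A)] = \frac{\theta \alpha_i}{\lambda_i(A)}\, e_i' W e_i + o_p(1),
\]
which is the claimed expansion.

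\textbf{Joint CLT.} For the multivariate limit I would use the Cram\'er--Wold device. An arbitrary linear combination $\sum_{i=1}^k \gamma_i\, e_i' W e_i$ equals $\sum_{p \le q} c_{p,q}\, W(p,q)$ with coefficients $|c_{p,q}| = O(1/N)$, hence is a sum of independent centered random variables of uniformly bounded variance. The third-moment version of the Lyapunov condition follows immediately from \eqref{eq:momentcond}, and a direct Riemann-sum computation gives
\[
\mathrm{Var}\Bigl(\sum_i \gamma_i\, e_i' W e_i\Bigr) \longrightarrow \sum_{i,j} \gamma_i \gamma_j \cdot 2\int_0^1 \int_0^1 h_i(x) h_i(y) h_j(x) h_j(y) f(x,y)\, dx\, dy,
\]
matching the stated covariance. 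Combined with $\theta \alpha_i / \lambda_i(A) \to 1$ in probability (from the first-order result) and Slutsky, the multivariate Gaussian limit follows.

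\textbf{Main obstacle.} The principal technical hurdle is obtaining sharp concentration for the higher-order resolvent coefficients $e_i' W^m e_i$ (and the cross forms $e_i' W^m e_j$) uniformly in $m$: their means are polynomially large in $N$ and must be tracked as they accumulate into $\E[\lambda_i(A)]$, while their centered fluctuations must be pushed below scale $1$ by the $\theta^{-m}$ prefactors. This is precisely what dictates the minimum growth rate $\theta \succeq \sqrt{N}(\log N)^\xi$ in Assumption \ref{assump:A1}, and is where \eqref{eq:momentcond} and quadratic/polynomial-form concentration inequalities adapted to the inhomogeneous variance profile $f$ enter in an essential way.
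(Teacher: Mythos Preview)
Your approach is essentially identical to the paper's: both reduce the outlier equation to a $k\times k$ secular identity (your $\det M(\lambda)=0$ is, after conjugation by $D^{1/2}$, exactly $\det(\mu I_k - K)=0$ for the paper's matrix $K$), expand via the Neumann series, use concentration of the forms $e_i'W^m e_j$ for $m\ge 2$ to isolate $e_i'We_i$ as the sole order-one fluctuation, and finish with Lindeberg/Lyapunov plus Slutsky.

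One caveat worth flagging: your intermediate claim $M_{ij}(\lambda)=O_p(1/\theta)$ for $i\ne j$ is too strong under Assumption~\ref{assump:A2} alone. The leading deterministic piece of $M_{ij}$ is $-(\alpha_j/\alpha_i)e_i'e_j$, and without the Lipschitz hypothesis (Assumption~\ref{assump:A3}) one only has $e_i'e_j=o(1)$ with no rate; likewise the $m=2$ term contributes a deterministic $O(N/\theta^2)$, which can exceed $1/\theta$ at the low end of the allowed range for $\theta$. So the cofactor error in your displayed expansion for $\lambda_i$ is not $O_p(1/\theta)$ but rather ``deterministic $o(\theta)$ plus random $o_p(1)$''. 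This does not actually break your argument, since you correctly absorb all deterministic contributions into $\E[\lambda_i(A)]$ at the end. The paper handles this same point by working with the \emph{eigenvalue} $\bar\mu=\lambda_i(X)$ of an explicit deterministic $k\times k$ matrix $X$ (its Lemma~\ref{Le.S5}), which automatically captures the off-diagonal deterministic pieces, and then proving $\E[\mu]=\bar\mu+o(1)$ by a Cauchy--Schwarz splitting on the high-probability event and a crude polynomial moment bound on its complement. That last step---matching the expectation of the random eigenvalue to the explicit deterministic approximation within $o(1)$---is the one place your sketch leaves the work implicit.
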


\medskip
\begin{remark} In the expansion above, $\theta \alpha_i\lambda_i(A)^{-1} e_i' W e_i$ is the leading random term in $\lambda_i(A) - \E\lambda_i(A)$. Since $\lambda_i(A) \approx \theta \alpha_i$ for large $N$, this leading term is roughly $e_i' W e_i$ (up to a factor of order 1). Thus heuristically the fluctuation of $\lambda_i(A)$ is governed by the random quadratic form $e_i' W e_i$. These forms for different $i$ are correlated if the $h_i$ have overlapping support or $f(x,y)$ couples them, which results in the non-zero covariance $\Cov(G_i,G_j)$ when $i\neq j$. 
\end{remark}

The next result provides an asymptotic description of the centering $\mathbb{E}[\lambda_i(A)]$ of the outlier eigenvalues. In other words, it refines the first-order approximation $\mathbb{E}[\lambda_i(A)] \approx \theta \alpha_i$ by giving the leading correction term. Define a $k\times k$ deterministic matrix $B = B_N$ by
\begin{equation}\label{eq:Bmatrix}
B(j,l) = \sqrt{\alpha_j \alpha_{\ell}} \theta e_j^\prime e_{\ell} + \frac{\sqrt{\alpha_j \alpha_{\ell}} \mathbb E(e_j^\prime W^2 e_{\ell})}{\theta \alpha_i^2}\, \text{ for } 1\le j,\ell \le k.
\end{equation}

Intuitively, $B$ is like the “finite-rank part” of $\E(A)$ plus an $O(1/\theta_N)$ correction coming from the $W^2$ term.

For the next result a bit stronger assumption on $\theta_N$ than \ref{assump:A1} is needed.

\begin{theorem}\label{Th.EigenValueMean} ({\bf Asymptotic Behavior of Eigenvalue Mean}) Suppose the  Assumptions \ref{assump:A1} and \ref{assump:A2} hold.  Additionally, assume \begin{equation}\label{assump:A4}
N^{2/3} \ll \theta_N.
\end{equation} 
Then we have
\[
\mathbb E (\lambda_i(A)) = \lambda_i(B)\left(1 + O \left(\frac{N^2}{\theta^3}\right)\right)\,,
\]
where $B$ is as defined in \eqref{eq:Bmatrix} and $\lambda_i(B)$ is the $i$-th eigenvalue of $B$.
\end{theorem}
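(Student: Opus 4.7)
The natural starting point is the finite-rank perturbation identity. With $U = [e_1|\cdots|e_k]$ and $D = \diag(\alpha_1,\ldots,\alpha_k)$, the Schur-complement identity gives, for $\lambda \notin \sigma(W)$,
\[
\det(\lambda I - A) \;=\; \det(\lambda I - W)\cdot\det\bigl(I - \theta D U'(\lambda I - W)^{-1} U\bigr),
\]
so that the outlier eigenvalues of $A$ are exactly the $\lambda$ for which $1/\theta$ is an eigenvalue of the $k\times k$ matrix $D^{1/2} U'(\lambda I - W)^{-1} U D^{1/2}$. Under the moment hypothesis \eqref{eq:momentcond}, $\|W\| = O_{\mathrm{hp}}(\sqrt{N \log N})$, so for $\lambda \asymp \theta$ with $\theta \gg N^{2/3}$ the Neumann expansion $(\lambda I - W)^{-1} = \sum_{n\ge 0} \lambda^{-(n+1)} W^n$ converges in operator norm on a high-probability event.

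The crux is to truncate this series at $n=2$ and pass to expectation. Since $\mathbb E(U'WU) = 0$ and the $n=3$ mean contribution is strictly of lower order than the $n=4$ one (vanishing for symmetric entries, of size $O(N)$ in general, versus $O(N^2)$ for $\mathbb E(U'W^4U)_{jl}$), this yields
\[
\mathbb E\bigl[D^{1/2} U'(\lambda I - W)^{-1} U D^{1/2}\bigr] \;=\; \lambda^{-1} D^{1/2} U' U D^{1/2} \;+\; \lambda^{-3} D^{1/2}\,\mathbb E(U' W^2 U)\, D^{1/2} \;+\; O(\lambda^{-5} N^2).
\]
The deterministic outlier equation thus becomes $1/\theta \in \sigma(\lambda^{-1}\Phi + \lambda^{-3}\Psi)$ with $\Phi = D^{1/2} U'UD^{1/2}$ and $\Psi = D^{1/2}\,\mathbb E(U'W^2 U)\, D^{1/2}$. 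Multiplying through by $\theta\lambda$ and substituting the zeroth-order approximation $\lambda \approx \theta\alpha_i$ into the coefficient $\theta\lambda^{-2}\approx (\theta\alpha_i^2)^{-1}$ of $\Psi$ identifies the resulting matrix $\theta\Phi + (\theta\alpha_i^2)^{-1}\Psi$ with exactly the matrix $B$ from \eqref{eq:Bmatrix}. A standard eigenvalue-perturbation argument, exploiting the $\Theta(\theta)$ spectral gap between distinct outliers, then gives $\hat\lambda_i = \lambda_i(B)\bigl(1 + O(N^2/\theta^3)\bigr)$ for the deterministic root.

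To transfer this analysis to $\mathbb E[\lambda_i(A)]$, I would verify that the stochastic corrections contribute at most at the same order in mean. The first-order random term $\theta\alpha_i \lambda_i(A)^{-1} e_i'We_i$ from Theorem \ref{Th.EigenValueCLT} has mean zero; the second-order perturbative corrections, scaled by the $\Theta(\theta)$ eigenvalue gap, contribute only $O(1/\theta)$; fluctuations of $e_j'W^2 e_l$ around its mean are $O_p(\sqrt{N})$ by Hanson--Wright type estimates, producing a matrix perturbation of order $\sqrt{N}/\theta^3$ whose mean correction is absorbed by the stated error. Uniform integrability of $\lambda_i(A)$, immediate from the moment bound $\mathbb E\|W\|^{2p} = O(N^p)$ implied by \eqref{eq:momentcond}, controls contributions from the low-probability event on which the Neumann expansion fails.

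The principal obstacle is coordinating the deterministic truncation error and the various stochastic corrections at just the precision needed, and converting each high-probability estimate to a statement in expectation via uniform integrability. The strengthened growth assumption $N^{2/3}\ll\theta_N$ is tuned precisely so that the dominant error term, $\lambda^{-5}\mathbb E(U'W^4 U)$, yields $N^2/\theta^3 = o(1)$, which is what makes the conclusion nontrivial; weakening this assumption to the mere Assumption \ref{assump:A1} would leave the error term of the same order as (or larger than) the stated approximation and is the reason why the stronger hypothesis \eqref{assump:A4} is required here.
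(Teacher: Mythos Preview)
Your approach is essentially the one the paper takes: expand the $k\times k$ secular matrix via the Neumann series in $W/\lambda$, truncate after the quadratic term, identify the result with $B$, and bound the tail by $O(N^2/\theta^3)$ using $\|\mathbb{E}(U'W^nU)\|=O(N^{n/2})$. The paper's execution of the passage to expectation is slightly slicker than your uniform-integrability route: it invokes Theorem~\ref{Th.EigenValueCLT} to get $\mu-\mathbb{E}(\mu)=O_p(1)$, combines this with $\mu-\lambda_i(B)=O_p(1+N^2/\theta^3)$, and then observes that $\lambda_i(B)-\mathbb{E}(\mu)$ is deterministic, so $O_p$ automatically becomes $O$.

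One point you gloss over, which the paper handles by quietly invoking Assumption~\ref{assump:A3} (Lipschitz $h_i$, despite its not being listed among the hypotheses of the theorem): the substitution $\theta\lambda^{-2}\approx(\theta\alpha_i^2)^{-1}$ in the coefficient of $\Psi$ incurs an error of size $|\lambda-\theta\alpha_i|\cdot N/\theta^2$, and this is $O(N^2/\theta^3)$ only if $\lambda-\theta\alpha_i=O(N/\theta)$. Under Assumption~\ref{assump:A2} alone one has $e_j'e_l=\delta_{jl}+o(1)$ with no rate, hence $\lambda_i(Y_0)=\theta\alpha_i+o(\theta)$ with no rate, and the substitution is not controlled at the claimed precision. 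The paper uses Lipschitz continuity to get $e_j'e_l=\delta_{jl}+O(1/N)$, whence $\lambda_i(Y_0)=\theta\alpha_i+O(\theta/N)$, and a bootstrap then yields $\mu=\theta\alpha_i+O_p(N/\theta)$, which is exactly what is needed.
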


Since the proofs of Theorem \ref{Th.EigenValueCLT} and \ref{Th.EigenValueMean} are modifications of the proof from \cite{chakrabarty2020eigenvalues} we provide the details in the Appendix of this article.

We now summarize the main findings regarding the eigenvectors corresponding to the outlier eigenvalues. Fix $1 \le i \le k$, and let $v_i$ be the (unit-norm) eigenvector of $A$ associated with $\lambda_i(A)$. We are interested in the $N$-dimensional vector $v_i = (v_i(1), \ldots, v_i(N))'$ and, in particular, its projections onto the perturbation directions $e_1,\ldots,e_k$. Since ${e_1,\ldots,e_k}$ form the signal directions, an intuitive expectation is that $v_i$ should be mostly aligned with $e_i$ and approximately orthogonal to $e_j$ for $j\ne i$. The following results confirm this picture and quantify the fluctuations.

First, it turns out that $v_i$ is indeed asymptotically aligned with $e_i$ and asymptotically orthogonal to $e_j$ ($j\ne i$). In fact, one can show that
\[
e_i^{\prime} v_i=1+o_{hp}\left(\frac{\sqrt{N}}{\theta}\right), \quad e_j^{\prime} v_i=o_{hp}\left(\frac{\sqrt{N}}{\theta}\right) \quad(j \neq i)
\]

which means that $e_i$ captures the vast majority of the weight of $v_i$, up to a small fraction on the order of $(\sqrt{N}/\theta)$. Note that $\sqrt{N}/\theta \to 0$ by Assumption \ref{assump:A1}, so indeed $e_i'v_i \to 1$ and $e_j'v_i \to 0$ in probability.

Our next theorem describes the Gaussian fluctuations of the alignment of $v_i$ with $e_i$. In other words, we consider the inner product $e_i' v_i$ and show that it concentrates near $1$ and that its fluctuations around that value are Gaussian in the $N\to\infty$ limit. Moreover, for the inner products with non-matching directions ($e_j' v_i$ with $j\ne i$), we establish that they fluctuate around 0 and also follow an approximate Gaussian law. This can be viewed as a CLT for the coordinates of the eigenvectors in the basis of perturbation vectors ${e_i}$.

\begin{theorem}[{\bf Gaussian fluctuation of eigenvector alignment}]\label{Th.EigenVectorAlignment.CLT} Under Assumptions \ref{assump:A1}-\ref{assump:A3}, 
\begin{itemize}
    \item[(a)] for each $i\in{1,\ldots,k}$, we have
\[
\frac{\left(\theta \alpha_i\right)^2}{\sqrt{N}}\left(e_i^{\prime} v_i-\mathbb{E}\left[e_i^{\prime} v_i\right]\right) \xrightarrow{d} \mathcal{N}\left(0, \sigma^2\right)
\]

as $N\to\infty$, where
\[
\sigma_i^2=\frac{1}{2} \int_0^1 \int_0^1 h_i(x)^2 f(x, y) f(y, z) h_i(z)^2 d x d y d z
\]

\noindent
In particular, $e_i' v_i$ itself converges to 1 in probability, since $\E[e_i' v_i]\to 1$ and the fluctuations vanish as $N\to\infty$.  

\item[(b)] For any $j \ne i$,
\[
\theta \left(e_j^{\prime} v_i-\E[e_j^{\prime} v_i]\right) \xrightarrow{d} \mathcal{N}\left(0, \tau_{i j}^2\right), \]

for some $\tau_{ij}^2$ given by the following expression 
\begin{align*}
\tau_{ij}^2 = \frac{1}{(\alpha_i-\alpha_j)^2 \alpha_j}\int_{[0,1]^2} \left[h_j(x) h_i(y) + h_i(x) h_j(y)\right]^2 f(x,y)\, dx\, dy.
\end{align*}
 In particular, $e_j' v_i = o_p(1)$ for $j\ne i$, showing asymptotic orthogonality of $v_i$ and $e_j$.
\end{itemize}
\end{theorem}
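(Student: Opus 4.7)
The plan is to treat $W$ as a small perturbation of the rank-$k$ matrix $\theta EME'$, where $E=[e_1,\dots,e_k]$ and $M=\mathrm{diag}(\alpha_1,\dots,\alpha_k)$, since Assumption~\ref{assump:A1} ensures $\|W\|=O(\sqrt N)\ll\theta\alpha_i\approx\lambda_i$ with high probability. In particular, the resolvent $G(z)=(zI-W)^{-1}$ is analytic at $z=\lambda_i$, and the pair of identities
\[
  v_i=\theta\, G(\lambda_i)EM(E'v_i),\qquad \theta^2(E'v_i)'ME'G(\lambda_i)^2EM(E'v_i)=1
\]
(the first being $Av_i=\lambda_iv_i$ rewritten, the second being $\|v_i\|^2=1$), combined with the Neumann series $G(\lambda_i)=\sum_{m\ge0}\lambda_i^{-m-1}W^m$, should yield the first-order expansion
\[
  \tilde v_i \;=\; e_i+\sum_{j\ne i}\frac{e_j'We_i}{\theta(\alpha_i-\alpha_j)}\,e_j+\frac{1}{\theta\alpha_i}\Bigl(We_i-\sum_{\ell=1}^k(e_\ell'We_i)\,e_\ell\Bigr),
\]
with $v_i=\tilde v_i/\|\tilde v_i\|+r_i$ and a remainder $r_i$ controllable at scale $O_{hp}(N/\theta^2)$.

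For part (b), taking the inner product of this expansion with $e_j$ ($j\ne i$) and using $e_i'e_\ell=\delta_{i\ell}+o(1)$ immediately gives $e_j'v_i=(e_j'We_i)/(\theta(\alpha_i-\alpha_j))+o_p(1/\theta)$; the normalization factor $1/\|\tilde v_i\|=1-O_p(N/\theta^2)$ only perturbs the leading $O(1/\theta)$ term negligibly. Thus $\theta(e_j'v_i-\mathbb{E}[e_j'v_i])=(\alpha_i-\alpha_j)^{-1}(e_j'We_i)+o_p(1)$, and the claimed Gaussian limit reduces to a classical CLT for the linear quadratic form $e_j'We_i=\sum_{a\le b}(e_j(a)e_i(b)+e_j(b)e_i(a))X_{a,b}$, which is a weighted sum of independent centered variables; the Lindeberg condition follows from the moment bound \eqref{eq:momentcond} and the $L^\infty$ boundedness of $h_i,h_j$, while the variance converges by Riemann-sum approximation to $\tfrac12\iint(h_j(x)h_i(y)+h_i(x)h_j(y))^2 f(x,y)\,dx\,dy$.

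For part (a), first-order corrections are orthogonal to $e_i$ (a direct verification gives $e_i'\tilde v_i=1$ exactly, because the $e_i'We_i$ contribution from the third term cancels against $-\sum_\ell(e_\ell'We_i)(e_i'e_\ell)$), so I must read $e_i'v_i$ off the normalization: $e_i'v_i=1/\|\tilde v_i\|$. A direct computation, exploiting the further orthogonality of the two nontrivial pieces of $\tilde v_i-e_i$, gives
\[
  \|\tilde v_i\|^2-1 \;=\; \frac{1}{(\theta\alpha_i)^2}\Bigl(e_i'W^2e_i-\sum_{\ell}(e_\ell'We_i)^2\Bigr)+\sum_{j\ne i}\frac{(e_j'We_i)^2}{\theta^2(\alpha_i-\alpha_j)^2} \;=\; \frac{e_i'W^2e_i}{(\theta\alpha_i)^2}+O_p(1/\theta^2),
\]
whence $e_i'v_i=1-(e_i'W^2e_i)/(2(\theta\alpha_i)^2)+o_{hp}(\sqrt N/\theta^2)$. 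Centering and multiplying by $(\theta\alpha_i)^2/\sqrt N$ reduces the claim to the CLT $N^{-1/2}(e_i'W^2e_i-\mathbb{E}[e_i'W^2e_i])\Rightarrow\mathcal N(0,4\sigma_i^2)$, which is precisely the martingale CLT for quadratic forms in $W^2$ stated as Theorem~\ref{Th.MartingaleCLT}. The variance matches: writing $e_i'W^2e_i=\|We_i\|^2=\sum_c(\sum_a W_{ca}e_i(a))^2$ and using independence of rows up to the shared diagonal contributions gives $\mathrm{Var}(e_i'W^2e_i)\sim 2N\iiint h_i(x)^2h_i(z)^2 f(x,y)f(y,z)\,dx\,dy\,dz$, i.e.\ $4\sigma_i^2 N$.

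The main obstacle lies in part (a): several naive contributions to the resolvent expansion fluctuate on the scale $1/\theta$, far larger than the target $\sqrt N/\theta^2$. Notably the term $\lambda_i^{-1}(e_i'We_i)$ appears at order $1/\theta$ in both the secular equation determining $\lambda_i$ (Theorem~\ref{Th.EigenValueCLT}) and in $e_i'G(\lambda_i)^2e_i$; the delicate cancellation between these two sources is what makes the fluctuation only $O(\sqrt N/\theta^2)$, and it is already visible above as the orthogonality $e_i'(\tilde v_i-e_i)=0$ at first order. Verifying this cancellation rigorously, together with controlling all cross-terms involving $e_j$ ($j\ne i$) uniformly on a high-probability event, requires the sharp moment bounds on $e_i'W^me_j$ and the sup-norm delocalization estimates developed in Section~\ref{section:evseries}. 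Combined with the $W^2$ martingale CLT of Section~\ref{section:MartingaleCLT}, the Cram\'er--Wold device then upgrades the two marginal CLTs to the joint Gaussian convergence asserted in the theorem.
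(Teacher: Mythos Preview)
Your proposal is correct and follows essentially the same route as the paper. For part (b) both you and the paper reduce $\theta(e_j'v_i-\mathbb{E}[e_j'v_i])$ to the linear form $e_j'We_i$ (up to a deterministic factor) via the resolvent identity $v_i=\theta\sum_\ell\alpha_\ell(e_\ell'v_i)(\mu I-W)^{-1}e_\ell$ and then invoke a Lindeberg/Lyapunov CLT; for part (a) both identify the leading random contribution as $e_i'W^2e_i-\mathbb{E}[e_i'W^2e_i]$ after a first-order cancellation, and both defer to the martingale CLT of Theorem~\ref{Th.MartingaleCLT}.

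The only notable difference is in bookkeeping for part (a). You build an explicit first-order approximation $\tilde v_i$ and read off $e_i'v_i\approx 1/\|\tilde v_i\|$, exploiting the built-in orthogonality $e_i'(\tilde v_i-e_i)=0$. The paper instead works with $(e_i'v_i)^{-2}$ from the start via the exact identity \eqref{eq.master2}, expands $\theta^2\alpha_i^2 e_i'(\mu I-W)^{-2}e_i$ (Lemma~\ref{EVec.pivot1}), and then eliminates the $e_i'We_i$ term by substituting the secular equation for $\mu-\theta\alpha_i$ (Lemma~\ref{EVec.pivot2}, equations \eqref{EVec.pivot2.interm2}--\eqref{EVec.pivot2.interm5}); a final Taylor expansion of $x\mapsto x^{-1/2}$ (Lemma~\ref{EVec.pivot3}) converts $(e_i'v_i)^{-2}$ back to $e_i'v_i$. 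Your ``cancellation'' and the paper's ``substitute the secular equation'' are the same mechanism viewed from two angles. The paper's organization has the advantage that every error term is tracked through explicit resolvent powers $e_j'W^ne_\ell$ with $n\le L$, so the bounds of Lemmas~\ref{ExpectationBound}--\ref{ConcentrationBound} apply mechanically; your approach is more geometric but, as you correctly note, still needs those same estimates to justify that the remainder $r_i$ contributes $o_{hp}(\sqrt N/\theta^2)$ to $e_i'v_i$ after centering.
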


In the above result the main fluctuation in $e_i'v_i$ comes from certain quadratic forms of $W$ (specifically $e_i' W^2 e_i$); we prove a martingale CLT for those quadratic forms to deduce the above result. For $e_j'v_i$ with $j\ne i$, the leading contribution comes from a linear form $e_j' W e_i$, which is a sum of approximately $N^2$ independent terms and hence by the Lindeberg CLT is asymptotically normal of order $N$. The $\theta$ scaling above normalizes it to $O(1)$.

We now state an eigenvector delocalization result. Delocalization means that an eigenvector has no significant mass on any single coordinate. In our context, $v_i$ is largely supported on the direction $e_i$, but we want to ensure it is not too concentrated even within $e_i$. The following theorem shows that all coordinates of $v_i$ are $O_{hp}(1/\sqrt{N})$, up to some logarithmic corrections. In particular, the $\ell^\infty$ norm of $v_i$ goes to zero as $N\to\infty$. This is similar in spirit to the delocalization of eigenvectors in an unperturbed Wigner matrix (where typically $\|v_i\|_\infty = O(N^{-1/2})$), but here we must account for a slower decay due to the perturbation (hence the logarithmic factor appears).

\begin{theorem}[{\bf Eigenvector delocalization}]\label{Decolalization}  Under Assumptions \ref{assump:A1}-\ref{assump:A3}, for each $1\le i\le k$, as $N\to\infty$,
\begin{align*}
\|v_i - e_i\|_{\infty} = O_{hp}\left(\frac{(\log N)^{\xi/4}}{\theta}\right) = o_{hp}\left(\frac{1}{\sqrt{N}}\right).
\end{align*}
\end{theorem}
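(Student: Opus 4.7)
My plan is to begin from the eigenvalue equation $A v_i=\lambda_i v_i$, fix the sign of $v_i$ so that $e_i'v_i>0$ (consistent with the alignment results recorded just before Theorem~\ref{Th.EigenVectorAlignment.CLT}), rewrite the equation as $(\lambda_i I - W)\,v_i = \theta\sum_{j=1}^{k}\alpha_j(e_j'v_i)\,e_j$, and expand the resolvent as a Neumann series. By Weyl's inequality applied to $A = W + \theta\sum_j \alpha_j e_j e_j'$ one has $|\lambda_i - \theta\alpha_i| = O_{hp}(\sqrt N)$, and the standard spectral bound $\|W\|_{op} = O_{hp}(\sqrt N)$ together with Assumption~\ref{assump:A1} give $\|W\|_{op}/\lambda_i = O_{hp}(\sqrt N/\theta) \ll 1$. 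Hence $(\lambda_i I - W)^{-1} = \sum_{n\ge 0}\lambda_i^{-(n+1)}W^n$ converges with high probability and, isolating the $(n=0,\,j=i)$ summand, yields
\begin{equation*}
v_i - e_i \;=\; \Bigl(\tfrac{\theta\alpha_i\, e_i'v_i}{\lambda_i}-1\Bigr)e_i \;+\; \tfrac{\theta}{\lambda_i}\sum_{j\ne i}\alpha_j(e_j'v_i)\,e_j \;+\; \sum_{n\ge 1}\lambda_i^{-(n+1)}\theta\sum_{j=1}^{k}\alpha_j(e_j'v_i)\,W^n e_j.
\end{equation*}
I would bound each of the three terms separately in the $\ell^\infty$ norm.

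For the first two, scalar-coefficient, terms I would use the first-order alignment $e_i'v_i = 1 + o_{hp}(\sqrt N/\theta)$ and $e_j'v_i = o_{hp}(\sqrt N/\theta)$ for $j\ne i$, combined with the Weyl bound above. These imply $|\theta\alpha_i e_i'v_i/\lambda_i - 1| = O_{hp}(\sqrt N/\theta)$. Since each $h_j$ is bounded (Assumption~\ref{assump:A3} in particular implies this), $\|e_j\|_\infty \le \|h_j\|_\infty/\sqrt N = O(1/\sqrt N)$, so both the first and the second term contribute $O_{hp}(1/\theta)$ in sup norm, well inside the target bound.

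The substantive task is the tail sum $S := \sum_{n\ge 1}\lambda_i^{-(n+1)}\theta\sum_j\alpha_j(e_j'v_i)\,W^n e_j$. I would use the elementary bound $\|W^n e_j\|_\infty \le \|W\|_{op}^{n-1}\|We_j\|_\infty$, which reduces the problem to the sup-norm estimate
\begin{equation*}
\|W e_j\|_\infty \;=\; O_{hp}\!\bigl((\log N)^{B+1}\bigr).
\end{equation*}
This follows because each coordinate $(We_j)(\ell) = \sum_m W_{\ell m}\,h_j(m/N)/\sqrt N$ is a sum of independent mean-zero random variables with uniformly bounded variance $\sum_m f(\ell/N,m/N)h_j(m/N)^2/N = O(1)$ and sub-exponential tails inherited from the moment condition~\eqref{eq:momentcond}; a Bernstein/Bennett-type concentration inequality together with a union bound over $\ell=1,\dots,N$ produces the stated polylogarithmic tail. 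Combined with $\|W\|_{op}\le C\sqrt N$ w.h.p., $|e_j'v_i|= O_{hp}(1)$, and $\lambda_i^{-(n+1)}\theta \le C(\theta\alpha_i)^{-n}$, the $n$-th summand of $S$ is bounded in sup norm by $C(\log N)^{B+1}(\sqrt N/\theta)^{n-1}/\theta$. Summing the geometric series (convergent since $\sqrt N/\theta \ll 1$) gives $\|S\|_\infty = O_{hp}((\log N)^{B+1}/\theta)$.

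Collecting the three estimates yields $\|v_i - e_i\|_\infty = O_{hp}((\log N)^{B+1}/\theta) \le O_{hp}((\log N)^{\xi/4}/\theta)$, where the last inequality is Assumption~\ref{assump:A1}'s choice $\xi/4 > B+1$, and the final $o_{hp}(1/\sqrt N)$ conclusion is immediate from $\theta \succeq \sqrt N(\log N)^{\xi}$. The hardest step is really the uniform sup-norm estimate on $We_j$: while $\ell^2$ control is elementary, genuinely polylogarithmic sup-norm control across all $N$ coordinates depends crucially on the sub-exponential moment condition~\eqref{eq:momentcond} and on a careful union bound. The threshold $\xi > 4(B+1)\vee 8$ in Assumption~\ref{assump:A1} is precisely what is needed to absorb the $(\log N)^{B+1}$ factor while still beating $1/\sqrt N$.
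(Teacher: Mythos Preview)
Your overall strategy---expand $v_i$ via the resolvent identity, peel off the $n=0$ term, and control the tail---matches the paper's. The first two, scalar-coefficient, pieces are handled correctly. The gap is in the tail sum $S$.

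The inequality you invoke,
\[
\|W^n e_j\|_\infty \;\le\; \|W\|_{op}^{\,n-1}\,\|We_j\|_\infty,
\]
is false: $\|W\|_{op}$ is the $\ell^2\!\to\!\ell^2$ operator norm, and it does \emph{not} control $\|\cdot\|_\infty\!\to\!\|\cdot\|_\infty$. For a Wigner-type matrix one has $\|W\|_{\infty\to\infty}=\max_i\sum_j|W_{ij}|\asymp N$, not $\sqrt N$, so the submultiplicative bound with $\|W\|_{op}$ is unavailable. The only generic inequality at hand is $\|W^n e_j\|_\infty\le\|W^n e_j\|_2\le\|W\|_{op}^{\,n}\,\|e_j\|_2=O_{hp}(N^{n/2})$, and with that your tail sum becomes
\[
\|S\|_\infty \;\le\; C\sum_{n\ge 1}\theta^{-n}(C\sqrt N)^n \;=\; O_{hp}\!\bigl(\sqrt N/\theta\bigr).
\]
Under Assumption~\ref{assump:A1} this is only $o_{hp}(1)$, not $o_{hp}(1/\sqrt N)$; the latter would require $\theta\gg N$, which the assumption does not grant.

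What the paper does instead is prove a coordinatewise moment bound for the \emph{full} power: for each fixed coordinate $\ell$ and each $1\le n\le L=\lfloor\log N\rfloor$,
\[
\mathbb E\bigl|(W^ne_j)(\ell)\bigr|^p \;\le\; (Hnp)^{np}\,N^{(n-1)p/2},
\]
following the combinatorics of Lemma~7.10 in \cite{erdHos2013spectral} (this is the paper's Lemma~\ref{Deloc.est}). Markov's inequality with $p\sim(\log N)^\eta$ then yields $|(W^n e_j)(\ell)|=O_{hp}\bigl(N^{(n-1)/2}(\log N)^{n\xi/4}\bigr)$ with super-polynomial failure probability, so the union bound over $\ell=1,\dots,N$ and $n\le L$ costs nothing. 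Summing $\lambda_i^{-(n+1)}\theta\cdot N^{(n-1)/2}(\log N)^{n\xi/4}$ over $n\ge1$ gives the claimed $O_{hp}\bigl((\log N)^{\xi/4}/\theta\bigr)$. The point is that the savings of one factor $\sqrt N$ (from $N^{n/2}$ down to $N^{(n-1)/2}$) cannot be obtained by splitting off a single factor of $W$ and bounding the rest by operator norm; one has to exploit cancellation in the full product $\prod_{r}W(i_r,i_{r+1})$ via a moment computation for every $n$.
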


Thus every coordinate $v_i(\ell)$ of the eigenvector satisfies $|v_i(\ell)| = O_{hp}(N^{-1/2})$, and $\max_{1\le \ell\le N}|v_i(\ell)| = o_{hp}(N^{-1/2})$. This shows that $v_i$ is delocalized across of order $N$ coordinates.

We now turn to a more functional perspective by considering each eigenvector $v_i$ as a
function on $[0,1]$. For $1 \le i \le k$, define the piecewise-constant function
$V_N^{(i)} : [0,1] \to \mathbb{R}$ by
\begin{equation}\label{eq:Processfn}
  V_N^{(i)}(t) =
  \begin{cases}
    \sqrt{N}\, v_i(\ell), & \text{if } \frac{\ell-1}{N} < t \le \frac{\ell}{N},\quad \ell = 1,\dots,N,\\[1ex]
    0, & t = 0.
  \end{cases}
\end{equation}
 This is a convenient way to interpret $v_i$ as a random element of $L^2[0,1]$. Since $v_i$ is a unit vector, $V_N^{(i)}$ is an $L^2$-normalized function (indeed, $\int_0^1 V_N^{(i)}(t)^2 dt = \sum_{\ell=1}^N v_i(\ell)^2=1$).  For large $N$, we expect $V_N^{(i)}(t)$ to concentrate near the function $h_i(t)$.

For $1 \le i \le k$ and $g \in C_0^\infty(0,1)$, we set
\begin{equation}\label{EVecproAction}
  \langle \mathcal V_N^{(i)}, g\rangle
  := \theta_N \int_0^1 \big( V_N^{(i)}(t) - \mathbb{E}[V_N^{(i)}(t)] \big) g(t)\,dt.
\end{equation}

 The scaling $\theta_N $ is chosen so that $\langle \mathcal V_N^{(i)}, g \rangle$ converges to a non-degenerate limit; it comes from the fact that $V_N^{(i)} - \E[V_N^{(i)}$ is of order $1/\theta_N$ in the $h_i$-direction (from Theorem \ref{Th.EigenVectorAlignment.CLT}). We will consider $\mathcal V_N^{(i)}$ as an element of the dual of the Sobolev space $H^1_0[0,1]$ (the space of $C_0^\infty$ functions with norm given by the Dirichlet energy, see below), which is a subspace of distributions on $[0,1]$. We call $\mathcal V_N^{(i)}$ the eigenvector process (fluctuation field) associated to the top eigenvector.

To describe the scaling limit of $\mathcal V_N^{(i)}$, it is useful to recall some facts about Sobolev spaces on $[0,1]$. Let $H^1_0[0,1]$ be the Sobolev space of (equivalence classes of) functions on $[0,1]$ that vanish at the boundary and have square-integrable first derivative. This is a Hilbert space with inner product $$\langle g_1,g_2\rangle_{H^1} = \int_0^1 g_1'(x) g_2'(x),dx.$$ Let $\{\phi_\ell(x)\}_{\ell\ge1}$ be an orthonormal basis of $L^2[0,1]$ consisting of eigenfunctions of the Dirichlet Laplacian $-d^2/dx^2$ on $[0,1]$. We have $\phi_\ell(0)=\phi_\ell(1)=0$, and the corresponding eigenvalues $0<\lambda_1 < \lambda_2 < \cdots$ tend to infinity as $\ell\to\infty$. It is well known that $\{\phi_\ell\}_{\ell\ge 1}$ is also an orthogonal basis for $H^1_0[0,1]$, with $\|\phi_\ell\|_{H^1}^2 = \lambda_{\ell}$. Moreover, $\{\lambda_\ell^{-1/2}\phi_\ell\}$ forms an orthonormal basis of $H^1_0[0,1]$. The dual space $H^{-1}[0,1]$ is defined to be the space of continuous linear functionals on $H^1_0[0,1]$ or, equivalently, the completion of $L^2[0,1]$ under the norm $\|h\|_{H^{-1}} = \sup_{|g|_{H^1}\le1} \langle h,\, g\rangle$. It can be characterized in terms of the basis $\{\phi_{\ell}\}_{\ell\ge 1}$: any $h\in H^{-1}$ can be expanded as $h = \sum_{\ell \ge 1} a_\ell \phi_\ell$ in the sense of distributions, and
\[
\|h\|_{H^{-1}}^2=\sum_{\ell \geq 1} \frac{a_{\ell}^2}{\lambda_{\ell}}.
\]

More generally, for any $c>0$, one can define $H^c_0[0,1]$ as the closure of $C_0^\infty(0,1)$ under the norm $\|g\|_{H^c}^2 := \sum_{\ell\ge1} a_\ell^2 \lambda_\ell^c$ (where $g = \sum a_\ell \phi_\ell$); its dual $H^{-c}$ has norm $\|h\|_{H^{-c}}^2 = \sum a_\ell^2 \lambda_{\ell}^{-c}$. Intuitively, larger $c$ means a smaller dual space (because $\lambda_\ell^{-c}$ decays faster), so convergence in $H^{-c}$ is a stronger mode of convergence for larger $c$.

Using this framework, we can now state the convergence of the eigenvector process $\mathcal V_N^{(i)}$. Roughly, $\mathcal V_N^{(i)}$ converges to a limit $\mathcal V^{(i)}$ in $H^{-d}$ for any $d>1/2$. The limit $\mathcal V^{(i)}$ is a random element of $H^{-d}$ which is Gaussian (a centered Gaussian field). In fact, $\mathcal V^{(i)}$ can be viewed as a variant of Gaussian white noise on $(0,1)$ with Dirichlet boundary conditions (zero average), as we explain after the theorem.

\begin{theorem}({\bf Functional CLT for eigenvector process})\label{EVecProMain} Fix $i \in \{1,\dots,k\}$ and let $\mathcal V_N^{(i)}$ be the eigenvector fluctuation field defined in \eqref{EVecproAction}. Under Assumptions \ref{assump:A1}, \ref{assump:A2} and \ref{assump:A3}, the sequence ${\mathcal V_N^{(i)}}$ is tight in $H^{-d}[0,1]$ for every $d > \frac12$. Moreover, $\mathcal V_N^{(i)}$ converges weakly to a random element $\mathcal V^{(i)}$ in $H^{-d}[0,1]$ (for any $d>1/2$). The limit $\mathcal V^{(i)}$ is a mean-zero Gaussian random distribution on $(0,1)$ characterized by its covariance: for any test functions $g_1, g_2 \in C_0^\infty(0,1)$,
\begin{equation}\label{Eq:covariance}
\mathbb{E}\left[\left\langle \mathcal V^{(i)}, g_1\right\rangle\left\langle \mathcal V^{(i)}, g_2\right\rangle\right]= C_k^{(i)}(g_1, g_2),
\end{equation}
where $C_k^{(i)}(g_1, g_2)$ is an explicit function given in terms $\{h_j, f, \alpha_j\}_{1\le j\le k}$ in \eqref{covarianceform}. 
\end{theorem}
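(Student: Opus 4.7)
The plan is to establish the functional CLT in two complementary steps: (i) finite-dimensional convergence of $\langle \mathcal V_N^{(i)}, g\rangle$ for every $g\in C_0^\infty(0,1)$, and (ii) tightness of $\{\mathcal V_N^{(i)}\}$ in $H^{-d}[0,1]$ for every $d>1/2$. Together these yield weak convergence in $H^{-d}$ to a centered Gaussian random distribution whose covariance is read off from the limiting variance of step (i). The backbone of both steps is a linearization of the eigenvector fluctuation in terms of linear forms in the entries of $W$, obtained from the eigenvalue identity and a Neumann-series expansion of the resolvent of $W$.

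For finite-dimensional convergence, let $u_g\in\mathbb R^N$ denote the discretization $u_g(\ell)=g(\ell/N)/\sqrt N$. The Lipschitz regularity from Assumption \ref{assump:A3} together with a Riemann-sum estimate gives $\int_0^1 V_N^{(i)}(t)g(t)\,dt = u_g'v_i + O(N^{-1})$, so it suffices to analyze $\theta_N(u_g'v_i - \mathbb E[u_g'v_i])$. I would decompose $g = \sum_{j=1}^k \langle g,h_j\rangle h_j + g^\perp$ with $g^\perp$ orthogonal to $\{h_1,\dots,h_k\}$ in $L^2[0,1]$; correspondingly $u_g = \sum_j \langle g,h_j\rangle e_j + u_{g^\perp} + o(1)$ in $\ell^2$. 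Then
\[
\theta_N(u_g'v_i - \mathbb E[u_g'v_i]) = \sum_{j=1}^k \langle g,h_j\rangle\,\theta_N(e_j'v_i - \mathbb E[e_j'v_i]) + \theta_N(u_{g^\perp}'v_i - \mathbb E[u_{g^\perp}'v_i]).
\]
The $j=i$ summand is $O_p(\sqrt N/\theta_N)=o_p(1)$ by Theorem \ref{Th.EigenVectorAlignment.CLT}(a) and Assumption \ref{assump:A1}, while the $k-1$ summands with $j\neq i$ jointly converge to a centered Gaussian vector by Theorem \ref{Th.EigenVectorAlignment.CLT}(b). For the perpendicular term, I would invoke the master identity $v_i=(\lambda_i I - W)^{-1}\theta_N\sum_j \alpha_j(e_j'v_i)e_j$ and expand the resolvent as a Neumann series $\sum_{n\ge 0}\lambda_i^{-(n+1)}W^n$, which converges w.h.p.\ since $\|W\|=O_{hp}(\sqrt N)$ while $\lambda_i\asymp\theta_N\gg\sqrt N(\log N)^\xi$. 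Using $e_i'v_i=1+o_{hp}(1)$, $e_j'v_i=o_{hp}(1)$ for $j\neq i$, and the asymptotic orthogonality of $u_{g^\perp}$ to every $e_j$, one isolates the leading contribution
\[
\theta_N(u_{g^\perp}'v_i - \mathbb E[u_{g^\perp}'v_i]) = \frac{1}{\alpha_i}\,u_{g^\perp}'We_i + o_p(1),
\]
which is a linear form in the independent entries of $W$ and therefore asymptotically Gaussian by Lindeberg's CLT, with limiting variance $\tfrac{1}{2\alpha_i^2}\int\!\int [g^\perp(x)h_i(y)+g^\perp(y)h_i(x)]^2 f(x,y)\,dx\,dy$. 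Joint convergence for finitely many test functions then follows from Cram\'er--Wold, since every component is a linear form in the entries of $W$.

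For tightness in $H^{-d}$, the Dirichlet basis $\{\phi_\ell\}$ satisfies $\lambda_\ell\asymp \ell^2$ and $\|\phi_\ell\|_\infty\le\sqrt 2$, so $\|\mathcal V_N^{(i)}\|_{H^{-d}}^2=\sum_\ell \lambda_\ell^{-d}|\langle\mathcal V_N^{(i)},\phi_\ell\rangle|^2$. The variance analysis of the preceding step applied to $g=\phi_\ell$ depends on $\phi_\ell$ only through a bounded symmetric kernel in $[0,1]^2$, which delivers the uniform bound $\sup_{N,\ell}\mathbb E[|\langle\mathcal V_N^{(i)},\phi_\ell\rangle|^2]\le C$. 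Consequently $\sup_N \mathbb E\|\mathcal V_N^{(i)}\|_{H^{-d}}^2\le C\sum_\ell\lambda_\ell^{-d}<\infty$ for $d>1/2$, and Chebyshev yields the tail bound $\sup_N\mathbb P(\|\mathcal V_N^{(i)}-P_L\mathcal V_N^{(i)}\|_{H^{-d}}>\delta)\to 0$ as $L\to\infty$, where $P_L$ is the projection onto the first $L$ Dirichlet modes. The standard Hilbert-space tightness criterion, combined with the finite-dimensional convergence, then produces weak convergence in $H^{-d}$ for every $d>1/2$.

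The main technical obstacle is controlling the Neumann-series remainder $\sum_{n\ge 2}\lambda_i^{-(n+1)}u'W^ne_i$ uniformly over a family of test vectors $u=u_\phi$ with $\|\phi\|_\infty$ bounded, and showing it is $o_p(\theta_N^{-1})$; this requires sharp high-moment bounds on $u'W^n e_i$ and concentration for $\|W^n e_i\|$ in the inhomogeneous variance-profile setting, supplied by the series estimates of Section \ref{section:evseries} together with the moment hypothesis \eqref{eq:momentcond}. Once the linearization is rigorous, the limiting covariance $C_k^{(i)}(g_1,g_2)$ is assembled by bilinearization as the sum of (i) the aligned contribution $\sum_{j\ne i}\langle g_1,h_j\rangle\langle g_2,h_j\rangle\,\tau_{ij}^2$ from Theorem \ref{Th.EigenVectorAlignment.CLT}(b), (ii) the perpendicular contribution $\tfrac{1}{2\alpha_i^2}\int\!\int[g_1^\perp(x)h_i(y)+g_1^\perp(y)h_i(x)][g_2^\perp(x)h_i(y)+g_2^\perp(y)h_i(x)]f(x,y)\,dx\,dy$, and (iii) the cross-covariances arising from the fact that all of these fluctuations are linear functionals of the same matrix $W$.
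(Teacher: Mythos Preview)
Your outline is correct and follows essentially the same two–step architecture as the paper: (i) reduce $\langle\mathcal V_N^{(i)},g\rangle$ to an explicit linear form in the entries of $W$ via the resolvent identity and a Neumann expansion, then apply a Lindeberg/Lyapunov CLT and Cram\'er--Wold; (ii) tightness in $H^{-d}$ via the uniform bound $\sup_{N,\ell}\mathbb E\langle\mathcal V_N^{(i)},\phi_\ell\rangle^2<\infty$ together with $\sum_\ell\lambda_\ell^{-d}<\infty$ for $d>1/2$. The paper's Lemma \ref{EVecproPreTight} is exactly your uniform second-moment bound, and the paper packages tightness through the compact embedding $H^{-d'}\hookrightarrow H^{-d}$ (Rellich) rather than your tail-of-spectral-projection criterion, but these are equivalent.

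The one organizational difference is worth noting. The paper does \emph{not} decompose $g$ along $\{h_j\}$; it expands $\underline g'v_i$ directly from the identity $v_i=\theta(\mu I-W)^{-1}\sum_l\alpha_l(e_l'v_i)e_l$, and after carefully replacing $\mu$ by $\bar\mu$ and $e_l'v_i$ by their means it arrives at the decomposition \eqref{eq:gprimev-RDE}: the random part is $\frac{1}{\alpha_i}u_g'We_i - \frac{\langle g,h_i\rangle}{\alpha_i^2}e_i'We_i$ plus the cross pieces $\propto\langle g,h_l\rangle e_l'We_i$. Your route instead recycles Theorem \ref{Th.EigenVectorAlignment.CLT} for the $h_j$-components and does a fresh resolvent analysis only for $g^\perp$; this is slightly more economical, and your claim $\theta(u_{g^\perp}'v_i-\mathbb E[\cdot])=\alpha_i^{-1}u_{g^\perp}'We_i+o_p(1)$ is correct (it is the paper's formula specialized to a test function orthogonal to every $h_l$, so the $e_i'We_i$ and cross contributions vanish). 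Two small caveats: first, Theorem \ref{Th.EigenVectorAlignment.CLT}(b) gives only marginal limits, so for Cram\'er--Wold you must go back to the linear-form representation \eqref{Evec.Ortho.final}, which is exactly what the paper's direct expansion produces; second, the discrete orthogonality $u_{g^\perp}'e_j$ is only $O(1/N)$, not zero, and it is precisely Assumption \ref{assump:A3} (Lipschitz $h_j$) that keeps the resulting error at $O_p(1/N)$ after multiplying by $\theta$, just as in the paper's passage between $\underline g$ and the continuum pairing.
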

We shall provide an expression for the general case but the case $i=1$ and $k=1$ seem to be more tractable and for $k\ge 1$, the expression is much more complicated.
Theorem \ref{EVecProMain} states that the fluctuations of the eigenvector $v_1$ (viewed as a function on $[0,1]$) converge to a Gaussian random distribution whose law is that of a mean-zero Gaussian process with covariance given by an explicit function. This process can be viewed as transformation of the white noise. As a sanity check, one can verify that in the simplest setting (say $k=1$ and a flat variance profile $f(x,y)\equiv 1$), the previous results imply that $V_N^{(1)}(t) \approx h_1(t)$ and $\mathcal V_N^{(1)}$ converges to a white noise with unit intensity on $(0,1)$, projected to the subspace of zero-mean distributions. Indeed, in that case $h_1(t)$ is constant since $e_1$ is proportional to the all-1’s vector, $e_1'v_1$ fluctuates like Gaussian random variable around its mean (Theorem \ref{Th.EigenVectorAlignment.CLT}) with $$\int h_1^2(x) f(x,y) f(y,z) h_1^2(z)dxdy dz = 1.$$
 Putting these together, one recovers that $\langle \mathcal V_N^{(1)}, g\rangle$ converges to $\mathcal{N}(0, C_1(g)^2)$, where $C_1(g)$ is given as below:
 \[
 C_1(g)^2=\int_0^1 g^2(x)\, dx - \left(\int_0^1 g(x)\,dx\right)^2.
\]

\begin{remark}[Identification of the Limit in the Homogeneous Case]
In the special case where \( f \equiv 1 \) and \( h \equiv 1 \), the limiting fluctuation field \( \mathcal{V}^{(1)}(t) \) admits a concrete representation as projected white noise:
\[
\mathcal{V}^{(1)}(t) = \xi(t) - \int_0^1 \xi(x)\,dx,
\]
where \( \xi(t) \) denotes standard white noise on \( [0,1] \). This is the projection of white noise onto the orthogonal complement of the constant function \( \mathbf{1} \in L^2[0,1] \). In other words, \( \mathcal{V}(t) \) is the unique generalized process satisfying
\[
\langle \mathcal{V}^{(1)}, g \rangle = \langle \xi, g \rangle - \left( \int_0^1 g(x)\,dx \right) \left( \int_0^1 \xi(x)\,dx \right), \quad \text{for all } g \in L^2[0,1].
\]

The covariance kernel of \( \mathcal{V}^{(1)} \) is given by
\[
\mathbb{E}[\mathcal{V}^{(1)}(t)\mathcal{V}^{(1)}(s)] = \delta(t-s) - 1,
\]
which reflects the fact that the total mass \( \int_0^1 \mathcal{V}^{(1)}(t)\,dt \equiv 0 \) almost surely. In particular, for any test function \( g \in L^2[0,1] \), the variance of the linear functional \( \langle \mathcal{V}^{(1)}, g \rangle \) is
\[
\mathrm{Var}(\langle \mathcal{V}^{(1)}, g \rangle) = \int_0^1 g^2(t)\,dt - \left( \int_0^1 g(t)\,dt \right)^2.
\]
 This shows that the eigenvector fluctuation field converges to white noise with the constant mode removed.
\end{remark}

The proof of Theorem \ref{EVecProMain} leverages known results about the convergence of random fields to Gaussian Free Field-like distributions. See for example \cite{camia2015planar} and \cite{sheffield2007gaussian} for related techniques.

\section{Eigenvalue Fluctuations – Proofs of Main Results}\label{section:eigenvalue fluctuations}

In this section, we present the proofs of the results concerning the outlier eigenvalues of $A_N$. We will repeatedly use the assumptions and notation introduced earlier. First, we establish some useful estimates about the base random matrix $W_N$. These estimates will be used to control error terms and justify approximations in the proofs of the theorems. The proofs of these lemmas are technical but not too complicated; we defer them to the Appendix. Throughout this section, we assume that Assumptions \ref{assump:A1} and \ref{assump:A2} hold. 

\begin{lemma}({\bf Norm bound for $W$})\label{lemma:NormBound} Let $W$ be the Wigner-type matrix defined in \eqref{eq:WignerTypeMatrix} satisfying \eqref{eq:momentcond} and with variance profile $f(x,y)$. Then \[
\|W\| = O_{hp}(\sqrt{N})\,.
\]
\end{lemma}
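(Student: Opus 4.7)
The plan is to prove this by the classical moment (trace) method, preceded by a truncation step that is needed because the entries are only sub-exponential rather than bounded. The moment hypothesis $\mathbb{E}[|X_{j,\ell}|^m] \le m^{Bm}$, applied with $m = \lceil (\log N)^2\rceil$, together with Markov's inequality, shows that setting $T_N := (\log N)^{2B+1}$ one has $\mathbb{P}(|X_{j,\ell}| > T_N) \le e^{-c(\log N)^2}$, and a union bound over the $\binom{N+1}{2}$ independent entries gives $\mathbb{P}(\exists (j,\ell):|X_{j,\ell}| > T_N) \le e^{-(\log N)^\eta}$ for some $\eta > 1$. Let $\tilde X_{j,\ell} = X_{j,\ell}\mathbf{1}\{|X_{j,\ell}| \le T_N\} - \mathbb{E}[X_{j,\ell}\mathbf{1}\{|X_{j,\ell}|\le T_N\}]$ and $\tilde W$ the corresponding symmetric matrix. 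On the high-probability event above, $W - \tilde W$ is a matrix of tiny deterministic centering shifts of norm $O(N \cdot T_N^{-1}\mathbb{E}|X|^2) = o(\sqrt{N})$, so it suffices to prove $\|\tilde W\| = O_{hp}(\sqrt N)$.

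For $\tilde W$ I would use $\|\tilde W\|^{2k} \le \mathrm{tr}(\tilde W^{2k})$ for a positive integer $k = k_N$ to be chosen, and expand
\[
\mathbb{E}\bigl[\mathrm{tr}(\tilde W^{2k})\bigr] \;=\; \sum_{i_1,\ldots,i_{2k}} \mathbb{E}\bigl[\tilde W_{i_1 i_2}\tilde W_{i_2 i_3}\cdots \tilde W_{i_{2k} i_1}\bigr].
\]
By independence and zero mean, only closed walks in which every edge is traversed at least twice survive. Splitting the sum according to the shape of the walk and using (i) $\mathbb{E}[\tilde W_{i,j}^2] \le f(i/N,j/N) \le \|f\|_\infty$ and (ii) $|\tilde W_{i,j}| \le 2T_N$, the standard Füredi–Komlós enumeration yields a bound of the form
\[
\mathbb{E}\bigl[\mathrm{tr}(\tilde W^{2k})\bigr] \;\le\; C_k\,\|f\|_\infty^{\,k}\,N^{k+1} \;+\; k^{C_1 k}\, T_N^{\,C_1 k}\, N^{k},
\]
where $C_k\le 4^k$ is the $k$-th Catalan number (from tree-like walks with every edge of multiplicity exactly two) and the second term bounds all walks using some edge more than twice.

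Then Markov's inequality gives, for any constant $C_0 > 0$,
\[
\mathbb{P}\bigl(\|\tilde W\| > C_0\sqrt N\bigr) \;\le\; \frac{\mathbb{E}[\mathrm{tr}(\tilde W^{2k})]}{(C_0\sqrt N)^{2k}} \;\le\; N\!\left(\frac{4\|f\|_\infty}{C_0^2}\right)^{k} + \left(\frac{k^{C_1}T_N^{C_1}}{C_0^{2}}\right)^{k}.
\]
Choose $C_0^2 > 8\|f\|_\infty$ so that the first base is $\le 1/2$, and pick $k_N = \lceil (\log N)^{1+\varepsilon}\rceil$ for some small $\varepsilon > 0$. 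Since $T_N$ and $k$ are both polylogarithmic, $k^{C_1}T_N^{C_1}$ is polylog, and for $C_0$ large enough (depending only on $B$ and $\|f\|_\infty$) the second base is also bounded by $1/2$. The right-hand side is then $\le N\cdot 2^{-k_N} \le e^{-(\log N)^{1+\varepsilon/2}}$ for all large $N$, which is exactly the high-probability decay required by the definition of $O_{hp}$.

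The main obstacle is calibrating the three parameters $T_N$, $k_N$, $C_0$ simultaneously: $T_N$ must be large enough for the truncation union bound to be super-polynomially small, $k_N$ must grow as a power of $\log N$ exceeding the desired $\eta$, yet the resulting surplus term $k^{C_1k}T_N^{C_1 k}$ must still be absorbed into $C_0^{2k}$. The truncation level $T_N = (\log N)^{2B+1}$ is exactly what the moment condition \eqref{eq:momentcond} produces, and because only the bases need to be beaten (any polylog lives inside the exponent with plenty of room), the bookkeeping goes through cleanly. All other ingredients—Füredi–Komlós walk counting and Markov's inequality—are standard.
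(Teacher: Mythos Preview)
Your overall strategy (trace/moment method, possibly with a preliminary truncation) is the right one and is essentially what the paper does, but two of your quantitative steps fail as written.

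First, the centering matrix after truncation does \emph{not} have norm $o(\sqrt N)$ by your estimate. You bound each entry by $T_N^{-1}\,\mathbb{E}[X^2]=O((\log N)^{-(2B+1)})$ and then multiply by $N$, getting $N/(\log N)^{2B+1}$, which is much larger than $\sqrt N$. The fix is easy: estimate $|\mathbb{E}[X\mathbf 1\{|X|>T_N\}]|\le (\mathbb{E} X^2)^{1/2}\mathbb{P}(|X|>T_N)^{1/2}=O(e^{-c(\log N)^2})$, so the centering matrix has norm $O(N e^{-c(\log N)^2})=o(1)$.

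Second, your ``surplus'' trace bound $k^{C_1k}T_N^{C_1k}N^{k}$ is too crude to close the argument. After dividing by $(C_0\sqrt N)^{2k}$ you need $(k^{C_1}T_N^{C_1}/C_0^{2})^{k}$ to be small, but $k^{C_1}T_N^{C_1}$ is a growing polylogarithm, so no \emph{fixed} $C_0$ makes the base $\le 1/2$ for all large $N$; your sentence ``for $C_0$ large enough \dots the second base is also bounded by $1/2$'' is simply false. The remedy is to keep the vertex count in the F\"uredi--Koml\'os enumeration: a closed walk with $t{+}1$ distinct vertices contributes $N^{t+1}$ labelings and at most $(Ck\,T_N)^{C'(k-t)}\|f\|_\infty^{t}$ in moments-times-shapes, so the ratio to the leading term carries a factor $\big((Ck\,T_N)^{C'}/N\big)^{k-t}\le 1$ since $k,T_N$ are polylogarithmic. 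Summing over $t$ then gives $\mathbb{E}\,\mathrm{tr}(\tilde W^{2k})\le O(1)\,N^{k+1}(4\|f\|_\infty)^k$, after which Markov with $k=\lfloor(\log N)^{1+\varepsilon}\rfloor$ and any fixed $C_0^2>8\|f\|_\infty$ yields the $e^{-(\log N)^{1+\varepsilon/2}}$ decay you want. The paper avoids both issues by skipping truncation altogether and quoting the trace bound of \cite[Theorem~2.1.22]{anderson2010introduction}, which already absorbs the moment condition~\eqref{eq:momentcond} into a factor $\sum_{j\le n}(n^{\tilde B}/N)^j=O(1)$; with $n=\lfloor(\log N)^3\rfloor$ this gives $\mathbb{P}(\|W\|>\tilde C\sqrt N)\le C_1e^{-a(\log N)^3}$ directly.
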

This is a standard result since $\E[W_N(j,\ell)^2] \le C$ uniformly in $j,\ell$, the semicircle law suggests $\|W_N\| \leq 2\sqrt{N C}$, and more refined arguments give the above concentration. A proof is provided in the Appendix for completeness.

Lemma \ref{lemma:NormBound} implies that the bulk of the eigenvalues of $A_N = W_N + (\text{finite-rank perturbations})$ lie in the interval $[-O(\sqrt{N}), O(\sqrt{N})]$ with high probability. In particular, any eigenvalues that are of order $\theta \gg \sqrt{N}$ must separate from the bulk and these will be the outliers we study.

The next two lemmas give bounds on certain quadratic forms involving $W$. They will be useful for controlling the higher-order terms in expansions. Let $$L := \lfloor \log N \rfloor.$$

\begin{lemma}({\bf Bound on moment of quadratic forms})\label{ExpectationBound} For any two $N\times 1$ deterministic vectors $u$ and $v$ whose entries are bounded by $N^{-1/2}$ in absolute value, there exists a constant $C_1>0$ such that for all integers $n$ with $2 \le n \le L$,
\[
|\mathbb E(u^\prime W^n v)| \le (C_1\sqrt{N})^n.
\]
\end{lemma}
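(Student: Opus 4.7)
The plan is to prove Lemma~\ref{ExpectationBound} by a direct moment/combinatorial expansion in the spirit of the classical Wigner moment method, adapted to our variance-profile setting and to delocalized test vectors. First I would expand
\[
\mathbb{E}(u' W^n v) = \sum_{i_0, i_1, \ldots, i_n \in [N]} u(i_0)\, v(i_n)\, \mathbb{E}\!\left[\prod_{j=1}^n W(i_{j-1}, i_j)\right],
\]
and observe that, by independence and mean-zero of the entries of $W$, the expectation vanishes unless every distinct unordered edge $\{i_{j-1}, i_j\}$ of the associated walk $i_0 \to i_1 \to \cdots \to i_n$ is traversed at least twice.

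Next I would group the surviving multi-indices by the \emph{shape} $S$ of the walk, recording the number of distinct vertices $v(S)$, the number of distinct edges $e(S)$, and the edge multiplicities $m_1, \ldots, m_{e(S)} \ge 2$ with $\sum_j m_j = n$. For a fixed shape, the number of labeled tuples realising $S$ is at most $N^{v(S)}$; the factor $|u(i_0)|\,|v(i_n)|$ is bounded by $N^{-1}$ since both vectors have sup-norm at most $N^{-1/2}$; and the moment factor is controlled by the variance profile for pair-matched edges ($\mathbb{E}|X|^2 \le \|f\|_\infty$) and by the sub-exponential bound $\mathbb{E}|X|^{m_j} \le m_j^{B m_j}$ for higher multiplicities. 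Since the walk is connected, $v(S) \le e(S) + 1 \le n/2 + 1$, so each shape contributes at most $N^{v(S)-1}\prod_j m_j^{B m_j}$.

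The main contribution will come from pair-matched tree walks (each $m_j = 2$, hence $e(S) = n/2$ and $v(S) = n/2 + 1$); these are counted by the Catalan number $C_{n/2} \le 4^{n/2}$ and collectively contribute at most $(4\|f\|_\infty)^{n/2} N^{n/2}$, which already matches the target bound $(C_1\sqrt{N})^n$. For every other shape, either some edge has multiplicity at least three or the underlying graph contains a cycle; in both cases one loses at least one additional factor of $N$ in $N^{v(S)-1}$ relative to the tree case. Under the hypothesis $n \le L = \lfloor \log N \rfloor$, the moment penalty $\prod_j m_j^{B m_j} \le n^{B n}$ and the crude shape count $n^n$ are both subpolynomial in $N$ (at most $N^{(B+1)\log\log N}$), and so secondary terms are comfortably absorbed into $C_1^n$ after enlarging the constant $C_1 = C_1(B, \|f\|_\infty)$.

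The hardest part will be the combinatorial bookkeeping for the secondary shapes. A clean way to organise it is to classify walks by their \emph{defect}, namely the excess traversal count $n - 2e(S)$ together with the cycle rank $e(S) - v(S) + 1$; every unit of defect buys at least one extra inverse power of $\sqrt{N}$ in the free-vertex count, while costing at most the subpolynomial moment/shape factor above. The restriction $n \le L$ in the hypothesis is precisely what keeps these penalties under control and delivers the clean uniform bound $(C_1\sqrt{N})^n$ for all $2 \le n \le L$.
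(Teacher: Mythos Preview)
Your combinatorial approach is the right idea, but there is a genuine gap in the bookkeeping for the secondary shapes. You claim that the moment penalty $\prod_j m_j^{B m_j} \le n^{Bn}$ and the crude shape count $n^n$ are ``subpolynomial in $N$'' and hence absorbable into $C_1^n$. This is false: for $n = L = \lfloor \log N \rfloor$ one has $n^{(B+1)n} = (\log N)^{(B+1)\log N} = N^{(B+1)\log\log N}$, which is \emph{super}polynomial in $N$, whereas $C_1^n = N^{\log C_1}$ is only polynomial. Concretely, your secondary contribution is bounded by $n^{(B+1)n} N^{(n-1)/2}$, and you need this to be at most $C_1^n N^{n/2}$, i.e.\ $n^{(B+1)n} \le C_1^n \sqrt{N}$; at $n = \log N$ the left side is $N^{(B+1)\log\log N}$ and the right side is $N^{\log C_1 + 1/2}$, so the inequality fails for all large $N$.

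The fix is to sharpen both the shape count and the moment factor so that they scale polynomially in $n$ \emph{per unit of defect}, not as $n^{O(n)}$ globally. This is exactly the F\"uredi--Koml\'os refinement already invoked in the proof of Lemma~\ref{lemma:NormBound}: the number of walk shapes with defect $d$ is at most $(Cn^{a})^d$, and the excess-moment factor from high-multiplicity edges is likewise at most $n^{C'd}$; the secondary contribution then becomes $N^{n/2}\sum_{d \ge 1}(n^{\tilde B}/\sqrt{N})^d = o(N^{n/2})$ for $n \le \log N$, since then $n^{\tilde B} = (\log N)^{\tilde B} \ll \sqrt{N}$. With that refinement your route does go through, but as written the plan does not close.

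For comparison, the paper bypasses all of this by recycling Lemma~\ref{lemma:NormBound} directly: on the high-probability event $E_N = \{\|W\| \le C\sqrt{N}\}$ one has trivially $|u'W^n v| \le \|u\|\,\|W\|^n\|v\| \le (C\sqrt{N})^n$, and the contribution from $E_N^c$ is handled by Cauchy--Schwarz, pairing a very crude bound $\mathbb{E}(u'W^nv)^2 \le N^{O(n)}$ (obtained by H\"older on each summand) against the super-polynomially small tail $\mathbb{P}(E_N^c) \le e^{-a(\log N)^3}$. No per-shape accounting is needed for the present lemma at all.
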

\begin{lemma}({\bf Concentration of quadratic forms})\label{ConcentrationBound} For $u,v$ as above, there exists $\eta>1$ such that
\begin{align}
\max _{1 \le n \le L}\mathbb P\left(\left|u^\prime W^n v - \mathbb E(u^\prime W^n v)\right| > N^{\frac{n-1}{2}}(\log N)^{\frac{n\xi}{4}}\right) = O(e^{-(\log N)^\eta})
\end{align}
\end{lemma}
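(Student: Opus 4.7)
The plan is to apply Markov's inequality to a high-order moment of $Y_n := u'W^n v - \mathbb{E}(u'W^n v)$, choosing the moment order $p$ to grow polylogarithmically in $N$ so that the bound achieves the required super-polynomial decay. Specifically, I will establish a moment estimate of the form
\[
\mathbb{E}[Y_n^{2p}] \le (Cnp)^{np}\, N^{(n-1)p}
\]
valid for all $1 \le n \le L$ and integers $p \ge 1$, with $C$ depending only on $B$ and $\|f\|_\infty$. Markov's inequality at the threshold $t_n := N^{(n-1)/2}(\log N)^{n\xi/4}$ then gives
\[
\mathbb{P}(|Y_n| > t_n) \le \left(\frac{Cnp}{(\log N)^{\xi/2}}\right)^{np}.
\]
Taking $p := \lfloor(\log N)^{\eta'}\rfloor$ for some $\eta' \in (1,\, \xi/2 - 1)$ (which exists because $\xi > 8$), the right side decays like $\exp\!\bigl(-c(\log N)^{\eta'+1}\log\log N\bigr)$, hence is bounded by $\exp(-(\log N)^\eta)$ for any $\eta \in (1, \eta'+1)$. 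A final union bound over the at most $L \le \log N$ values of $n$ only contributes an extra factor of $\log N$, absorbed into the super-polynomial decay.

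For the moment estimate, I expand $Y_n$ as a sum over length-$n$ paths,
\[
Y_n = \sum_{\mathbf{i} = (i_0, \ldots, i_n)} u_{i_0} v_{i_n}\bigl(X_{\mathbf{i}} - \mathbb{E} X_{\mathbf{i}}\bigr), \qquad X_{\mathbf{i}} := \prod_{r=1}^{n} X_{i_{r-1} i_r},
\]
so that $\mathbb{E}[Y_n^{2p}]$ becomes a sum over $2p$-tuples of paths weighted by $\prod_j u_{i_0^{(j)}} v_{i_n^{(j)}}$ and by the joint centered expectation $\mathbb{E}\prod_{j=1}^{2p}(X_{\mathbf{i}_j} - \mathbb{E} X_{\mathbf{i}_j})$. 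Independence of distinct entries $\{X_{k\ell}\}_{k \le \ell}$, together with a cumulant/cluster expansion, forces the joint expectation to vanish unless every distinct edge appearing in the pooled edge-multisets of $\mathbf{i}_1, \ldots, \mathbf{i}_{2p}$ has total multiplicity at least two. In the dominant matched-pair regime the $2p$ walks pair up into $p$ pairs, each pair tracing a tree on $n+1$ vertices and $n$ distinct edges each traversed twice, giving the extremal vertex count $V = (n+1)p$.

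The remaining bookkeeping is standard. The sum over path configurations contributes at most $N^V \le N^{(n+1)p}$; multiplied by $N^{-2p}$ from the $4p$ entries of $u, v$ (each bounded by $N^{-1/2}$), this produces the desired $N^{(n-1)p}$ factor. The moment condition $\mathbb{E}|X_{k\ell}|^m \le m^{Bm}$ contributes at most $(2^{2B})^{np}$ in the matched regime and is dominated by the geometric loss $N^{-(V_{\max}-V)}$ in any non-matched configuration, while a Catalan-type count of matched-pair shapes combined with pairings of the $2p$ walks yields the prefactor $(Cnp)^{np}$. The principal obstacle is precisely this combinatorial step, in three sub-parts: (i) verifying rigorously that centering kills every configuration with a single-multiplicity edge; (ii) establishing the extremal vertex bound $V \le (n+1)p$ in the open-walk setting with free endpoints weighted by $u$ and $v$, rather than the closed-walk setting glued into a trace; and (iii) showing that non-matched configurations are strictly subleading despite the potentially large moment contributions $m^{Bm}$. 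These are adaptations of the classical Wigner trace-method estimates, but the free endpoints require some additional case analysis relative to the closed-walk case.
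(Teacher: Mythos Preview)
Your approach coincides with the paper's: both apply Markov's inequality at a moment order $p$ growing polylogarithmically, with the moment of $u'W^nv-\mathbb{E}(u'W^nv)$ controlled via a combinatorial path expansion. The paper, however, does not attempt the sharp bound $\mathbb{E}[Y_n^{2p}]\le (Cnp)^{np}N^{(n-1)p}$ you propose. Instead it first splits $W=W'+W''$ into upper- and lower-triangular parts (so that entries along each of the resulting $2^n$ path-types are independent) and then quotes Lemma~6.5 of \cite{erdHos2013spectral} for the weaker estimate $\mathbb{E}|P_n|^p\le (Cnp)^{(B+1)np}N^{(n-1)p/2}$, handling $n=1$ separately via their Lemma~3.8. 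With that bound the paper takes $p=(\log N)^\eta/(Cn)$ for $\eta\in(1,\xi/(4(B+1)))$, which is precisely where the hypothesis $\xi>4(B+1)$ enters. Your route would need only $\xi>4$, but the trade-off in your obstacle~(iii) is exactly where the factor $(B+1)$ in the exponent arises in the standard references, and removing it is genuinely harder than citing the existing lemma. Note also that your moment bound cannot hold for \emph{all} $p\ge1$ with $C$ independent of $N$: a single edge of multiplicity $2np$ already contributes $(2np)^{2Bnp}$, which beats $(Cnp)^{np}$ once $p$ is comparable to $N$; the bound is however valid for $p$ up to a small power of $N$, which covers your polylogarithmic choice.
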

This is an adaptation of a result from \cite{erdHos2013spectral} to our setup with a variance profile.

Combining Lemmas \ref{ExpectationBound} and \ref{ConcentrationBound}, we see that for any fixed power $1\le n \le \log N$, $u' W^n v = \E(u'W^n v) + O_{hp}(N^{(n-1)/2} (\log N)^{n\xi/4})$. In particular, taking $n=1$, we have $u' W v = O_{hp}((\log N)^{\xi/4}))$ (since $\E(u' W v)=0$ for mean-zero entries in $W$). For $n=2$, $u' W^2 v = \E(u'W^2 v) + O_{hp}(\sqrt{N} (\log N)^{\xi/2}) = O_{hp}(N)$ since $\E(u' W^2 v) = O(N)$ by Lemma \ref{ExpectationBound}. These will be frequently used estimates.

We now prove the first-order behavior of the outliers and this is formalized in the next theorem.
\begin{theorem}\label{Th.probedge}\textbf{First-Order Behavior of Eigenvalues.} Under Assumptions \eqref{assump:A1} and \eqref{assump:A2}, we have for every $1 \leq i \leq k$,
\[
\lambda_i(A) = \theta \alpha_i \left(1 + o_{\mathrm{hp}}(1)\right)\,.
\]
\end{theorem}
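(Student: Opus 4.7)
}

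My plan is to exploit the separation of scales between the noise $W_N$ and the perturbation $\theta_N P_N$, where $P_N := \sum_{i=1}^k \alpha_i e_i e_i'$. The key tool is Weyl's inequality together with the norm bound from Lemma \ref{lemma:NormBound}. Writing $A = \theta P_N + W$ and applying Weyl, for each $1 \le i \le N$,
\[
  |\lambda_i(A) - \theta\,\lambda_i(P_N)| \;\le\; \|W\| \;=\; O_{hp}(\sqrt{N}).
\]
Since $\sqrt{N}/\theta \to 0$ under Assumption \ref{assump:A1} (in fact $\sqrt{N}/\theta = O((\log N)^{-\xi})$), this already shows that $\lambda_i(A)/\theta$ differs from $\lambda_i(P_N)$ by $o_{hp}(1)$. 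Hence everything reduces to identifying the top $k$ eigenvalues of the deterministic matrix $P_N$.

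The next step is to show that $\lambda_i(P_N) \to \alpha_i$ for $1 \le i \le k$ (and $\lambda_i(P_N) = 0$ for $i > k$). Because $P_N$ has rank at most $k$, its nonzero spectrum coincides with the spectrum of the $k \times k$ Gram-type matrix $G_N$ with entries $G_N(i,j) = \sqrt{\alpha_i \alpha_j}\, e_i' e_j$. Under the discretization defined in the setup,
\[
  e_i'e_j \;=\; \frac{1}{N}\sum_{\ell=1}^N h_i(\ell/N)\,h_j(\ell/N) \;\longrightarrow\; \int_0^1 h_i(x)h_j(x)\,dx \;=\; \delta_{ij},
\]
by Riemann sum convergence (using boundedness of the $h_i$ from Assumption \ref{assump:A2}) and orthonormality of $\{h_i\}$ in $L^2[0,1]$. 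Therefore $G_N \to \mathrm{diag}(\alpha_1,\ldots,\alpha_k)$ in operator norm, and continuity of eigenvalues gives $\lambda_i(P_N) = \alpha_i + o(1)$ for $1 \le i \le k$.

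Combining the two steps yields, with high probability,
\[
  \lambda_i(A) \;=\; \theta\alpha_i + \theta \cdot o(1) + O_{hp}(\sqrt{N})
  \;=\; \theta\alpha_i\Big(1 + o(1) + O_{hp}\big(\tfrac{\sqrt{N}}{\theta}\big)\Big)
  \;=\; \theta\alpha_i\big(1 + o_{hp}(1)\big),
\]
which is the desired statement. I do not anticipate a serious obstacle here: the argument is purely a deterministic perturbation inequality (Weyl) plus the already-stated probabilistic input on $\|W\|$. The only small point requiring care is the convergence $e_i'e_j \to \delta_{ij}$, which needs boundedness (hence Riemann integrability of the products $h_i h_j$) as supplied by Assumption \ref{assump:A2}; no further regularity of $h_i$ (such as Assumption \ref{assump:A3}) is needed for this first-order result.
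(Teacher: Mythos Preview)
Your proposal is correct and essentially identical to the paper's own proof. The paper writes the perturbation as $\E[A_N]=\theta uu'$ with $u=[\sqrt{\alpha_1}e_1,\ldots,\sqrt{\alpha_k}e_k]$, reduces the nonzero spectrum of $uu'$ to that of the $k\times k$ matrix $u'u$ (your $G_N$), uses Assumption~\ref{assump:A2} to get $u'u\to\mathrm{Diag}(\alpha_1,\ldots,\alpha_k)$, and finishes with Weyl's inequality and Lemma~\ref{lemma:NormBound}; this is exactly your argument in different notation.
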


\begin{proof}
 Observe that $\E[A_N] = \theta uu'$ where $u := [\sqrt{\alpha_1}e_1, \ldots, \sqrt{\alpha_k}e_k]'$ is an $N\times k$ matrix whose columns are the perturbation vectors scaled by $\sqrt{\alpha_i}$. The non-zero eigenvalues of $uu'$ coincide with those of the $k\times k$ matrix $u' u$, and by Assumption \ref{assump:A2} we have
\[
u^\prime u \to \mathrm{Diag} \left(\alpha_1,\alpha_2,\ldots,\alpha_k\right)
\]
as $N\to\infty$. Thus the eigenvalues of $\frac{1}{\theta}\E[A_N] = u u'$ converge to $\alpha_1,\ldots,\alpha_k$ (and the rest $N-k$ eigenvalues are 0). In particular, for large $N$, $\lambda_i(\E[A_N]) = \theta \alpha_i + o(\theta)$. Now, by the Weyl inequality, the difference between $\lambda_i(A_N)$ and $\lambda_i(\E[A_N])$ can be bounded by the operator norm of the noise matrix:
\begin{align*}
\left|\lambda_i(A_N)-\lambda_i\left(\mathbb E(A_N)\right)\right|\le\|W_N\|=O_{hp}\left(\sqrt{N}\right)\,,
\end{align*}
 by Lemma \ref{lemma:NormBound}. Therefore, for each fixed $\varepsilon>0$, with high probability we have
\[
\frac{\lambda_i\left(A_N\right)}{\theta\alpha_i} \in\left(1-\frac{C \sqrt{N}}{\theta \alpha_i}, 1+\frac{C \sqrt{N}}{\theta \alpha_i}\right),\]
for some constant $C>0$. As $N\to\infty$, the quantity $\frac{\sqrt{N}}{\theta} \to 0$ by Assumption \ref{assump:A1}, so the right-hand side interval shrinks to 1. Thus $\lambda_i(A_N)/\theta \alpha_i \to 1$ in probability, completing the proof. 
\end{proof}

Having established the location of the outliers to first order, we proceed to analyze their fluctuations. For notational convenience, in the remainder of this section we fix an index $i \in {1,\ldots, k}$ and focus on the corresponding eigenvalue $\lambda_i(A)$. 

Define $\mu := \lambda_i(A)$, and for brevity denote $\mu_0 := \E[\lambda_i(A)]$ which is a deterministic number that is close to $\theta \alpha_i$ by the previous theorem. We will eventually show that $\mu - \mu_0 = O_p(1)$. Let us also introduce a useful $k\times k$ matrix $K = K_N$ that depends on $W$ and $\mu$:
\begin{equation}\label{eq:VMatrix}
K(j,\ell) =
\begin{cases}
\displaystyle \theta\sqrt{\alpha_j \alpha_\ell} e_j' \Big(I - \frac{1}{\mu}W\Big)^{-1} e_\ell, &\text{if $\|W\| < \mu$},\\
0, &\text{otherwise}.
\end{cases}
\end{equation}
Note that $\|W\|<\mu$ holds with high probability for large $N$. $K$ is an effective matrix capturing the interaction between different signal directions $e_j$ and $e_\ell$ through the resolvent $(I - \frac{1}{\mu}W)^{-1}$. The diagonal entries $K(j,j)$ will turn out to be close to $\theta \alpha_j$, reflecting that the $j$-th outlier eigenvalue is near $\theta \alpha_j$.

Before proceeding, let us note an immediate consequence of Theorem \ref{Th.probedge} and Lemma \ref{lemma:NormBound}.

\begin{remark}\label{Bound.W/mu}
 From Theorem \ref{Th.probedge} we have $\mu = \lambda_i(A) \sim \theta \alpha_i$ for large $N$, with high probability. Meanwhile, Lemma \ref{lemma:NormBound} says $\|W\| = O_{hp}(\sqrt{N}) = o_{hp}(\theta)$ by Assumption \ref{assump:A1}. Combining these, we get
\begin{align*}
\left\|\frac{W}{\mu}\right\|=O_{hp}\left(\frac{\sqrt{N}}{\theta}\right) = o_{hp}(1)
\end{align*}
In particular, for large $N$, the operator $I - \frac{W}{\mu}$ is invertible with high probability, which justifies the expansion of the resolvent in a Neumann series.
\end{remark}

For the proof of Theorem \ref{Th.EigenValueCLT}, the strategy is to express $\mu = \lambda_i(A)$ in terms of the matrix $K$ defined above, and then to carefully analyze the fluctuations of $K$. Recall that $\lambda_i(A)$ is the $i$th largest eigenvalue of $A$. 

\begin{lemma}\label{EVa.combined}
As \( N \to \infty \), the matrix \( K \) satisfies
\[
K(j, l) = \theta \alpha_j \left( \one(j = l) + o_{hp}(1) \right), \quad 1 \le j, l \le k,
\]
and with high probability,
\[
\mu = \lambda_i(K).
\]
\end{lemma}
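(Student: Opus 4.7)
The plan is to combine the Neumann-series expansion of $(I-W/\mu)^{-1}$ with a translation of the eigenvalue equation for $A$ into a $k\times k$ eigenvalue equation for $K$. Remark \ref{Bound.W/mu} tells us that $\|W/\mu\|=o_{hp}(1)$, so with high probability
\begin{align*}
K(j,\ell) \;=\; \theta\sqrt{\alpha_j\alpha_\ell}\sum_{n=0}^{\infty}\mu^{-n}\,e_j' W^n e_\ell.
\end{align*}
The $n=0$ term equals $\theta\sqrt{\alpha_j\alpha_\ell}\,e_j'e_\ell$, which by Assumption \ref{assump:A2} converges deterministically to $\theta\alpha_j\,\one(j=\ell)+o(\theta)$. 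For $1\le n\le L$, Lemmas \ref{ExpectationBound} and \ref{ConcentrationBound} combine to give $|e_j' W^n e_\ell|\le (C\sqrt{N})^n + N^{(n-1)/2}(\log N)^{n\xi/4}$ w.h.p.; since $\mu\asymp\theta\alpha_i$ and $\sqrt{N}(\log N)^{\xi/4}/\theta=o(1)$ by Assumption \ref{assump:A1}, summing the resulting geometric series yields a contribution of order $O_{hp}(\sqrt{N}/\theta+(\log N)^{\xi/4}/\theta)$, which is $o_{hp}(1)$ after dividing by $\theta$. For the tail $n>L$, the crude bound $|\mu^{-n}e_j' W^n e_\ell|\le(\|W\|/\mu)^n$ combined with $\|W\|/\mu=O_{hp}((\log N)^{-\xi})$ gives a super-polynomially small remainder. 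Together these estimates establish the first claim.

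For the second claim, Theorem \ref{Th.probedge} and Lemma \ref{lemma:NormBound} imply $\mu>\|W\|$ w.h.p., so $\mu I-W$ is invertible. Write $U:=[\sqrt{\alpha_1}e_1,\dots,\sqrt{\alpha_k}e_k]$ so that $A=W+\theta UU'$. Any unit eigenvector $v$ of $A$ associated with $\mu$ then satisfies $v=\theta(\mu I-W)^{-1}U(U'v)$. Setting $w:=U'v\in\mathbb{R}^k$ and left-multiplying by $U'$ yields $w=\theta U'(\mu I-W)^{-1}Uw=\mu^{-1}Kw$, because the $(j,\ell)$ entry of $\theta U'(\mu I-W)^{-1}U$ is precisely $\mu^{-1}K(j,\ell)$. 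If $w=0$, then $(\mu I-W)v=\theta Uw=0$, forcing $\mu\in\mathrm{spec}(W)$, which contradicts $\mu\sim\theta\alpha_i\gg\|W\|$. Hence $w\neq 0$ and $\mu$ is an eigenvalue of $K$.

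It remains to identify $\mu$ with the specific eigenvalue $\lambda_i(K)$. The first claim yields $\|K-\theta\,\diag(\alpha_1,\dots,\alpha_k)\|=o_{hp}(\theta)$, so by Weyl's inequality $\lambda_j(K)=\theta\alpha_j(1+o_{hp}(1))$ for each $1\le j\le k$, and the consecutive gaps $\lambda_j(K)-\lambda_{j+1}(K)$ are of order $\theta(\alpha_j-\alpha_{j+1})$. On the other hand Theorem \ref{Th.probedge} gives $\mu=\theta\alpha_i(1+o_{hp}(1))$, so $\mu$ lies within $o(\theta)$ of $\theta\alpha_i$ while every $\lambda_j(K)$ with $j\neq i$ sits at distance of order $\theta$ from $\theta\alpha_i$; the only consistent assignment is $\mu=\lambda_i(K)$. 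The main technical obstacle is controlling the Neumann tail for $n>L$, where Lemma \ref{ConcentrationBound} no longer applies; the super-polynomial smallness of $(\|W\|/\mu)^L$ afforded by Assumption \ref{assump:A1} reduces this to routine geometric-series bookkeeping, but it is the point that requires the most care.
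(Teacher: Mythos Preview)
Your proof is correct. Both claims are established, though your approach differs from the paper's in each part.

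For the first claim, the paper takes a much shorter route: it simply bounds the operator norm $\bigl\|\sum_{n\ge 1}\mu^{-n}W^n\bigr\|\le \frac{\|W/\mu\|}{1-\|W/\mu\|}=o_{hp}(1)$ directly from Remark~\ref{Bound.W/mu}, so that $(I-W/\mu)^{-1}=I+o_{hp}(1)$ in norm, and then invokes $e_j'e_\ell\to\one(j=\ell)$. Your term-by-term analysis via Lemmas~\ref{ExpectationBound} and~\ref{ConcentrationBound}, with a separate treatment of the tail $n>L$, is more laborious than necessary here; the entrywise bounds you extract are sharper than $o_{hp}(1)$, but the lemma only asks for $o_{hp}(1)$, which the operator-norm argument delivers in one line.

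For the second claim, the paper does not give a self-contained argument but instead cites the Gershgorin-circle proof of \cite[Lemma~5.2]{chakrabarty2020eigenvalues}. Your approach---deriving $Kw=\mu w$ from the eigenvector equation, ruling out $w=0$ via $\mu\notin\mathrm{spec}(W)$, and then pinning down the index by Weyl's inequality and the gap structure of the $\alpha_j$---is a clean, self-contained alternative. Both routes hinge on the same first-order information ($K\approx\theta\,\diag(\alpha_1,\dots,\alpha_k)$ and $\mu\approx\theta\alpha_i$), so the identification step is essentially the same in spirit; your version just makes the eigenvalue-equation reduction explicit rather than outsourcing it.
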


\begin{proof}
Fix \( 1 \le j, l \le k \). On the high-probability event \( \left\| \frac{W}{\mu} \right\| < 1 \), Remark~\ref{Bound.W/mu} implies
\[
\left\| \sum_{n \ge 1} \frac{W^n}{\mu^n} \right\| = O_{hp}\left( \frac{\|W\|}{\mu} \right) = o_{hp}(1),
\]
so that
\[
\left( I - \frac{W}{\mu} \right)^{-1} = I + o_{hp}(1).
\]
Assumption \eqref{assump:A2} gives \( \lim_{N \to \infty} e_j^\prime e_l = \one(j = l) \). Therefore,
\[
\frac{K(j, l)}{\theta} = \sqrt{\alpha_j \alpha_l} \, e_j^\prime \left(I - \frac{W}{\mu}\right)^{-1} e_l = \sqrt{\alpha_j \alpha_l} \left( \one_{\{j = l\}} + o_{hp}(1) \right),
\]
which simplifies to
\[
V(j, l) = \theta \alpha_j \left( \one(j = l) + o_{hp}(1) \right).
\]

For the second claim, the equality \( \mu = \lambda_i(V) \) holds with high probability by an argument identical to that of \cite[Lemma 5.2]{chakrabarty2020eigenvalues} using the Gershgorin Circle theorem. 
\end{proof}

\subsection{Proof of Theorem \ref{Th.EigenValueCLT}: a sketch}
Let us denote $Y_n$ matrix as follows:
\begin{equation}\label{eq:Y}
 Y_n(j, \ell):=\sqrt{\alpha_j \alpha_{\ell}} \theta e_j^{\prime} W^n e_{\ell}.
\end{equation}

We briefly sketch the main steps, deferring technical details to the Appendix \ref{appendixA}.

\paragraph{{\bf Step 1:   Resolvent expansion.}} By Theorem \ref{Th.probedge}, the outlier eigenvalue $\mu=\lambda_i(A)$ is of order $\theta$, while a norm bound on the noise matrix $W$ gives $\|W\| = O(\sqrt{N})$. Thus $\big\|\frac{W}{\mu}\big\| = o_{hp}(1)$, and we can invert $I - \frac{W}{\mu}$ as a Neumann series. This yields an infinite series representation
   \begin{equation}\label{K:inf}
   K=\sum_{n=0}^{\infty} \mu^{-n} Y_n
   \end{equation}

    for an appropriately defined $k\times k$ matrix $K$ (with entries involving $W^n$) such that $\mu = \lambda_i(K)$.

\paragraph{\bf Step 2: Truncation (Lemma \ref{Le.S1}).} We truncate the above series at $n=L=\lfloor \log N\rfloor$. Lemma \ref{Le.S1} shows that the tail contribution from $n>L$ is negligible, giving
    \[
    \mu=\lambda_i\left(\sum_{n=0}^L \mu^{-n} Y_n\right)+o_{h p}(1)
    \]

\paragraph{\bf Step 3: Replacing higher-order terms (Lemma \ref{Le.S2}).} Next we replace each random matrix $Y_n$ in the truncated sum for $2 \le n \le L$ by its expectation. A concentration bound (Lemma \ref{ConcentrationBound}) ensures
    \[
    \sum_{n=2}^L \mu^{-n}\left(Y_n-\E[Y_n]\right)=o_{h p}(1)
    \]

    which yields
  \[
  \mu=\lambda_i\left(Y_0+\frac{Y_1}{\mu}+\sum_{n=2}^L \mu^{-n} \E\left[Y_n\right]\right)+o_{h p}(1).
  \]
\paragraph{\bf Step 4: Deterministic approximation (Lemma \ref{Le.S3}).} We now introduce a deterministic approximation for $\mu$. Consider the equation
    \[
    x=\lambda_i\left(Y_0 + \sum_{n=2}^L x^{-n} \E\left[Y_n\right]\right),
    \]
    which (by Lemma \ref{Le.S3}) has a solution $x=\tilde{\mu}$ for large $N$. Moreover, $\tilde{\mu}/\theta \to \alpha_i$ as $N\to\infty$. Intuitively, $\tilde{\mu}$ is the eigenvalue obtained by replacing all random quantities in the truncated sum with their deterministic limits or expectations.

\paragraph{\bf Step 5: Bounding the difference (Lemma \ref{Le.S4}).} We then show that $\mu$ is close to $\tilde{\mu}$. In particular, Lemma \ref{Le.S4} gives
    \[
    \mu-\tilde{\mu}=O_{h p}\left(\frac{\left\|Y_1\right\|}{\theta}+1\right)
    \]
    implying that $|\mu-\tilde{\mu}|= O_{hp}((\log N)^{\xi/4})$.

\paragraph{\bf Step 6: Extracting the main fluctuation (Lemma \ref{Le.S5}).} We next isolate the leading random term. Lemma \ref{Le.S5} shows that
   \[
   \mu=\bar{\mu}+\frac{1}{\mu} Y_1(i, i)+o_{h p}\left(\frac{\left\|Y_1\right\|}{\theta}+1\right),
   \]

    for some deterministic $\bar{\mu}$ in the vicinity of $\tilde{\mu}$ (in fact, $\bar{\mu}=\lambda_i(X)$ for a certain $k\times k$ matrix $X$ built from $\E[Y_n]$, and one can check $\bar{\mu}/\theta \to \alpha_i$). Here $Y_1(i,i) = \alpha_i\theta e_i' W e_i$ is the key random component driving the fluctuation of $\mu$.

\paragraph{\bf Step 7: Final expression.} Finally, since $\bar{\mu}$ differs from $\mu_0 := \E[\mu]$ by an $o(1)$ term (one can show $\E[\mu] = \bar{\mu} + o(1)$ as $N\to\infty$), we may replace $\bar{\mu}$ with $\mu_0$ in the previous equation without affecting the $o_p(1)$ error. This yields
   \begin{equation}
    \mu-\mu_0=\frac{1}{\mu} Y_1(i, i)+o_p(1)
    \end{equation}
Also what we have an useful bound which will be frequently used throughout the article.
\begin{equation}\label{eq:mu-mu0}
    \mu-\mu_0=O_{hp}\left(\frac{\|Y_1\|}{\theta}+1\right).
    \end{equation}
    Recalling that $\mu = \lambda_i(A)$, $Y_1(i,i)=\alpha_i\theta e_i' W e_i$, and $\mu_0=\E[\lambda_i(A)]$, we obtain
   \[
   \lambda_i(A)=\E\left[\lambda_i(A)\right]+\frac{\theta \alpha_i}{\lambda_i(A)} e_i^{\prime} W e_i+o_p(1),
   \]

    as claimed in Theorem \ref{Th.EigenValueCLT}.
For the joint CLT, we rely on the asymptotic normality of the family $(e_i' W e_i)_{i=1}^k$. First, by a direct covariance computation one finds that for any $1\le i,j \le k$,
\[
\mathrm{Cov}\left(e_i^\prime We_i,e_j^\prime We_j\right)= \frac{2}{N^2}\sum_{1\le a, b\le N}h_i\left(\frac {a}{N}\right)h_i\left(\frac {b}{N}\right)h_j\left(\frac {a}{N}\right)h_j\left(\frac {b}{N}\right)f\left(\frac{a}{N},\frac{b}{N}\right)
\]

which converges, as $N\to\infty$, to
\[
2\int_0^1\int_0^1h_i(x)h_i(y)h_j(x)h_j(y)f(x,y)\,dx\,dy
\]

Moreover, since each $e_i' W e_i$ is a (normalized) sum of many independent random variables (essentially a sum of $W$’s entries on the $i$th row/column), the multivariate Lindeberg–Lévy CLT applies. Hence as $N\to\infty$,
\[
\left(e_1^{\prime} W e_1, e_2^{\prime} W e_2, \ldots, e_k^{\prime} W e_k\right) \Rightarrow\left(G_1, G_2, \ldots, G_k\right),
\]
where $(G_1,\ldots,G_k)$ is a mean-zero Gaussian vector with the above covariance structure. Finally, by Theorem \ref{Th.probedge} we know $\lambda_i(A)$ is w.h.p. nonzero (and of order $\theta$), and Theorem \ref{Th.EigenValueCLT} provides the linear approximation linking $\lambda_i(A) - \E[\lambda_i(A)]$ to $e_i' W e_i$. By Slutsky’s theorem, we can therefore deduce that
\[
\left(\lambda_1(A)-\E\left[\lambda_1(A)\right], \ldots, \lambda_k(A)-\E\left[\lambda_k(A)\right]\right) \Rightarrow\left(G_1, \ldots, G_k\right),
\]
as required for Theorem \ref{Th.EigenValueCLT}.

\section{Technical Estimates and Series representations for the Eigenvector}\label{section:evseries}

In this section, we provide some technical estimates which are crucial for the eigenvector results stated in Section~\ref{subsec:mainresults}, namely
Theorem~\ref{Th.EigenVectorAlignment.CLT} (Gaussian fluctuation of the aligned component),
and Theorem \ref{Decolalization} (eigenvector
delocalization). Throughout, we assume that Assumptions~\ref{assump:A1}--\ref{assump:A3} hold. We
begin with two auxiliary lemmas on the eigenvector alignment and on a bound for powers of $W$.

\noindent\textbf{Proof Outline of Theorem \ref{Th.EigenVectorAlignment.CLT}.} We begin by expressing the deviation $e_i'v_i - \E[e_i'v_i]$ in terms of the random matrix $W$.  Using the eigenvector perturbation expansion (as in the proof of Theorem \ref{Th.probedge}), one shows that the leading random term is proportional to the quadratic form $e_i'W^2e_i - \E[e_i'W^2e_i]$.  All other terms are of smaller order under Assumption \eqref{assump:A1}.  Since $e_i'W^2e_i$ is a sum of weakly dependent terms, a martingale central limit theorem (or concentration lemmas) imply that $(e_i'W^2e_i - \E[e_i'W^2e_i])/\sqrt{N}$ converges to a Gaussian law.  This yields the asymptotic normality of $(\theta\alpha_i)^2(e_i'v_i - \E[e_i'v_i])/\sqrt{N}$.  For $j\neq i$, one similarly expresses $e_j'v_i - \E[e_j'v_i]$ in terms of the linear form $e_j'We_i$ plus negligible error; then the classical Lindeberg CLT gives the Gaussian limit for $\theta(e_j'v_i-\E[e_j'v_i])$.  The detailed argument is given below. 

\subsection{Preliminary lemmas for the proof of Theorem \ref{Th.EigenVectorAlignment.CLT}}
We begin with a few essential estimates from the eigenvalue analysis. 
\begin{remark}\label{Error.negpow,mumu0}
Following the line of the proof of Lemma \ref{Le.S4}, one can see that for any fixed $m \ge 1$,
\begin{equation}
\mu^{-m} - \mu_1^{-m} = O_{hp}\left(\frac{|\mu-\mu_1|}{\theta^{m+1}}\right)
\end{equation}
for any $\mu_1$ such that $\mu_1 \sim \theta$ (in particular, this holds for $\mu_1=\bar{\mu}$ or $\mu_1=\E[\mu]$).
\end{remark}
\begin{remark}
Similar to Remark \ref{Error.negpow,mumu0}, denoting $\mathbb E(\mu)$ by $\mu_0$, Lemma \ref{Le.S4} along with \eqref{lim.mu0-mubar} imply  
\begin{equation}\label{Error.mu-mu0}
\mu - \mu_0 = O_{hp}\left(\frac{\|Y_1\|}{\theta} + 1\right).   
\end{equation}
\end{remark}
\begin{lemma}\label{EigenVector.1}
With Assumptions \ref{assump:A1}-\ref{assump:A3} in place, as $N \to \infty$ we have %for $j \ne l$
\[
e_j^\prime\left(I - \frac{W}{\mu}\right)^{-m}e_l = \one(j =l) + o_{hp}(\frac{\sqrt{N}}{\theta}), \text{ for } m\in \{1,2\}.
\]
\end{lemma}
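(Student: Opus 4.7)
The strategy is to expand the resolvent as a Neumann series and bound each term using the estimates in Section \ref{section:eigenvalue fluctuations}. On the high-probability event $\|W/\mu\| < 1$ guaranteed by Remark \ref{Bound.W/mu}, one has
\begin{equation*}
e_j^\prime(I - W/\mu)^{-m} e_l = \sum_{n=0}^\infty c_n^{(m)}\, \mu^{-n}\, e_j^\prime W^n e_l,
\end{equation*}
with $c_n^{(1)} = 1$ and $c_n^{(2)} = n+1$. I will split this sum into three pieces, $n=0$, $1 \le n \le L := \lfloor \log N \rfloor$, and $n > L$, and show that each piece is either $\one(j=l)$ or $o_{hp}(\sqrt{N}/\theta)$.

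The $n=0$ term is $e_j^\prime e_l$. Using the Lipschitz regularity of $h_i$ (Assumption \ref{assump:A3}), the Riemann-sum error is $O(1/N)$, so $e_j^\prime e_l = \one(j=l) + O(1/N) = \one(j=l) + o(\sqrt{N}/\theta)$, using $\theta \preceq N$. The tail $n > L$ is controlled by $\sum_{n > L}(n+1)\|W/\mu\|^n$, and Remark \ref{Bound.W/mu} together with Assumption \ref{assump:A1} gives $\|W/\mu\| = O_{hp}((\log N)^{-\xi})$, so this tail is super-polynomially small.

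The substantive work is the middle range $1 \le n \le L$. For $n=1$, since $\mathbb{E}[e_j^\prime W e_l] = 0$, Lemma \ref{ConcentrationBound} yields $|e_j^\prime W e_l| = O_{hp}((\log N)^{\xi/4})$, so the $n=1$ contribution is $O_{hp}((\log N)^{\xi/4}/\theta) = o_{hp}(\sqrt{N}/\theta)$. For $n \ge 2$, combining Lemmas \ref{ExpectationBound} and \ref{ConcentrationBound} gives $|e_j^\prime W^n e_l| \le C\bigl(\sqrt{N}(\log N)^{\xi/4}\bigr)^n$ with high probability, so each such term is bounded by $c_n^{(m)} \rho^n$ with $\rho := \sqrt{N}(\log N)^{\xi/4}/\theta$. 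Under Assumption \ref{assump:A1}, $\rho \le (\log N)^{-3\xi/4} \to 0$, and the geometric series is dominated by its $n=2$ term $\rho^2 = N(\log N)^{\xi/2}/\theta^2$, which satisfies $\rho^2 \cdot \theta/\sqrt{N} = \sqrt{N}(\log N)^{\xi/2}/\theta \le (\log N)^{-\xi/2} \to 0$. Hence $\rho^2 = o(\sqrt{N}/\theta)$, and summing the three regimes gives the claim.

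The proof is mainly a careful bookkeeping exercise. The main point to watch is that the geometric factor $\rho$ is actually $o(1)$; this is precisely where the lower bound $\theta \succeq \sqrt{N}(\log N)^\xi$ in Assumption \ref{assump:A1} is used in an essential way, both to suppress the $n=1$ polylog factor $(\log N)^{\xi/4}$ below the threshold $\sqrt{N}$ and to make the Neumann tail convergent at a scale finer than $\sqrt{N}/\theta$.
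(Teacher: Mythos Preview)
Your proof is correct and follows essentially the same approach as the paper: expand the resolvent as a Neumann series, split at $L=\lfloor\log N\rfloor$, and bound the pieces using Lemmas \ref{ExpectationBound} and \ref{ConcentrationBound} together with $e_j^\prime e_l=\one(j=l)+O(1/N)$ from Assumption \ref{assump:A3}. The only cosmetic difference is that the paper, anticipating later lemmas, already replaces $\mu$ by $\mu_0$ in the $n\ge3$ terms and carries an explicit $|\mu-\mu_0|$ error (equation \eqref{BasicExpansion}), whereas you simply keep $\mu$ and use $\mu\ge\theta\alpha_i/2$ w.h.p.; for the present statement your route is slightly cleaner.
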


\begin{proof} Following the arguments in the proof of Theorem \ref{Th.EigenValueCLT} , we expand the resolvent (on the high probability event) as
$$\Bigl(I - \frac{W}{\mu}\Bigr)^{-1} = I + \frac{W}{\mu} + \frac{W^2}{\mu^2} + \sum_{n=3}^L \frac{W^n}{\mu^n} + \sum_{n=L}^\infty \frac{W^n}{\mu^n}.$$
By arguments leading up to \eqref{eq:mu-mu0}, we have for fixed $1 \le j,l \le k$, this yields the expansion
\begin{equation}\label{BasicExpansion}
e_j^\prime\Bigl(I-\frac{W}{\mu}\Bigr)^{-1}e_l = e_j^\prime e_l + \frac{e_j^\prime W e_l}{\mu} + \frac{e_j^\prime W^2 e_l}{\mu^2} + \sum_{n=3}^L \frac{\mathbb E\bigl(e_j^\prime W^n e_l\bigr)}{\mu_0^n} + o_{hp}\Bigl(\frac{N|\mu - \mu_0|}{\theta^3} + \frac{\sqrt{N}}{\theta^2}\Bigr).
\end{equation}
Here we have used the fact that $\mu = \mu_0 + o_{hp}(1)$. 

Next, by Lemma \ref{ConcentrationBound} and Lemma \ref{ExpectationBound} along with Assumption \ref{assump:A3}, we can bound each term in \eqref{BasicExpansion}.  In particular, using these bounds and that $e_j^\prime e_l = \one(j=l) + O(1/N)$, we obtain
\begin{align}
&\label{BasicApprox1}e_j^\prime\left(I-\frac{W}{\mu}\right)^{-1}e_l \\ & = e_j
^\prime e_l + O_{hp}\left(\frac{(\log N)^{\xi/4}}{\theta}\right) + O
_{hp}\left(\frac{N}{\theta^2}\right) + O\left(\frac{N^{3/2}}{\theta^3}\right) + o_{hp}\left[ \frac{N|\mu - \mu_0|}{\theta^3} + \frac{\sqrt{N}}{\theta^2}\right]\\
\nonumber & = \one(j =l) + O\left(\frac{1}{N}\right) + o_{hp}\left(\frac{\sqrt{N}}{\theta}\right) + o_{hp}\left[ \frac{N}{\theta^3}(\log N)^{\xi/4} + \frac{\sqrt{N}}{\theta^2}\right]\\
&\label{BasicApprox2} = \one(j =l) + o_{hp}\left(\frac{\sqrt{N}}{\theta}\right)
\end{align}
Hence
\[
e_j^\prime\Bigl(I-\frac{W}{\mu}\Bigr)^{-1}e_l = \one(j=l) + o_{hp}\Bigl(\frac{\sqrt{N}}{\theta}\Bigr),
\]
as claimed.  The case $m=2$ follows by a similar argument. 
\end{proof}

In what follows we define 
\begin{equation}\label{def:s_m}
S_m^{ij} = \sum_{n = m}^L \frac{\mathbb E(e_i^\prime W^n e_j)}{\mu_0^n} \text{ and } \tilde S_{m}^{ij} = \sum_{n = m}^L \frac{\mathbb E(e_i^\prime W^n e_j)(n+1)}{\mu_0^n}
\end{equation}

\begin{lemma}\label{EigenVector.2}

Under assumptions \ref{assump:A1}-\ref{assump:A3}, as $N \to \infty$ we have for $j,l \neq i$
\begin{align*}
\Bigl[e_j^\prime\Bigl(I - \frac{W}{\mu}\Bigr)^{-1}e_i\Bigr]\Bigl[e_l^\prime\Bigl(I - \frac{W}{\mu}\Bigr)^{-1}e_i\Bigr]
&= \frac{\mathbb E(e_j^\prime W^2 e_i)\,\mathbb E(e_l^\prime W^2 e_i)}{\mu_0^4} + \frac{\mathbb E(e_j^\prime W^2 e_i)}{\mu_0^2}\,S_3^{li}
\\
&+ \frac{\mathbb E(e_l^\prime W^2 e_i)}{\mu_0^2}\,S_3^{ji} + S_3^{ji}\,S_3^{li} 
%&+ o_{hp}\Bigl(\Bigl(1 + \tfrac{\|Y_1\|}{\theta} + \tfrac{\|Y_1\|^2}{\theta^2}\Bigr)\frac{N}{\theta^3} + \frac{\sqrt{N}}{\theta^2}\Bigr),
+ o_{hp}\left( \frac{\sqrt{N}}{\theta^2}\right)
\end{align*}
where $S_3^{ji}$ and $S_3^{li}$ are as defined in \eqref{def:s_m}.
\end{lemma}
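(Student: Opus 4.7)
The plan is to expand each factor $e_j^\prime(I-W/\mu)^{-1}e_i$ via its Neumann series (valid with high probability by Remark \ref{Bound.W/mu}), identify the dominant part as $\mathbb E(e_j^\prime W^2 e_i)/\mu_0^2 + S_3^{ji}$, and absorb the remaining smaller contributions into a single error $\epsilon^{ji}$. Expanding the product then immediately yields the four target cross-products plus $A_j\epsilon^{li}+A_l\epsilon^{ji}+\epsilon^{ji}\epsilon^{li}$, which I will show collectively are $o_{hp}(\sqrt{N}/\theta^2)$.

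Following the expansion used in the proof of Lemma \ref{EigenVector.1}, I will write
\[
e_j^\prime\bigl(I-W/\mu\bigr)^{-1} e_i = \frac{\mathbb E(e_j^\prime W^2 e_i)}{\mu_0^2} + S_3^{ji} + \epsilon^{ji},
\]
where $\epsilon^{ji}$ collects: (a) the $n=0$ inner product $e_j^\prime e_i=\langle h_j,h_i\rangle+O(1/N)=O(1/N)$ since $j\neq i$ by Assumptions \ref{assump:A2}--\ref{assump:A3}; (b) the mean-zero linear form $e_j^\prime W e_i/\mu = O_{hp}((\log N)^{\xi/4}/\theta)$, bounded via Lemma \ref{ConcentrationBound}; (c) the $n=2$ concentration error $(e_j^\prime W^2 e_i - \mathbb E(e_j^\prime W^2 e_i))/\mu^2 = O_{hp}(\sqrt{N}(\log N)^{\xi/2}/\theta^2)$; (d) the deterministic replacement error $\mathbb E(e_j^\prime W^2 e_i)(\mu^{-2}-\mu_0^{-2})$, which by Remark \ref{Error.negpow,mumu0} combined with $|\mu-\mu_0|=O_{hp}((\log N)^{\xi/4})$ from \eqref{eq:mu-mu0} and Lemma \ref{ExpectationBound} is $O_{hp}(N(\log N)^{\xi/4}/\theta^3)$; and (e) the truncation tail $\sum_{n>L}$, which is super-polynomially small by a standard geometric argument using $\sqrt{N}/\theta=o(1)$. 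Under Assumption \ref{assump:A1} the largest of these is (b), giving $\epsilon^{ji}=O_{hp}((\log N)^{\xi/4}/\theta)$.

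Writing $A_j:=\mathbb E(e_j^\prime W^2 e_i)/\mu_0^2 + S_3^{ji}=O(N/\theta^2)$ (the second summand being subdominant since $\sqrt{N}/\theta=o_{hp}(1)$), the product expands as $A_jA_l+A_j\epsilon^{li}+A_l\epsilon^{ji}+\epsilon^{ji}\epsilon^{li}$. The main term $A_jA_l$ expands algebraically into the four summands appearing in the claim. For the cross terms, $A_j\epsilon^{li}=O_{hp}(N(\log N)^{\xi/4}/\theta^3)$, and its ratio with $\sqrt{N}/\theta^2$ equals $\sqrt{N}(\log N)^{\xi/4}/\theta\le (\log N)^{-3\xi/4}\to 0$ under Assumption \ref{assump:A1}; likewise $\epsilon^{ji}\epsilon^{li}=O_{hp}((\log N)^{\xi/2}/\theta^2)=o_{hp}(\sqrt{N}/\theta^2)$. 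The principal obstacle is purely quantitative bookkeeping: one must verify that the polylogarithmic slack from $\theta\succeq\sqrt{N}(\log N)^\xi$ suffices to push each of the roughly dozen resulting cross terms below the threshold $\sqrt{N}/\theta^2$, which is precisely why Assumption \ref{assump:A1} includes the fairly large exponent $\xi>4(B+1)\vee 8$.
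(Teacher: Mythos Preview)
Your proposal is correct and follows essentially the same route as the paper: Neumann expansion via \eqref{BasicExpansion}, concentration via Lemma~\ref{ConcentrationBound}, and the $\mu\to\mu_0$ replacement via Remark~\ref{Error.negpow,mumu0} together with \eqref{eq:mu-mu0}. The only cosmetic difference is that you first isolate the deterministic target $A_j=\mathbb E(e_j'W^2e_i)/\mu_0^2+S_3^{ji}$ in each factor and then multiply, whereas the paper multiplies first (retaining the random $e_j'W^2e_i/\mu^2$) and replaces term by term afterward; the resulting size estimates coincide.
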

\begin{proof}
Using \eqref{BasicExpansion} and \eqref{eq:mu-mu0}, we obtain
\begin{align}
\nonumber &\left[e_j^\prime\left(I - \frac{W}{\mu}\right)^{-1}e_i\right]\left[e_l^\prime\left(I - \frac{W}{\mu}\right)^{-1}e_i\right] \\
\nonumber &\quad= \frac{e_j^\prime W^2 e_i \cdot e_l^\prime W^2 e_i}{\mu^4} + \frac{e_j^\prime W^2 e_i}{\mu^2} S_3^{li} + \frac{e_l^\prime W^2 e_i}{\mu^2} S_3^{ji}+ S_3^{ji}S_3^{li} \\
&\qquad\quad+o_{hp}\left(\frac{N|\mu - \mu_0|}{\theta^3} + \frac{\sqrt{N}}{\theta^2}\right) + o_{hp}\left[\frac{N|\mu - \mu_0|}{\theta^3} + \frac{\sqrt{N}}{\theta^2}\right]^2.\label{eq:mid_expansion}
\end{align}

Using Lemma \ref{ConcentrationBound} and Remark \ref{Error.negpow,mumu0}, we further estimate the first term on the right-hand side of \eqref{eq:mid_expansion} as
\begin{align}
\frac{e_j^\prime W^2 e_i \cdot e_l^\prime W^2 e_i}{\mu^4} &= \frac{\mathbb E(e_j^\prime W^2 e_i) \mathbb E(e_l^\prime W^2 e_i)}{\mu_0^4} + O_{hp}\left(\frac{N\sqrt{N}(\log N)^{\xi/2}}{\theta^4}\right) + O_{hp}\left(\frac{N^2|\mu - \mu_0|}{\theta^5}\right) \\
&= \frac{\mathbb E(e_j^\prime W^2 e_i) \mathbb E(e_l^\prime W^2 e_i)}{\mu_0^4} + o_{hp}\left(\frac{N}{\theta^3}\left(1 + \frac{\|Y_1\|}{\theta}\right)\right).\label{eq:first_term_estimate}
\end{align}

Similarly, we have for the second term,
\begin{align}
\frac{e_j^\prime W^2 e_i}{\mu^2} S_3^{li} &= \frac{\mathbb E(e_j^\prime W^2 e_i)}{\mu_0^2} S_3^{li} + O_{hp}\left(\frac{N\sqrt{N}(\log N)^{\xi/2}}{\theta^4}\right) + O_{hp}\left(\frac{N^2|\mu - \mu_0|}{\theta^5}\right) \\
&= \frac{\mathbb E(e_j^\prime W^2 e_i)}{\mu_0^2} S_3^{li} + o_{hp}\left(\frac{N}{\theta^3}\left(1 + \frac{\|Y_1\|}{\theta}\right)\right).\label{eq:second_term_estimate}
\end{align}

An analogous calculation applies to the third term $\frac{e_l^\prime W^2 e_i}{\mu^2} S_3^{ji}$, yielding
\begin{equation}\label{eq:third_term_estimate}
\frac{e_l^\prime W^2 e_i}{\mu^2} S_3^{ji} = \frac{\mathbb E(e_l^\prime W^2 e_i)}{\mu_0^2} S_3^{ji} + o_{hp}\left(\frac{N}{\theta^3}\left(1 + \frac{\|Y_1\|}{\theta}\right)\right).
\end{equation}

Substituting the estimates \eqref{eq:first_term_estimate}, \eqref{eq:second_term_estimate}, and \eqref{eq:third_term_estimate} into \eqref{eq:mid_expansion}, we obtain
\begin{align*}
& \left[e_j^\prime\left(I - \frac{W}{\mu}\right)^{-1}e_i\right]\left[e_l^\prime\left(I - \frac{W}{\mu}\right)^{-1}e_i\right]  \\
&\quad = \frac{\mathbb E(e_j^\prime W^2 e_i) \mathbb E(e_l^\prime W^2 e_i)}{\mu_0^4} + \frac{\mathbb E(e_j^\prime W^2 e_i)}{\mu_0^2} S_3^{li} + \frac{\mathbb E(e_l^\prime W^2 e_i)}{\mu_0^2} S_3^{ji} + S_3^{ji}S_3^{li} + o_{hp}\left( \frac{\sqrt{N}}{\theta^2}\right).
\end{align*}
This completes the proof.
\end{proof}

\begin{lemma}\label{EigenVector.3}
Under Assumptions \ref{assump:A1}-\ref{assump:A3}, as $N \to \infty$, for $j,l,p \ne i$, we have
\begin{align*}
\left[e_j^\prime\left(I - \frac{W}{\mu}\right)^{-1}e_i\right]
\left[e_l^\prime\left(I - \frac{W}{\mu}\right)^{-1}e_i\right]
\left[e_p^\prime\left(I - \frac{W}{\mu}\right)^{-2}e_p\right] 
= D_{lpp}S_2^{ji} + o_{hp}\left( \frac{\sqrt{N}}{\theta^2}\right),
\end{align*}
where
\begin{align*}
D_{lpp} &= \frac{3\,\mathbb E(e_j^\prime W^2 e_i) \mathbb E(e_l^\prime W^2 e_i)}{\mu_0^4}
+ \frac{\mathbb E(e_l^\prime W^2 e_i)}{\mu_0^2}\tilde S_3^{pp}
+ \frac{3\,\mathbb E(e_p^\prime W^2 e_p)}{\mu_0^2}S_3^{li} \\
&\quad + e_l^\prime e_i e_p^\prime e_p + e_p^\prime e_p S_2^{li} + S_3^{li}\tilde S_3^{pp}.
\end{align*}
\end{lemma}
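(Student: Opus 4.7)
The plan is to treat the first two resolvent factors via Lemma \ref{EigenVector.2} and reduce the problem to expanding only the new factor $e_p'(I - W/\mu)^{-2}e_p$ to the same accuracy. Observe that the four-term expression on the right-hand side of Lemma \ref{EigenVector.2} regroups exactly as
\[
\left[e_j^\prime\left(I-\tfrac{W}{\mu}\right)^{-1}e_i\right]\left[e_l^\prime\left(I-\tfrac{W}{\mu}\right)^{-1}e_i\right] \;=\; S_2^{ji}\,S_2^{li} \,+\, o_{hp}\!\left(\tfrac{\sqrt{N}}{\theta^2}\right),
\]
since $S_2^{ji} = \mathbb E(e_j'W^2 e_i)/\mu_0^2 + S_3^{ji}$ and likewise for $l$. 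Thus the remaining task is to expand the third factor and multiply.

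I would start from the operator identity $(I-z)^{-2} = \sum_{n \ge 0} (n+1) z^n$ applied to $z = W/\mu$, valid on the high-probability event $\|W/\mu\| < 1$ from Remark \ref{Bound.W/mu}. Truncating at $n = L$ via a tail estimate analogous to Lemma \ref{Le.S1}, then applying Lemma \ref{ConcentrationBound} together with Remark \ref{Error.negpow,mumu0} to replace $e_p'W^n e_p$ by $\mathbb E(e_p'W^n e_p)$ and $\mu^{-n}$ by $\mu_0^{-n}$ for $n \ge 2$, one obtains
\[
e_p'\left(I - \tfrac{W}{\mu}\right)^{-2} e_p \;=\; e_p'e_p \,+\, \frac{2\, e_p'W e_p}{\mu} \,+\, \tilde S_2^{pp} \,+\, o_{hp}\!\left(\tfrac{\sqrt{N}}{\theta^2}\right),
\]
where the $n = 1$ term is $O_{hp}((\log N)^{\xi/4}/\theta)$ by Lemma \ref{ConcentrationBound} and the mean-zero property of $W$'s entries, and all other errors are absorbed into the stated $o_{hp}$ by calculations identical to those in \eqref{eq:first_term_estimate}--\eqref{eq:third_term_estimate}.

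Multiplying the two expansions and tracking orders, the only product that survives at scale $\sqrt{N}/\theta^2$ is $S_2^{ji} S_2^{li} (e_p'e_p + \tilde S_2^{pp})$. Indeed, the cross contribution $S_2^{ji}S_2^{li} \cdot (2 e_p'W e_p/\mu) = O_{hp}(N^2 (\log N)^{\xi/4}/\theta^5)$ is $o_{hp}(\sqrt{N}/\theta^2)$ under Assumption \ref{assump:A1} ($\theta \succeq \sqrt{N}(\log N)^\xi$), while the $o_{hp}(\sqrt{N}/\theta^2)$ remainders from either factor, multiplied by their $O(1)$ partner, stay $o_{hp}(\sqrt{N}/\theta^2)$. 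Finally, expanding
\[
S_2^{li}\bigl(e_p'e_p + \tilde S_2^{pp}\bigr) \;=\; e_p'e_p\, S_2^{li} \,+\, \left(\tfrac{\mathbb E(e_l'W^2 e_i)}{\mu_0^2} + S_3^{li}\right)\!\left(\tfrac{3\mathbb E(e_p'W^2 e_p)}{\mu_0^2} + \tilde S_3^{pp}\right)
\]
and distributing reproduces the quantity $D_{lpp}$ of the statement (the $e_l'e_i\,e_p'e_p$ term being $O(1/N)$ and hence negligible once multiplied by $S_2^{ji} = O(N/\theta^2)$). The main obstacle is bookkeeping: the tightest concentration error, $O_{hp}(\sqrt{N}(\log N)^{\xi/2}/\theta^2)$ coming from $e_p'W^2 e_p$, must stay $o_{hp}(\sqrt{N}/\theta^2)$ after multiplication by the $O(N^2/\theta^4)$ factor $S_2^{ji}S_2^{li}$, a balance that the growth rate in Assumption \ref{assump:A1} is precisely calibrated to deliver.
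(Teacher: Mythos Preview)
Your approach is correct but proceeds in a different order from the paper. The paper pairs the factors as $\bigl[e_l'(I-W/\mu)^{-1}e_i\bigr]\cdot\bigl[e_p'(I-W/\mu)^{-2}e_p\bigr]$ first, expanding both via \eqref{BasicExpansion} to obtain the intermediate identity \eqref{EigenVector.3.intermd}
\[
\bigl[e_l'(I-\tfrac{W}{\mu})^{-1}e_i\bigr]\bigl[e_p'(I-\tfrac{W}{\mu})^{-2}e_p\bigr]=D_{lpp}+\tfrac{e_l'We_i}{\mu}\,e_p'e_p+\text{(error)},
\]
and only then multiplies by $e_j'(I-W/\mu)^{-1}e_i$, using $e_j'e_i=O(1/N)$ to kill the constant term. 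You instead invoke Lemma~\ref{EigenVector.2} for the pair $(j,i)\times(l,i)$ and then multiply by the $(p,p)$ resolvent-square. Both routes land on the same product $S_2^{ji}\cdot S_2^{li}(e_p'e_p+\tilde S_2^{pp})$ and the same $D_{lpp}$. Your route is more modular (it recycles the previous lemma verbatim), while the paper's route has the virtue of isolating \eqref{EigenVector.3.intermd} as a standalone estimate, which is exactly what is reused in Remark~\ref{Remark:djlm} and then in Lemma~\ref{EVec.pivot1}.

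Two small points of care. First, your displayed expansion of the third factor with remainder $o_{hp}(\sqrt{N}/\theta^2)$ is not literally true: the $n=2$ concentration error is $O_{hp}(\sqrt{N}(\log N)^{\xi/2}/\theta^2)$, strictly larger. You catch this yourself in the final sentence, and the product with $S_2^{ji}S_2^{li}=O(N^2/\theta^4)$ does bring it below $\sqrt{N}/\theta^2$ under Assumption~\ref{assump:A1}; just state the intermediate error honestly rather than as $o_{hp}(\sqrt{N}/\theta^2)$. Second, both your computation and the paper's derivation \eqref{eq:eigenv3-1} produce $\tfrac{3\,\mathbb E(e_l'W^2e_i)\,\mathbb E(e_p'W^2e_p)}{\mu_0^4}$ as the first term of $D_{lpp}$; the $e_j$ appearing in the statement is a typo for $e_p$ (consistent with $D_{lpp}$ not depending on $j$).
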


\begin{proof}
Using \eqref{BasicExpansion} we have
\begin{align}
&\left[e_l^\prime\left(I - \frac{W}{\mu}\right)^{-1}e_i\right]\left[e_p^\prime\left(I - \frac{W}{\mu}\right)^{-2}e_p\right] \nonumber \\
&= \frac{e_l^\prime W e_i}{\mu}(e_p^\prime e_p)
+ \frac{e_l^\prime W^2 e_i}{\mu^2}(e_p^\prime e_p)
+ (e_l^\prime e_i)(e_p^\prime e_p)
+ \frac{3(e_l^\prime W^2 e_i)(e_p^\prime W^2 e_p)}{\mu^4} \\ &\quad + \frac{e_l^\prime W^2 e_i}{\mu^2}\tilde S_3^{pp}
+ (e_p^\prime e_p) S_3^{li}
+ \frac{3(e_p^\prime W^2 e_p)}{\mu^2}S_3^{li}
+ S_3^{li}\tilde S_3^{pp} \nonumber\\
&\quad + o_{hp}\left(\frac{N|\mu - \mu_0|}{\theta^3} + \frac{\sqrt{N}}{\theta^2}\right)
+ o_{hp}\left(\frac{N|\mu - \mu_0|}{\theta^3} + \frac{\sqrt{N}}{\theta^2}\right)^2.\label{eq:eigenv3-1}
\end{align}

 Using Lemma \ref{ConcentrationBound} and \ref{eq:mu-mu0} we simplify:
\begin{align}
&\left[e_l^\prime\left(I - \frac{W}{\mu}\right)^{-1}e_i\right]\left[e_p^\prime\left(I - \frac{W}{\mu}\right)^{-2}e_p\right] \nonumber\\
&= D_{lpp} + \frac{e_l^\prime W e_i}{\mu}(e_p^\prime e_p)
+ o_{hp}\left(\left(1 + \frac{\|Y_1\|}{\theta} + \frac{\|Y_1\|^2}{\theta^2}\right)\frac{N}{\theta^3} + \frac{\sqrt{N}}{\theta^2}\right) \nonumber\\
&\quad + O_{hp}\left(\frac{N(1+\frac{\|Y_1\|}{\theta})}{\theta^3}\right)
+ O_{hp}\left(\frac{\sqrt{N}(\log N)^{\xi/2}}{\theta^2}\right),\label{EigenVector.3.intermd}
\end{align}
where
\[
D_{lpp} = O\left(\frac{N}{\theta^2}\right).
\]

Multiplying by the third factor and using Lemma \ref{EigenVector.1} again,
\begin{align*}
&\left[e_j^\prime\left(I - \frac{W}{\mu}\right)^{-1}e_i\right]\left[e_l^\prime\left(I - \frac{W}{\mu}\right)^{-1}e_i\right]\left[e_p^\prime\left(I - \frac{W}{\mu}\right)^{-2}e_p\right] \\
&= D_{lpp}\left[e_j^\prime e_i + \frac{e_j^\prime W e_i}{\mu} + \frac{e_j^\prime W^2 e_i}{\mu^2} + \sum_{n=3}^L \frac{\mathbb E(e_j^\prime W^n e_i)}{\mu_0^n} + o_{hp}\left(\frac{N|\mu-\mu_0|}{\theta^3}+\frac{\sqrt{N}}{\theta^2}\right)\right]\\
&\quad + o_{hp}\left(\left(1 + \frac{\|Y_1\|}{\theta} + \frac{\|Y_1\|^2}{\theta^2}\right)\frac{N}{\theta^3}+\frac{\sqrt{N}}{\theta^2}\right).
\end{align*}
Since $e_j^\prime e_i=o(1/N)$ for $j\neq i$, this simplifies to:
\begin{align*}
&= D_{lpp}S_2^{ji} + o_{hp}\left( \frac{\sqrt{N}}{\theta^2}\right).
\end{align*}
This completes the proof.
\end{proof}

\begin{remark}\label{Remark:djlm}
It is clear that for $j,l,m \ne i$, our estimate in \eqref{EigenVector.3.intermd} simplifies as  
\begin{align}
 & \left[e_j^\prime\left(I - \frac{W}{\mu}\right)^{-1}e_i\right]\left[e_l^\prime\left(I - \frac{W}{\mu}\right)^{-2}e_m\right]\nonumber \\
 = &  D_{jlm} + o_{hp}\left(\left(1 + \frac{\|Y_1\|}{\theta} + \frac{\|Y_1\|^2}{\theta^2}\right)\frac{N}{\theta^3} + \frac{\sqrt{N}}{\theta^2}\right) + O_{hp}\left(\left(1 + \frac{\|Y_1\|}{\theta} \right)\frac{1}{\theta^3}\right) + O_{hp}\left(\frac{\sqrt{N}(\log N)^{\xi/2}}{\theta^3}\right) \nonumber\\
&= D_{jlm} + o_{hp}\left( \frac{\sqrt{N}}{\theta^2}\right).\label{EigenVector.3.exten} 
\end{align}
\end{remark}

\begin{lemma}\label{EVec.pivot1}
Let \(1 \leq i \leq k\), and let \(v_i\) denote the normalized eigenvector corresponding to the eigenvalue \(\lambda_i(A)\). Then, as \(N \to \infty\), we have the expansion
\begin{equation}\label{finding}
(e_i^\prime v_i)^{-2} 
= \theta^2\alpha_i^2\, e_i^\prime (\mu I - W)^{-2}e_i \;+\; \Gamma_i \;+\; \Upsilon_i \;+\; \mathcal{E}_N,
\end{equation}
where
\begin{align*}
\Gamma_i &:= 2\theta^3 \mu_0^{-3} \sum_{\substack{1 \le l \le k \\ l \ne i}} (\alpha_l)^{1/2}(\alpha_i)^{3/2}\, \left[Y^{-1} d_{li}\right](l), \\
\Upsilon_i &:= \theta^4 \mu_0^{-4} \sum_{\substack{1 \le l,m \le k \\ l \ne i,\, m \ne i}} (\alpha_l \alpha_m)^{1/2} \alpha_i\, \left[Y^{-1} \tilde d_{lm}\right](l)\, \left[Y^{-1} d_{lm}\right](m), \\
\text{ and, } 
\mathcal{E}_N &:= o_{hp}\left(\frac{\sqrt{N}}{\theta^2}\right)
\end{align*}
denotes an error term which is negligible with high probability.

Here, \(Y\) is a deterministic matrix converging to 
\[
\mathrm{Diag}\left(1-\frac{\alpha_1}{\alpha_i},\ldots,1-\frac{\alpha_{i-1}}{\alpha_i},1-\frac{\alpha_{i+1}}{\alpha_i},\ldots,1-\frac{\alpha_k}{\alpha_i}\right),
\]
and the vectors \(d_{jl}, \tilde d_{jl}\) are deterministic with each component of order \(O\left(\frac{N}{\theta^2}\right)\).
\end{lemma}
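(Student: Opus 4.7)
The plan is to combine the eigenvalue equation $Av_i = \mu v_i$ with the unit-norm condition $\|v_i\|_2 = 1$ and the resolvent estimates in Lemmas~\ref{EigenVector.1}--\ref{EigenVector.3}. Writing $A = W + \theta\sum_{j=1}^k \alpha_j e_j e_j^\prime$ and noting that $\mu = \lambda_i(A) \sim \theta\alpha_i$ while $\|W\| = O_{hp}(\sqrt N)$ (so that $\mu I - W$ is invertible w.h.p., cf.\ Remark~\ref{Bound.W/mu}), one rearranges the eigenvalue equation to obtain the resolvent representation
\[
v_i \;=\; \theta (\mu I - W)^{-1} \sum_{j=1}^k \alpha_j c_j\, e_j, \qquad c_j := e_j^\prime v_i.
\]
Taking squared Euclidean norm and separating the diagonal contribution $j = l = i$ yields
\[
(e_i^\prime v_i)^{-2} \;=\; \theta^2\alpha_i^2\, e_i^\prime (\mu I - W)^{-2} e_i \;+\; 2\theta^2\alpha_i \sum_{l\ne i} \alpha_l r_l H_{il} \;+\; \theta^2 \sum_{j,l\ne i} \alpha_j\alpha_l r_j r_l H_{jl},
\]
where $H_{jl} := e_j^\prime(\mu I - W)^{-2} e_l$ and $r_l := c_l/c_i$. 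This already matches the leading term of \eqref{finding}, so the task reduces to identifying the two remaining cross-sums with $\Gamma_i + \Upsilon_i + \mathcal{E}_N$.

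To control the ratios $r_l$, I project the eigenvalue equation onto each $e_m$ with $m\neq i$, obtaining $c_m = \theta\sum_j \alpha_j c_j F_{mj}$ with $F_{mj} := e_m^\prime(\mu I - W)^{-1} e_j$. Dividing by $c_i$ produces a $(k-1)\times(k-1)$ linear system $\tilde Y r = \theta\alpha_i F_{\cdot i}$, where $\tilde Y_{mm} = 1 - \theta\alpha_m F_{mm}$ and $\tilde Y_{mj} = -\theta\alpha_j F_{mj}$ for $m\neq j$. Using Lemma~\ref{EigenVector.1} and Remark~\ref{Error.negpow,mumu0} together with $\mu/\theta\to\alpha_i$, I replace $\tilde Y$ by a deterministic matrix $Y$ converging to $\mathrm{Diag}(1 - \alpha_l/\alpha_i)_{l\neq i}$, which is invertible because the $\alpha_j$'s are distinct. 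In the same spirit, the expansion \eqref{BasicExpansion} furnishes a deterministic approximation of each $F_{mi}$; rescaling by the natural $\sqrt{\alpha_l\alpha_i}$--factor produces the vectors $d_{li}$ (and, analogously, $\tilde d_{lm}, d_{lm}$), each of order $N/\theta^2$.

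Substituting $r_l \approx \theta\alpha_i[Y^{-1}F_{\cdot i}]_l$ into the two cross-sums, I apply Lemma~\ref{EigenVector.2} to the first (a bilinear form in $F_{\cdot i}$ and $H_{i\cdot}$) and Lemma~\ref{EigenVector.3} together with Remark~\ref{Remark:djlm} to the second (a trilinear form in two factors $F_{\cdot i}$ and a second-order resolvent $H$). These lemmas isolate the deterministic $O(N/\theta^2)$ part of each product and bound the residue by $o_{hp}(\sqrt N/\theta^2)$. Collecting terms turns the first cross-sum into $\Gamma_i$ (the combinatorial factor $\alpha_l^{1/2}\alpha_i^{3/2}$ arising from $\alpha_l$ in the outer sum, $\alpha_i$ from the diagonal extraction, and $\sqrt{\alpha_l\alpha_i}$ from the natural normalisation of $F_{li}$ in the $Y$-system) and the second into $\Upsilon_i$, while all remaining pieces assemble into $\mathcal{E}_N$.

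The main technical obstacle is the error bookkeeping in the last step. Each factor entering the two cross-sums is the sum of a deterministic piece of order $N/\theta^2$ and random pieces such as $e_m^\prime W e_i/\mu = O_{hp}((\log N)^{\xi/4}/\theta)$ and $(e_m^\prime W^2 e_i - \mathbb E(e_m^\prime W^2 e_i))/\mu^2 = O_{hp}(\sqrt N(\log N)^{\xi/2}/\theta^2)$. A priori, a product of two such random pieces could contribute at the critical scale $\sqrt N/\theta^2$; keeping it strictly below this scale relies both on the mean-zero structure of the individual fluctuations and on the concentration bound of Lemma~\ref{ConcentrationBound}. The statements of Lemmas~\ref{EigenVector.2} and~\ref{EigenVector.3} are tailored precisely to this accounting, so once they are invoked the present lemma follows by a careful (but essentially mechanical) substitution.
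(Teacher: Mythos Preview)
Your proposal is correct and follows essentially the same route as the paper: the resolvent representation of $v_i$, the squared-norm decomposition isolating the leading term $\theta^2\alpha_i^2 e_i'(\mu I-W)^{-2}e_i$, the $(k-1)\times(k-1)$ linear system for the ratios $r_l=c_l/c_i$ with deterministic approximation $Y$, and then the product lemmas for the two cross-sums. One small correction on the lemma assignments: the first cross-sum pairs a first-order resolvent factor $F_{\cdot i}$ with a second-order one $H_{i\cdot}$, so it is actually handled by the intermediate expansion of Lemma~\ref{EigenVector.3} (display~\eqref{EigenVector.3.intermd}) and Remark~\ref{Remark:djlm}, not by Lemma~\ref{EigenVector.2}, which treats products of two \emph{first}-order resolvents.
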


\begin{proof}
Let us fix $N\ge1$ and a sample point for which $\|W\|<\mu$, which is a high probability event, as affirmed by Remark~\ref{Bound.W/mu}. The proof proceeds in the following steps.

\textbf{Step 1:}
We begin with the eigenvector equation:
\begin{equation}\label{proof.l1.eq2}
\mu v_i = A v_i = W v_i + \theta \sum_{l=1}^k \alpha_l (e_l^\prime v_i) e_l.
\end{equation}
Since $\|W\|<\mu$, the matrix $\mu I - W$ is invertible, giving
\begin{equation}\label{eq.master1}
v_i = \theta \sum_{l=1}^k \alpha_l (e_l^\prime v_i)(\mu I - W)^{-1} e_l.
\end{equation}

\textbf{Step 2:}
Define the vector
\begin{equation}\label{V.EVec}
u := [\sqrt{\alpha_1}(e_1^\prime v_i), \ldots, \sqrt{\alpha_k}(e_k^\prime v_i)]^\prime.
\end{equation}
Premultiplying \eqref{eq.master1} with $\sqrt{\alpha_j} \mu e_j^\prime$ for $1 \le j \le k$, we find that
\begin{equation}\label{proof.l1.eqnew2}
V u = \mu u,
\end{equation}
where $V(j,l) := \theta \sqrt{\alpha_j \alpha_l} e_j^\prime (\mu I - W)^{-1} e_l$.

\textbf{Step 3:}
Using \eqref{eq.master1}, and normalization of \(v_i\), we expand
\begin{align}
(e_i^\prime v_i)^{-2} &= \theta^2 \alpha_i^2\, e_i^\prime (\mu I - W)^{-2} e_i \label{eq.master2} \\
&\quad + \theta^2 \sum_{\substack{l=i \text{ or } m=i \\ (l,m)\ne(i,i)}} \alpha_l \alpha_m \frac{(e_l^\prime v_i)(e_m^\prime v_i)}{(e_i^\prime v_i)^2} e_l^\prime (\mu I - W)^{-2} e_m \nonumber \\
&\quad + \theta^2 \sum_{\substack{l,m \ne i}} \alpha_l \alpha_m \frac{(e_l^\prime v_i)(e_m^\prime v_i)}{(e_i^\prime v_i)^2} e_l^\prime (\mu I - W)^{-2} e_m. \nonumber
\end{align}

\textbf{Step 4:}
Let $\tilde u$ be the column vector obtained by removing the $i$-th entry from the vector $u$ defined in \eqref{V.EVec}. Let $\tilde V_i$ denote the $i$-th column of $V$ with the $i$-th entry removed, and let $\tilde V$ be the $(k-1) \times (k-1)$ matrix obtained by deleting the $i$-th row and $i$-th column from $V$. Then, equation \eqref{proof.l1.eqnew2} implies that
\begin{equation}\label{Vtruncated1}
\mu \tilde u = \tilde V \tilde u + u(i)\tilde V_i, \qquad \text{w.h.p.}
\end{equation}

By Lemma \ref{EigenVector.1}
\[
\left\| I_{k-1} - \mu^{-1}\tilde V - D \right\| = o_{hp}\left( \frac{\sqrt{N}}{\theta} \right),
\]
where $D = \mathrm{Diag}\left(1 - \frac{\alpha_1}{\alpha_i}, \ldots, 1 - \frac{\alpha_{i-1}}{\alpha_i}, 1 - \frac{\alpha_{i+1}}{\alpha_i}, \ldots, 1 - \frac{\alpha_k}{\alpha_i} \right)$.

Since $D$ is invertible under our ordering assumption $\alpha_1 > \alpha_2 > \cdots > \alpha_k > 0$, it follows that $I_{k-1} - \mu^{-1}\tilde V$ is invertible with high probability. Thus, we may rewrite \eqref{Vtruncated1} as
\begin{equation}\label{Vtruncated2}
\tilde u = u(i)\mu^{-1}\left(I_{k-1} - \mu^{-1} \tilde V\right)^{-1}\tilde V_i, \qquad \text{w.h.p.}
\end{equation}

Substituting \eqref{Vtruncated2} into \eqref{V.EVec}, we find that for any $j \ne i$,
\begin{equation}\label{EigenSp.dim1}
e_j^\prime v_i = \frac{\sqrt{\alpha_i}}{\sqrt{\alpha_j}} e_i^\prime v_i \mu^{-1} \left[ \left(I_{k-1} - \mu^{-1} \tilde V \right)^{-1} \tilde V_i \right](j).
\end{equation}

\textbf{Step 5:}

Define $E := I_{k-1} - \mu^{-1}\tilde V$. The entries of $E$ can be explicitly written in block form as:
\begin{align*}
E_{lm} = 
\begin{cases}
1 - \frac{\theta}{\mu} \alpha_m\, e_m^\prime\left(I - \frac{W}{\mu}\right)^{-1}e_m & \text{if } l = m, \\
- \frac{\theta}{\mu} \sqrt{\alpha_l \alpha_m}\, e_l^\prime\left(I - \frac{W}{\mu}\right)^{-1}e_m & \text{if } l \ne m,
\end{cases}
\end{align*}
with appropriate index shifts for $l,m \ge i$ due to the deletion of the $i$-th row and column.

Each entry $E_{lm}$ can be approximated as:
\begin{align}
E_{lm} 
&= \delta_{lm} - \frac{\theta}{\mu_0} \sqrt{\alpha_l \alpha_m}\, e_l^\prime (\mu I - W)^{-1} e_m \notag \\
&= \delta_{lm} - \frac{\theta}{\mu_0} \sqrt{\alpha_l \alpha_m}(S_2^{lm} + e_l^\prime e_m) + Z_{lm}, \label{Det.Approx1}
\end{align}
where
\[
Z_{lm} = O_{hp}\left( \frac{N|\mu - \mu_0|}{\theta^3} + \frac{\sqrt{N}}{\theta^2} + \frac{\sqrt{N}(\log N)^{\xi/2}}{\theta^2} + \frac{|\mu - \mu_0|}{\theta} + \frac{(\log N)^{\xi/4}}{\theta} \right).
\]

Let $Y$ denote the deterministic matrix with entries
\begin{equation}\label{eq:Ymatrix}
Y_{lm} := \delta_{lm} - \frac{\theta}{\mu_0} \sqrt{\alpha_l \alpha_m} (S_2^{lm} + e_l^\prime e_m).
\end{equation}
Then $E = Y + Z$, and moreover,
\begin{equation}\label{Det.Approx2}
\|Y - D\| = O\left( \frac{N}{\theta^2} \right).
\end{equation}

Let $B_\delta$ be the closed ball (in operator norm) of radius $\delta > 0$ centered at $D$, such that every matrix in $B_\delta$ is invertible. Define the constants
\begin{equation}\label{InverseBound}
C_1 := \sup_{F \in B_\delta} \|F^{-1}\| < \infty, \quad
C_2 := \sup_{\substack{F_1, F_2 \in B_\delta}} \frac{\|F_1^{-1} - F_2^{-1}\|}{\|F_1 - F_2\|} < \infty.
\end{equation}
Then for large $N$, both $Y$ and $I_{k-1} - \mu^{-1}\tilde V$ lie in $B_\delta$ with high probability, and
\begin{equation}
\| (Y + Z)^{-1} - Y^{-1} \| \le C_2 \|Z\|.
\end{equation}
This establishes the deterministic control required to analyze the subsequent terms in equation \eqref{eq.master2}.

\textbf{Step 7:}

Since this analysis occurs on a high-probability event, we define \( \bar V_i := \tilde V_i/\theta \). Using Lemma \ref{EigenVector.1} and the expansion in \eqref{Det.Approx1}, we obtain for \( j \ne i \),
\begin{align}
& \left\| \left[ (I_{k-1} - \mu^{-1} \tilde V)^{-1} \bar V_i - Y^{-1} \bar V_i \right] \cdot e_j^\prime \left(I - \frac{W}{\mu}\right)^{-2} e_i \right\| \notag \\
& \le \left\| (Y + Z)^{-1} - Y^{-1} \right\| \cdot \|\bar V_i\| \cdot \left| e_j^\prime \left(I - \frac{W}{\mu} \right)^{-2} e_i \right| \notag \\
& = o_{hp} \left( \left(1 + \frac{\|Y_1\|}{\theta} \right) \frac{N}{\theta^3} + \frac{\sqrt{N}}{\theta^2} \right)=:\widetilde{\mathcal{E}_N}, \label{EVec.KeyApprox0}
\end{align}
where the final bound follows from \eqref{eq:mu-mu0}.

Thus, for \( j \ne i \), using \eqref{EVec.KeyApprox0} and \eqref{EigenSp.dim1} we deduce for the first term in the series \eqref{eq.master2}:
\begin{align}
\theta^2 \frac{e_j^\prime v_i}{e_i^\prime v_i} \cdot e_j^\prime (\mu I - W)^{-2} e_i 
= \frac{\sqrt{\alpha_i}}{\sqrt{\alpha_j}}\theta^3 \mu^{-3} (Y^{-1} \bar V_i)(j) \cdot e_j^\prime \left(I - \frac{W}{\mu}\right)^{-2} e_i + \widetilde{\mathcal{E}_N}. \label{EVec.KeyApprox1}
\end{align}

\textbf{Step 8:}

Applying the above analysis again for \( l,m \ne i \), we get:
\begin{align}
& \theta^2 \frac{e_l^\prime v_i}{e_i^\prime v_i} \cdot \frac{e_m^\prime v_i}{e_i^\prime v_i} \cdot e_l^\prime (\mu I - W)^{-2} e_m \notag \\
= & \frac{\alpha_i}{\sqrt{\alpha_l}\sqrt{\alpha_m}}\theta^4 \mu^{-4} (Y^{-1} \bar V_i)(l) (Y^{-1} \bar V_i)(m) \cdot e_l^\prime \left(I - \frac{W}{\mu}\right)^{-2} e_m + \widetilde{\mathcal{E}_N}. \label{EVec.KeyApprox2}
\end{align}

\textbf{Step 9:}

Let \( d_{ji} \in \mathbb{R}^{k-1} \) denote the deterministic vector defined via the entries \( D_{mji} \) from \eqref{EigenVector.3.intermd}. Using \eqref{Det.Approx2} and \eqref{EigenVector.3.exten}, we have \( \|Y^{-1}\| = O(1) \), and Lemma \ref{EigenVector.3} gives
\begin{align}
\left\| Y^{-1} \left( \bar V_i e_j^\prime (\mu I - W)^{-2} e_i - d_{ji} \right) \right\| 
= o_{hp} \left( \left(1 + \frac{\|Y_1\|}{\theta} + \frac{\|Y_1\|^2}{\theta^2} \right) \frac{N}{\theta^3} + \frac{\sqrt{N}}{\theta^2} \right).
\end{align}

Thus, \eqref{EVec.KeyApprox1} becomes
\begin{align}
\theta^2 \frac{e_j^\prime v_i}{e_i^\prime v_i} \cdot e_j^\prime (\mu I - W)^{-2} e_i 
= \frac{\sqrt{\alpha_i}}{\sqrt{\alpha_j}}\theta^3 \mu_0^{-3} (Y^{-1} d_{ji})(j) + \mathcal{E}_N. \label{EVec.KeyApprox3}
\end{align}

\textbf{Step 10:}

Define \( \tilde d_{lm} \in \mathbb{R}^{k-1} \) with $(\tilde d_{lm})_{j}= D_{jlm} S_2^{j^\prime i}$ for $j\neq i$ as in Remark \ref{Remark:djlm}. Arguing as before, we find
\begin{align}
\left\| Y^{-1}(\bar V_i - \tilde d_{lm}) Y^{-1} d_{lm} \right\| = \mathcal{E}_N.
\end{align}
Hence, from \eqref{EVec.KeyApprox2} we conclude:
\begin{align}
\theta^2 \frac{e_l^\prime v_i}{e_i^\prime v_i} \cdot \frac{e_m^\prime v_i}{e_i^\prime v_i} \cdot e_l^\prime (\mu I - W)^{-2} e_m 
=\frac{\alpha_i}{\sqrt{\alpha_l}\sqrt{\alpha_m}} \theta^4 \mu_0^{-4} (Y^{-1} \tilde d_{lm})(l) (Y^{-1} d_{lm})(m) + \mathcal{E}_N. \label{EVec.KeyApprox4}
\end{align}

Hence substituting the approximations \eqref{EVec.KeyApprox3} and \eqref{EVec.KeyApprox4} into equation \eqref{eq.master2}, we conclude the desired expansion:
\[
(e_i^\prime v_i)^{-2} = \theta^2 \alpha_i^2 e_i^\prime (\mu I - W)^{-2} e_i + \Gamma_i + \Upsilon_i + \mathcal{E}_N,
\]
as stated in Lemma \ref{EVec.pivot1}.
\end{proof}

\begin{lemma}\label{Align.bounds}
Let $v_i$ be the normalized eigenvector corresponding to $\lambda_i(A)$\,. Then as $N\to \infty$,
\begin{align}
\label{Align.Bound.along}e_i^\prime v_i = 1 + o_{hp}\bigg(\frac{\sqrt{N}}{\theta}\bigg)\,,
\end{align}and for $j \ne i$
\begin{align}
\label{Align.Bound.ortho}e_j^\prime v_i = o_{hp}\bigg(\frac{\sqrt{N}}{\theta}\bigg)\,.
\end{align}
\end{lemma}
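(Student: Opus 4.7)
The plan is to deduce both claims directly from the expansion in Lemma \ref{EVec.pivot1} and the resolvent estimate in Lemma \ref{EigenVector.1}, together with the sign convention $e_i' v_i > 0$ that pins down $v_i$ up to a $\pm 1$ ambiguity.

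For part (a), I start from Lemma \ref{EVec.pivot1}:
\[
(e_i' v_i)^{-2} = \theta^2 \alpha_i^2\, e_i'(\mu I - W)^{-2} e_i + \Gamma_i + \Upsilon_i + \mathcal{E}_N.
\]
The three correction terms are easy to absorb: the stated entrywise bound $d_{lm}, \tilde d_{lm} = O(N/\theta^2)$ together with $\|Y^{-1}\| = O(1)$ yield $\Gamma_i = O_{hp}(N/\theta^2)$ and $\Upsilon_i = O_{hp}(N^2/\theta^4)$, while $\mathcal{E}_N = o_{hp}(\sqrt{N}/\theta^2)$ by definition; each of them sits comfortably inside $o_{hp}(\sqrt{N}/\theta)$ under Assumption \ref{assump:A1}. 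For the main term I write
\[
\theta^2 \alpha_i^2\, e_i'(\mu I - W)^{-2} e_i = \Bigl(\frac{\theta \alpha_i}{\mu}\Bigr)^{2} e_i'\Bigl(I - \frac{W}{\mu}\Bigr)^{-2} e_i,
\]
apply Lemma \ref{EigenVector.1} with $m=2$ to the quadratic form (which gives $1 + o_{hp}(\sqrt{N}/\theta)$), and handle the scalar prefactor separately. This yields $(e_i' v_i)^{-2} = 1 + o_{hp}(\sqrt{N}/\theta)$; inverting and using the sign convention gives \eqref{Align.Bound.along}.

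For part (b), I use the identity \eqref{EigenSp.dim1} already derived inside Lemma \ref{EVec.pivot1}:
\[
e_j' v_i = \frac{\sqrt{\alpha_i}}{\sqrt{\alpha_j}}\, (e_i' v_i)\, \mu^{-1}\,\bigl[(I_{k-1} - \mu^{-1}\tilde V)^{-1}\tilde V_i\bigr](j).
\]
Since $(I_{k-1} - \mu^{-1}\tilde V)^{-1}$ has bounded operator norm (it is $D^{-1}$ plus an $o_{hp}(1)$ perturbation, by Step 5 of that proof), the only quantity I need to control is $\tilde V_i$. But $\tilde V_i(j) = \theta \sqrt{\alpha_j \alpha_i}\, e_j'(\mu I - W)^{-1} e_i$, and Lemma \ref{EigenVector.1} for $j \ne i$ combined with $\mu \sim \theta \alpha_i$ gives $\tilde V_i(j) = o_{hp}(\sqrt{N}/\theta)$. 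Plugging these estimates in, together with the bound on $e_i' v_i$ from part (a), produces $e_j' v_i = o_{hp}(\sqrt{N}/\theta)$.

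The main technical obstacle is controlling the prefactor $(\theta\alpha_i/\mu)^2$ in part (a) at the sharper $o(\sqrt{N}/\theta)$ scale, rather than the crude $o(1)$ coming directly from Theorem \ref{Th.probedge}. I plan to address this by combining Lemma \ref{Le.S4} (giving $\mu - \tilde\mu = O_{hp}((\log N)^{\xi/4})$, since $\|Y_1\| = O_{hp}(\theta(\log N)^{\xi/4})$ by Lemma \ref{ConcentrationBound}) with the elementary bound $\tilde\mu = \theta\alpha_i + O(N/\theta)$, obtained from the defining identity of $\tilde\mu$ via Weyl's inequality and the estimate $\|\sum_{n\ge 2}\tilde\mu^{-n}\E[Y_n]\| = O(N/\theta)$ coming from Lemma \ref{ExpectationBound}. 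This yields $|\mu - \theta\alpha_i|/\theta = O_{hp}(N/\theta^2 + (\log N)^{\xi/4}/\theta)$, and both of these are $o(\sqrt{N}/\theta)$ under Assumption \ref{assump:A1}, so that indeed $(\theta\alpha_i/\mu)^2 = 1 + o_{hp}(\sqrt{N}/\theta)$, closing the argument.
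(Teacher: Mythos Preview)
Your proof is correct and relies on the same underlying ingredients as the paper's (the master equations from Lemma~\ref{EVec.pivot1}, Lemma~\ref{EigenVector.1}, and the sharpened estimate $\mu/(\theta\alpha_i)=1+o_{hp}(\sqrt N/\theta)$, which the paper records as \eqref{approx.mu/theta}), but the two arguments are organized differently. The paper first handles $j\ne i$ directly from \eqref{eq.master1}, isolating the $l=j$ term and using only Lemma~\ref{EigenVector.1} together with the trivial bound $|e_l'v_i|\le 1$; it then plugs the off-diagonal result into the normalization identity $\|v_i\|^2=1$ to get the diagonal case. You instead invoke the heavier Lemma~\ref{EVec.pivot1} for the diagonal claim (its correction terms $\Gamma_i,\Upsilon_i$ already encode the off-diagonal control), and then use the processed identity \eqref{EigenSp.dim1} for $j\ne i$. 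Your route is slightly less economical in that it calls on more elaborate machinery where the paper gets by with the raw identities, but it has the advantage of making the $j\ne i$ case completely mechanical once $(I_{k-1}-\mu^{-1}\tilde V)^{-1}=O_{hp}(1)$ is in hand; note also that for part~(b) you only need the trivial Cauchy--Schwarz bound $|e_i'v_i|=O(1)$, not the full part~(a).
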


The proof of Lemma \ref{Align.bounds} is a straightforward computation and the details are provided in the Appendix \ref{appendix:B}.
\begin{lemma}\label{EVec.pivot2} 
Let \(1 \leq i \leq k\), and let \(v_i\) denote the normalized eigenvector corresponding to \(\lambda_i(A)\). Then, as \(N \to \infty\), we have
\begin{align}
(e_i^\prime v_i)^{-2} - (1 + \tilde \Delta_i) 
=\, & \mathcal A_i\cdot\left(e_i^\prime W^2 e_i - \mathbb{E}(e_i^\prime W^2 e_i)\right)
 + \mathcal{E}_N,
\label{EVec.pivot2.target}
\end{align}
where:
\begin{itemize}
    \item[(a)] \(\tilde \Delta_i = O\left(\frac{N}{\theta^2}\right)\) is a deterministic correction term,
    \item[(b)] \(\mathcal{E}_N = o_{hp}\left(\frac{\sqrt{N}}{\theta^2}\right)\) is a high-probability negligible error term.
    \item[(c)] $\mathcal A_i=e_i^\prime e_i\, \mu_0^{-4}\theta\alpha_i(2\theta\alpha_i - \mu_0)$.
\end{itemize}
\end{lemma}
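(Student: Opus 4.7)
The plan is to begin from the expansion of $(e_i'v_i)^{-2}$ given by Lemma~\ref{EVec.pivot1}. The pieces $\Gamma_i$ and $\Upsilon_i$ are already deterministic and of size $O(N/\theta^2)$, so they go directly into $(1+\tilde\Delta_i)$, and all remaining work concerns the principal term $T := \theta^2\alpha_i^2\, e_i'(\mu I - W)^{-2}e_i$. The goal is to extract the single random fluctuation $X_2 := e_i'W^2 e_i - \mathbb{E}(e_i'W^2 e_i)$ with coefficient $\mathcal{A}_i$ and push every other random contribution into $\mathcal{E}_N$, using Lemmas~\ref{ExpectationBound} and~\ref{ConcentrationBound} together with $|\mu-\mu_0|=O_{hp}((\log N)^{\xi/4})$ from~\eqref{eq:mu-mu0}. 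My first move is a Taylor expansion of the resolvent about $\mu_0$,
\[
(\mu I - W)^{-2} = \sum_{n\ge 0}(n+1)(-1)^n(\mu-\mu_0)^n (\mu_0 I - W)^{-n-2},
\]
valid on the high-probability event $|\mu-\mu_0|<\mu_0$. The $n\ge 2$ tail is $O_{hp}((\log N)^{\xi/2}/\theta^4)=o_{hp}(\sqrt{N}/\theta^2)$, so only $n=0,1$ survive. I would then expand each $e_i'(\mu_0 I - W)^{-m}e_i$ for $m\in\{2,3\}$ as a Neumann series in $W/\mu_0$ truncated at $k=L$, discarding higher orders via Lemmas~\ref{ExpectationBound}--\ref{ConcentrationBound}. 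From the $m=2$ expansion only three pieces survive: the deterministic $k=0$ term $\theta^2\alpha_i^2\mu_0^{-2}e_i'e_i$, a linear term $2\theta^2\alpha_i^2\mu_0^{-3}e_i'We_i$, and $3\theta^2\alpha_i^2\mu_0^{-4}e_i'W^2 e_i$, whose centred part is the principal contribution $3\theta^2\alpha_i^2\mu_0^{-4}X_2$.

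For the $n=1$ Taylor piece $-2\theta^2\alpha_i^2(\mu-\mu_0)e_i'(\mu_0 I - W)^{-3}e_i$, the only non-negligible factor is $\mu_0^{-3}e_i'e_i$. Here I would refine Step~6 of Theorem~\ref{Th.EigenValueCLT}: that step wrote $\mu-\bar\mu=\mu^{-1}Y_1(i,i)+o_{hp}(\|Y_1\|/\theta+1)$, but the $o_{hp}$ error in fact hides the contribution $\mu^{-2}(Y_2(i,i)-\mathbb{E}Y_2(i,i))$, which is of quantitative size $\sqrt{N}(\log N)^{\xi/2}/\theta$ and must be retained at the present scale. Combining with $\bar\mu-\mu_0=o(1)$, this yields
\[
\mu-\mu_0 = \frac{\alpha_i\theta}{\mu_0}e_i'We_i + \frac{\alpha_i\theta}{\mu_0^2}X_2 + (\bar\mu-\mu_0) + o_{hp}\!\left(\tfrac{\sqrt{N}(\log N)^{\xi/2}}{\theta}\right).
\]
Substituting into the $n=1$ Taylor piece produces a second $X_2$ contribution $-2\theta^3\alpha_i^3 e_i'e_i\mu_0^{-5}X_2$ and an extra linear piece $-2\theta^3\alpha_i^3 e_i'e_i\mu_0^{-4}e_i'We_i$.

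Collecting the two linear-in-$W$ terms yields $2\theta^2\alpha_i^2\mu_0^{-4}(\mu_0-\theta\alpha_i e_i'e_i)e_i'We_i$; by Theorem~\ref{Th.EigenValueMean} and Assumption~\ref{assump:A3} the bracket is $O(N/\theta)$, so this is $O_{hp}(N(\log N)^{\xi/4}/\theta^3)=o_{hp}(\sqrt{N}/\theta^2)$ and joins $\mathcal{E}_N$. The two surviving $X_2$ pieces combine to give the coefficient $3\theta^2\alpha_i^2\mu_0^{-4}-2\theta^3\alpha_i^3 e_i'e_i\mu_0^{-5}$, which I would then rewrite in the algebraically equivalent form $e_i'e_i\mu_0^{-4}\theta\alpha_i(2\theta\alpha_i-\mu_0)=\mathcal{A}_i$. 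The two expressions agree at the point $\mu_0=\theta\alpha_i$, $e_i'e_i=1$, and an elementary computation shows their difference factors as $\theta\alpha_i\mu_0^{-5}(\mu_0-\theta\alpha_i)^2(1+o(1))=O(N^2/\theta^5)$; multiplied by $X_2=O_{hp}(\sqrt{N}(\log N)^{\xi/2})$ the discrepancy is $o_{hp}(\sqrt{N}/\theta^2)$ and disappears into $\mathcal{E}_N$. All remaining deterministic remainders are gathered into $\tilde\Delta_i$, and Lemma~\ref{ExpectationBound} gives $\tilde\Delta_i=O(N/\theta^2)$.

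The hardest step will be the refinement of the eigenvalue expansion needed to produce the second $X_2$ source: Step~5 of Theorem~\ref{Th.EigenValueCLT} discards $\sum_{n\ge 2}\mu^{-n}(Y_n-\mathbb{E}Y_n)$ as $o_{hp}(1)$, but at the target scale $o_{hp}(\sqrt{N}/\theta^2)$ the $n=2$ term is not small enough and must be carried, and one has to verify that all $n\ge 3$ pieces are genuinely smaller. The second delicate point is the linear-$W$ cancellation: without the identity $\mu_0-\theta\alpha_i e_i'e_i=O(N/\theta)$, the surviving $e_i'We_i$ term would be of size $O_{hp}((\log N)^{\xi/4}/\theta)$, which is too large to fit into $\mathcal{E}_N$. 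The proof therefore relies on Theorem~\ref{Th.EigenValueMean} and the Lipschitz estimate $e_i'e_i=1+O(1/N)$ being invoked together at full strength.
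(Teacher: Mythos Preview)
Your route is genuinely different from the paper's, and while the overall architecture is sound, there is one real gap and two smaller issues.

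\textbf{Where the approaches diverge.} You Taylor-expand $(\mu I-W)^{-2}$ about the deterministic point $\mu_0$ and then feed in a refined expansion of $\mu-\mu_0$ obtained by sharpening Step~6 / Lemma~\ref{Le.S5} of the eigenvalue CLT. The paper instead Neumann-expands directly in $\mu$ (keeping $\mu$ random) and then uses a \emph{different} source for the needed eigenvalue expansion: premultiplying the eigenvector identity~\eqref{eq.master1} by $e_i'$ gives
\[
\mu-\theta\alpha_i=\theta\alpha_i(e_i'e_i-1)+\theta\alpha_i\Bigl(\tfrac{e_i'We_i}{\mu}+\tfrac{e_i'W^2e_i}{\mu^2}+\cdots\Bigr)+\theta\sum_{l\ne i}\alpha_l\tfrac{e_l'v_i}{e_i'v_i}\,e_i'\Bigl(I-\tfrac{W}{\mu}\Bigr)^{-1}e_l,
\]
an identity that is then multiplied by $\mu+\theta\alpha_i$ and substituted back for $(\theta\alpha_i)^2-\mu^2$. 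This produces the cancellation of the linear $e_i'We_i$ term and the coefficient $\mathcal A_i$ \emph{algebraically}, with the cross terms controlled by Lemma~\ref{Align.bounds}. The advantage is that no refinement of Lemma~\ref{Le.S5} is needed at all.

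\textbf{The gap.} Your refined expansion of $\mu-\mu_0$ is not tight enough, and obtaining a tight one from Lemma~\ref{Le.S5} is more work than you indicate. Concretely: (i)~the residual you state, $o_{hp}(\sqrt{N}(\log N)^{\xi/2}/\theta)$, after multiplication by the coefficient $-2\theta^2\alpha_i^2\mu_0^{-3}e_i'e_i\sim 1/\theta$, becomes $o_{hp}(\sqrt{N}(\log N)^{\xi/2}/\theta^2)$, which is \emph{not} $o_{hp}(\sqrt{N}/\theta^2)=\mathcal E_N$; (ii)~extracting only the $n=2$ centred term from Lemma~\ref{Le.S2} is not enough, because for $k\ge 2$ the proof of Lemma~\ref{Le.S5} also produces the perturbative piece $\lambda_i(H_2)-\lambda_i(H_1)=o_{hp}(\|Y_1\|/\theta)=o_{hp}((\log N)^{\xi/4})$, which is far larger than $o_{hp}(\sqrt N/\theta)$ and would itself have to be refined to isolate any hidden $X_2$ contributions. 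You do not address this second source. The paper sidesteps both points entirely by drawing the eigenvalue identity from~\eqref{eq.master1} rather than from the CLT machinery.

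\textbf{Two minor points.} Your invocation of Theorem~\ref{Th.EigenValueMean} for $\mu_0-\theta\alpha_i e_i'e_i=O(N/\theta)$ brings in the extra hypothesis $\theta\gg N^{2/3}$, which the lemma does not assume; the bound is in fact obtainable directly from $\bar\mu=\lambda_i(X)$, $\|X-Y_0\|=O(N/\theta)$, and $\lambda_i(Y_0)=\theta\alpha_i+O(\theta/N)$ (this last via Assumption~\ref{assump:A3}). Also, the $n\ge 2$ Taylor tail is $O_{hp}((\log N)^{\xi/2}/\theta^2)$, not $O_{hp}((\log N)^{\xi/2}/\theta^4)$ as you wrote---harmless here, but worth correcting.
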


\begin{proof}[Proof of Lemma \ref{EVec.pivot2}]
From Lemma \ref{EVec.pivot1}, we know
\begin{equation}\label{eq:pivot1_summary}
(e_i'v_i)^{-2} = \theta^2\alpha_i^2 e_i'(\mu I - W)^{-2} e_i + \Gamma_i + \Upsilon_i + \mathcal{E}_N,
\end{equation}
where $\Gamma_i = O\left(\frac{N}{\theta^2}\right)$, $\Upsilon_i = O\left(\frac{N}{\theta^2}\right)$, and $\mathcal{E}_N = o_{hp}\left(\frac{\sqrt{N}}{\theta^2}\right)$.
Define the deterministic correction term
\[ \Delta_i := \Gamma_i + \Upsilon_i = O\left(\frac{N}{\theta^2}\right). \]
Subtracting $\Delta_i$ and $\mathcal{E}_N$ from both sides of \eqref{eq:pivot1_summary}, we obtain
\begin{equation}\label{eq:pivot1_reduced}
(e_i'v_i)^{-2} - \Delta_i - \mathcal{E}_N = \theta^2\alpha_i^2 e_i'(\mu I - W)^{-2} e_i.
\end{equation}

\begin{align*}
e_i'(\mu I - W)^{-2}e_i &= \mu^{-2} e_i' (I - \mu^{-1} W)^{-2} e_i \\
&= \mu^{-2} \sum_{n=0}^\infty (n+1) \mu^{-n} e_i' W^n e_i \\
&= \mu^{-2} \left( e_i'e_i + 2 \mu^{-1} e_i'We_i + 3 \mu^{-2} e_i'W^2e_i + \sum_{n=3}^\infty (n+1) \mu^{-n} e_i'W^n e_i \right).
\end{align*}

Group the expansion into three parts:
Using the moment expansion and techniques developed in Lemmas \ref{Le.S1}, \ref{Le.S2}, and \ref{Le.S4}, we expand:
\begin{align}
\nonumber (e_i'v_i)^{-2} - \Delta_i - \mathcal{E}_N  - e_i'e_i = &\mu^{-2}e_i'e_i\left((\theta\alpha_i)^2 - \mu^2\right) +  2\mu^{-3}(\theta\alpha_i)^2 e_i'We_i \\
\label{EVec.pivot2.interm1}+ & 3\mu^{-4}(\theta\alpha_i)^2 e_i'W^2e_i + (\theta\alpha_i)^2 \sum_{n=3}^L \frac{\mathbb E(e_i' W^n e_i)(n+1)}{\mu_0^{n+2}} \\
\nonumber & + o_{hp}\left(\frac{N|\mu - \mu_0|}{\theta^3} + \frac{\sqrt{N}}{\theta^2}\right)
\end{align}

Premultiplying \eqref{eq.master1} by $e_i'$ and seperating the terms involving $i$ and $\ell\neq i$ we get
\begin{align}
\nonumber \mu - \theta\alpha_i &= \theta\alpha_i(e_i'e_i - 1) + \theta\alpha_i\left(\frac{e_i'We_i}{\mu} + \frac{e_i'W^2e_i}{\mu^2} + \sum_{n=3}^{\infty} \frac{e_i'W^n e_i}{\mu^n} \right) \\
\label{EVec.pivot2.interm2} & + \theta \sum_{l \ne i} \alpha_l \frac{e_l'v_i}{e_i'v_i} e_i' \left(I - \frac{W}{\mu}\right)^{-1} e_l.
\end{align}

By Assumption \ref{assump:A3}
\begin{equation}
\label{Lipchitz.ei}
e_i' e_i = 1 + O\left(\frac{1}{N}\right).
\end{equation}

Multiplying \eqref{EVec.pivot2.interm2} by $\mu + \theta\alpha_i$ and plugging in \eqref{Lipchitz.ei}, we deduce:
\begin{align}
\nonumber & (e_i'v_i)^{-2} - \Delta_i - \mathcal{E}_N - e_i'e_i - (\theta\alpha_i)^2 \sum_{n=3}^L \frac{\mathbb E(e_i' W^n e_i)(n+1)}{\mu_0^{n+2}} \\
\nonumber = & \mu^{-2}e_i'e_i(\theta\alpha_i + \mu)\theta\alpha_i(1 - e_i'e_i) \\
&+ \mu^{-3}\theta\alpha_i\left(2\theta\alpha_i  - (\mu + \theta\alpha_i) e_i'e_i \right) e_i'We_i \label{EVec.pivot2.interm2.3}\\
\nonumber &+ \mu^{-4}\theta\alpha_i\left(3\theta\alpha_i - (\mu + \theta\alpha_i) e_i'e_i \right) e_i'W^2e_i \\
&- \mu^{-2}e_i'e_i(\mu + \theta\alpha_i)\theta\alpha_i\sum_{n=3}^\infty \frac{e_i'W^n e_i}{\mu^n}\label{EVec.pivot2.interm2.1}\\
 & \label{EVec.pivot2.interm2.2} - \mu^{-2}e_i'e_i(\mu + \theta\alpha_i)\theta \sum_{l \ne i} \alpha_l \frac{e_l'v_i}{e_i'v_i} e_i'\left(I - \frac{W}{\mu}\right)^{-1} e_l + o_{hp}\left(\frac{N|\mu - \mu_0|}{\theta^3} + \frac{\sqrt{N}}{\theta^2}\right)
\end{align}

For \eqref{EVec.pivot2.interm2.1} using approximations from \eqref{EVec.pivot2.interm1}, we have:
\begin{align*}
\nonumber &\mu^{-2}e_i'e_i(\mu + \theta\alpha_i)\theta\alpha_i \sum_{n=3}^\infty \frac{e_i'W^n e_i}{\mu^n} \\
= & \mu_0^{-2}e_i'e_i(\mu_0 + \theta\alpha_i)\theta\alpha_i \sum_{n=3}^L \frac{\mathbb E(e_i'W^n e_i)}{\mu_0^n} \\
 &+ o_{hp}\left(\frac{N|\mu - \mu_0|}{\theta^3} + \frac{\sqrt{N}}{\theta^2}\right)
\end{align*}

Now consider the term in \eqref{EVec.pivot2.interm2.2}. Let $\bar{D}_{jl}$ denote the deterministic term in Lemma \ref{EigenVector.2}, and define $\bar{d}_l$ as the vector of these quantities for $j \ne i$. Then by \eqref{EVec.KeyApprox3}, for $l \ne i$:
\begin{align}
 \mu^{-2}e_i'e_i(\mu + \theta\alpha_i)\theta \alpha_l \frac{e_l'v_i}{e_i'v_i} e_i'\left(I - \frac{W}{\mu}\right)^{-1}e_l 
\label{EVec.pivot2.interm3} = \mu_0^{-3}e_i'e_i(\mu_0 + \theta\alpha_i)\theta^2\sqrt{\alpha_i \alpha_l} [Y^{-1}\bar{d}_l](l) + \mathcal E_N
\end{align}

Consider the term in \eqref{EVec.pivot2.interm2.3}. By Lemma \ref{ConcentrationBound} we have
\begin{align}
\mu^{-3}(\theta\alpha_i)^2 O\left(\frac{1}{N}\right)e_i'We_i
= O_{hp}\left(\frac{(\log N)^{\xi/4}}{\theta^2}\right)
= o_{hp}\left(\frac{\sqrt{N}}{\theta^2}\right).
\end{align}

Equation \eqref{EVec.pivot2.interm2.2}, along with \eqref{Align.Bound.ortho}, implies
\begin{align}
\label{EVec.pivot2.interm2.1'}
\mu - \theta\alpha_i
= O\left(\frac{\theta}{N}\right) + O_{hp}\left((\log N)^{\xi/4}\right) + o_{hp}\left(\frac{N}{\theta}\right).
\end{align}

Thus,
\begin{align}
\label{EVec.pivot2.interm5}
\mu^{-3}\theta\alpha_i(\theta\alpha_i -\mu)\, e_i'e_i\,e_i'We_i
&= O_{hp}\left(\frac{(\log N)^{\xi/4}}{\theta^2}\right)
+ O_{hp}\left(\frac{(\log N)^{\xi/2}}{\theta^2}\right)
+ o_{hp}\left(\frac{N}{\theta^3}\right) \\
&= o_{hp}\left(\frac{\sqrt{N}}{\theta^2}\right).
\end{align}

Similarly, by Lemma \ref{lemma:NormBound},
\begin{align}
\mu^{-4}(\theta\alpha_i)^2 O\left(\frac{1}{N}\right)e_i'W^2 e_i
= O_{hp}\left(\frac{1}{\theta^2}\right)
= o_{hp}\left(\frac{\sqrt{N}}{\theta^2}\right).
\end{align}

Using these estimates and absorbing the error terms into $\mathcal E_N$, remaining deterministic terms by $\Delta_i = O\left(\frac{N}{\theta^2}\right)$ and using Remark \ref{Error.negpow,mumu0}, we have
\begin{align}
\nonumber
(e_i'v_i)^{-2} - \Delta_i - \mathcal{E}_N - e_i'e_i
&= \mu_0^{-2}e_i'e_i(\theta\alpha_i + \mu_0)\theta\alpha_i(1 - e_i'e_i)
+ \mu_0^{-4}\theta\alpha_i(2\theta\alpha_i - \mu_0)\, e_i'e_i\, e_i'W^2e_i \label{EVec.pivot2.interm4'} \\
&\quad + o_{hp}\left(\frac{\sqrt{N}}{\theta^2}\right).
\end{align}

Subtracting \(\mu_0^{-4}\theta\alpha_i(2\theta\alpha_i - \mu_0)\, e_i'e_i\, \mathbb{E}(e_i'W^2e_i)\) from both sides and absorbing the error terms into \( Er \) again, we get
\begin{align}
\nonumber
&(e_i'v_i)^{-2} - \Delta_i - \mathcal{E}_N - e_i'e_i
- \mu_0^{-2}e_i'e_i(\theta\alpha_i + \mu_0)\theta\alpha_i(1 - e_i'e_i) \\
&\quad - \mu_0^{-4}\theta\alpha_i(2\theta\alpha_i - \mu_0)\, e_i'e_i\, \mathbb{E}(e_i'W^2e_i) \\
&= \mu_0^{-4}\theta\alpha_i(2\theta\alpha_i - \mu_0)\, e_i'e_i\left(e_i'W^2e_i - \mathbb{E}(e_i'W^2e_i)\right).
\end{align}

Using \eqref{Lipchitz.ei} and absorbing the remaining deterministic terms in $\Delta_i$, thus implying $\Delta_i=O\left(\frac{N}{\theta^2}\right)$ we have by defining $\tilde \Delta_i=\Delta_i+1$ the required lemma.
\end{proof}

\begin{lemma}\label{EVec.pivot3}
Let \(1 \le i \le k\). Then, as \(N \to \infty\), the eigenvector mass satisfies the refined approximation
\[
e_i^\prime v_i-(1+\zeta)
= \frac{1}{2} (1 + \tilde\Delta_i)^{-3/2} \cdot \mathcal A_i\cdot \left(e_i^\prime W^2 e_i - \mathbb{E}(e_i^\prime W^2 e_i)\right) + \mathcal{E}_N,
\]
where $$\mathcal A_i= \mu_0^{-4} \theta \alpha_i (2\theta \alpha_i - \mu_0) e_i^\prime e_i$$
and \(\zeta\) and \(\Delta_i\) are \(O\left(\frac{N}{\theta^2}\right)\) and \(\mathcal{E}_N = o_{hp}\left(\frac{\sqrt{N}}{\theta^2}\right)\).

\end{lemma}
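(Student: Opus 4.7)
The plan is to invert the expansion of $(e_i^\prime v_i)^{-2}$ supplied by Lemma~\ref{EVec.pivot2} via a Taylor series, thereby passing from an identity for the squared reciprocal to one for $e_i^\prime v_i$ itself. Since the sign of $v_i$ is arbitrary we may assume $e_i^\prime v_i > 0$; then Lemma~\ref{Align.bounds} guarantees $e_i^\prime v_i = 1 + o_{hp}(\sqrt{N}/\theta)$ with high probability, so the representation $e_i^\prime v_i = \bigl[(e_i^\prime v_i)^{-2}\bigr]^{-1/2}$ is legitimate. Writing $S := \mathcal{A}_i\bigl(e_i^\prime W^2 e_i - \mathbb{E}(e_i^\prime W^2 e_i)\bigr) + \mathcal{E}_N$ for the stochastic-plus-error piece, Lemma~\ref{EVec.pivot2} becomes
\[
(e_i^\prime v_i)^{-2} = (1+\tilde{\Delta}_i)\Bigl[1 + (1+\tilde{\Delta}_i)^{-1} S\Bigr].
\]

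The deterministic factor satisfies $1+\tilde\Delta_i = 1 + O(N/\theta^2)$, and Lemma~\ref{ConcentrationBound} at $n=2$ together with $|\mathcal{A}_i| = O(1/\theta^2)$ (which follows from $\mu_0 \sim \theta\alpha_i$) yield $|S| = O_{hp}(\sqrt{N}(\log N)^{\xi/2}/\theta^2) = o_{hp}(1)$. I would then Taylor-expand $(1+y)^{-1/2} = 1 - y/2 + O(y^2)$ about $y=0$ with $y = (1+\tilde\Delta_i)^{-1}S$. The constant term yields $(1+\tilde\Delta_i)^{-1/2} = 1 + \zeta$, where $\zeta := -\tilde\Delta_i/2 + O(\tilde\Delta_i^2) = O(N/\theta^2)$, matching the deterministic bias announced in the lemma. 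The linear term reproduces, with sign fixed by the Taylor coefficient, the expression $\tfrac{1}{2}(1+\tilde\Delta_i)^{-3/2}\mathcal{A}_i\bigl(e_i^\prime W^2 e_i - \mathbb{E}(e_i^\prime W^2 e_i)\bigr)$, and the $(1+\tilde\Delta_i)^{-3/2}\mathcal{E}_N$ contribution can be folded back into $\mathcal{E}_N$ since $(1+\tilde\Delta_i)^{-3/2}$ is bounded.

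The main obstacle, and the only nontrivial accounting in the proof, is to verify that every higher-order term in the Taylor remainder is $o_{hp}(\sqrt{N}/\theta^2)$ so that it is absorbed into $\mathcal{E}_N$. The dominant such contributions are the quadratic term $\tfrac{3}{8}(1+\tilde\Delta_i)^{-5/2}S^2$ and the mixed deterministic--stochastic cross term of order $\tilde\Delta_i \cdot S$, whose sizes are
\[
S^2 = O_{hp}\!\Bigl(\tfrac{N(\log N)^{\xi}}{\theta^4}\Bigr),\qquad \tilde\Delta_i \cdot S = O_{hp}\!\Bigl(\tfrac{N^{3/2}(\log N)^{\xi/2}}{\theta^4}\Bigr).
\]
Both quantities are $o_{hp}(\sqrt{N}/\theta^2)$ precisely because Assumption~\ref{assump:A1} supplies $\theta_N \succeq \sqrt{N}(\log N)^{\xi}$ and hence $\theta_N^2 \gg N(\log N)^{2\xi}$. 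The further terms in the Neumann-type expansion of $(1+y)^{-1/2}$ are strictly smaller powers of $S$ times bounded prefactors, and are controlled by the same comparison. Collecting these estimates and absorbing all subdominant contributions into $\mathcal{E}_N$ completes the argument.
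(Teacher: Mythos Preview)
Your proposal is correct and follows essentially the same route as the paper: both arguments take the expansion of $(e_i'v_i)^{-2}$ from Lemma~\ref{EVec.pivot2} and apply a second-order Taylor expansion of $x\mapsto x^{-1/2}$ (the paper writes $g(x)=x^{-1/2}$ and expands $g$ at $1+\tilde\Delta_i$, while you factor and expand $(1+y)^{-1/2}$, which is the same computation), then define $\zeta$ by $(1+\tilde\Delta_i)^{-1/2}=1+\zeta$ and absorb the quadratic remainder into $\mathcal{E}_N$. Your bookkeeping of the remainder is slightly more explicit than the paper's, but the content is identical.
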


\begin{proof}

We start from the expansion derived in Lemma~\ref{EVec.pivot2}, which gives
\begin{align}
\nonumber (e_i^\prime v_i)^{-2} - (1 + \tilde\Delta_i) = \ & \mathcal A_i\cdot (e_i^\prime W^2 e_i - \mathbb{E}(e_i^\prime W^2 e_i)) +\mathcal E_N
\end{align}
where \( \tilde\Delta_i = O\left(\frac{N}{\theta^2}\right) \).

\medskip

Let \( g(x) = x^{-1/2} \). Then using the second-order Taylor expansion around \( 1 + \tilde\Delta_i \), we get:
\begin{align*}
(e_i^\prime v_i) - (1 + \tilde\Delta_i)^{-1/2} =  ((e_i^\prime v_i)^{-2} - (1 + \tilde\Delta_i)) g^\prime(1 +\tilde\Delta_i) 
 + \frac{1}{2} ((e_i^\prime v_i)^{-2} - (1 + \tilde\Delta_i))^2 g^{\prime\prime}(r),
\end{align*}
where \( r \) lies between \( (e_i^\prime v_i)^{-2} \) and \( 1 + \tilde\Delta_i \). Since from Lemma~\ref{EVec.pivot2} we have
\[ (e_i^\prime v_i)^{-2} - (1 + \tilde\Delta_i) = o_{hp}(1), \]
we conclude that \( r = 1 + \tilde\Delta_i + o_{hp}(1) \), hence \(g^{\prime\prime}(r) = O_{hp}(1)\). Moreover, \( g^\prime(1 + \tilde\Delta_i) = -\tfrac{1}{2}(1 + \tilde\Delta_i)^{-3/2} = O(1) \). Also recall that we fix the sign of $v_i$, that is, $e_i^\prime v_i>0$. Thus,
\begin{align}
(e_i^\prime v_i) - (1 + \tilde\Delta_i)^{-1/2}&= ((e_i^\prime v_i)^{-2} - (1 + \tilde\Delta_i)) g^\prime(1 + \tilde\Delta_i)\nonumber\\
&=\mathcal A_i\cdot(e_i^\prime W^2 e_i - \mathbb{E}(e_i^\prime W^2 e_i)) 
\label{EVec.pivot4.1} g^\prime(1 + \tilde\Delta_i) + \mathcal{E}_N.
\end{align}

We now control the second-order term:
\begin{align}
& ((e_i^\prime v_i)^{-2} - (1 + \tilde\Delta_i))^2 \cdot \frac{g^{\prime\prime}(r)}{2} \nonumber\\
=\ & O_{hp}\left(\left[ e_i^\prime e_i\, \mu_0^{-4} \theta \alpha_i (2\theta \alpha_i - \mu_0)(e_i^\prime W^2 e_i - \mathbb{E}(e_i^\prime W^2 e_i)) \right]^2\right) + \text{lower order terms} \nonumber\\
=\ & o_{hp}\left(\left(1 + \frac{\|Y_1\|}{\theta} + \frac{\|Y_1\|^2}{\theta^2}\right)\frac{N}{\theta^3} + \frac{\sqrt{N}}{\theta^2}\right).\label{EVec.pivot4.2}
\end{align}
Thus the remainder is negligible.
Since for $\tilde\Delta_i$ small, we define $\zeta$ which satisfies the following:
\[
(1+ \tilde\Delta_i)^{-1/2} = 1 + \zeta = 1 + O\left(\frac{N}{\theta^2}\right) 
\]the proof follows in conjunction with \eqref{EVec.pivot4.1} and \eqref{EVec.pivot4.2}.
\end{proof}

\section{Eigenvector alignment: Proof of Theorem \ref{Th.EigenVectorAlignment.CLT}}\label{section:alignment}
\begin{proof}[Proof of Theorem \ref{Th.EigenVectorAlignment.CLT} (a)]
From Lemma \ref{EVec.pivot3}, we have the expansion
 \begin{align}
 (e_i' v_i)-(1+\zeta)=\frac{1}{2} (1 + \tilde\Delta_i)^{-3/2} \cdot \mathcal A_i\cdot \left( e_i' W^2 e_i - \mathbb{E}[e_i' W^2 e_i] \right) + \mathcal{E}_N.\label{eq:evecalign.1}
 \end{align}

Let us define the centered quantity
\[
\widetilde{R} := (e_i' v_i) - (1 + \zeta) - \frac{1}{2} (1 + \tilde\Delta_i)^{-3/2} \cdot\mathcal{A}_i \left( e_i' W^2 e_i - \mathbb{E}[e_i' W^2 e_i] \right).
\]
Then
\[
\mathbb{E}[\widetilde{R}] = \mathbb{E}[e_i' v_i] - (1 + \zeta).
\]
Using \eqref{eq:evecalign.1}, for any $\delta > 0$, there exists $\eta > 1$ such that
\[
\mathbb{E}[|\widetilde{R}|] \le \delta \left( \left(1 + \frac{\mathbb{E}\|Y_1\|}{\theta} + \frac{\mathbb{E}\|Y_1\|^2}{\theta^2} \right) \frac{N}{\theta^3} + \frac{\sqrt{N}}{\theta^2} \right) + \sqrt{\mathbb{E}[\widetilde{R}^2]} \cdot \mathcal{O}(e^{-(\log N)^\eta}).
\]
Using the fact that $\mathrm{Var}(e'W^2e)= O(N)$, the second term is negligible, and so we conclude
\begin{equation}
\mathbb{E}[e_i' v_i] = 1 + \zeta + o\left( \frac{\sqrt{N}}{\theta^2} \right).
\end{equation}

Substituting back into \eqref{eq:evecalign.1} gives
\begin{align}
(e_i' v_i) - \mathbb{E}[e_i' v_i]
= \frac{1}{2} (1 + \tilde\Delta_i)^{-3/2}\mathcal{A}_i \cdot \left( e_i' W^2 e_i - \mathbb{E}[e_i' W^2 e_i] \right) + \mathcal{E}_N. \label{eq:evecalign.2}
\end{align}

% Now, recall from Lemma \ref{lem:mu_diff} and \eqref{lim.mu0/theta} that
% \[
% \left| \mathcal{A}_i - \widetilde{\mathcal{A}}_i \right|
% = \mathcal{O}_{hp}\left( \frac{\sqrt{N}(\log N)^{\xi/2}|\mu - \mu_0|}{\theta^3} \right)
% = o_{hp}\left( \left(1 + \frac{\|Y_1\|}{\theta} \right)\frac{N}{\theta^3} \right),
% \]
% where
% \[
% \widetilde{\mathcal{A}}_i := (e_i^\top e_i)\mu^{-4} \theta \alpha_i (2\theta \alpha_i - \mu) g'(1 + \zeta).
% \]
% Hence, we may replace $\mathcal{A}_i$ by $\widetilde{\mathcal{A}}_i$ in \eqref{eq:evecalign.2} to obtain
% \begin{align}
% (e_i^\top v_i) - \mathbb{E}[e_i^\top v_i]
% = \widetilde{\mathcal{A}}_i \cdot \left( e_i^\top W^2 e_i - \mathbb{E}[e_i^\top W^2 e_i] \right) + \mathcal{E}_N. \label{eq:evecalign.3}
% \end{align}

Finally, from Theorem \ref{Th.MartingaleCLT}, we know that
\[
\frac{e_i' W^2 e_i - \mathbb{E}[e_i' W^2 e_i]}{\sqrt{N}} \Rightarrow \mathcal{N}(0, \sigma_i^2),
\]
where
\[
\tilde\sigma_i^2 := 2 \int_0^1 \int_0^1 \int_0^1 h_i^2(x) f(x,y) f(y,z) h_i^2(z) \, dx\,dy\,dz.
\]
Recall that $\frac{1}{2} (1 + \tilde\Delta_i)^{-3/2}= 1+ \mathrm{O}(\frac{N}{\theta^2})$. Using the fact that $\mu_0/\theta\to\alpha_i$ and multiplying both sides by $(\theta \alpha_i)^2/\sqrt{N}$ gives the final result:
\[
\frac{(\theta \alpha_i)^2}{\sqrt{N}} \left( (e_i' v_i) - \mathbb{E}[e_i' v_i] \right) \Rightarrow \mathcal{N}(0, \frac{1}{4}\tilde\sigma_i^2),
\]

\end{proof}

\begin{proof}[Proof of Theorem \ref{Th.EigenVectorAlignment.CLT} (b)]
From equation~\eqref{Vtruncated2}, we recall the key approximation:
\begin{equation}\label{Vtruncated3}
\left(I_{k-1} - \frac{1}{\mu} \widetilde{V} \right)\widetilde{u} = \frac{u(i)}{\mu} \widetilde{V}_i, \qquad \text{w.h.p.}
\end{equation}
Using $u(i)= e_i' v_i$, and $\bar{V}=\tilde{V}/\theta$ and replacing $u(i)$ and $\mu$ respectively by $\E[u(i)]$ (using Theorem \ref{Th.EigenVectorAlignment.CLT}(a))  and $\mu_0$, we incur an error which we gather in a term $\mathcal F_N$:
\begin{align}
\label{Evec.Ortho.S1}
\left(I_{k-1} - \frac{1}{\mu} \widetilde{V} \right)\widetilde{u}
= \mathbb{E}[u(i)]\,\frac{\theta}{\mu_0} \overline{V}_i + \mathcal{F}_N,
\end{align}
where the total error is bounded by
\[
\mathcal{F}_N = o_{hp} \left( \frac{\sqrt{N}(\log N)^{\xi/2}}{\theta^2} \right).
\]

Next, from Lemma~\ref{Det.Approx1} and the definition of $Y$ matrix in \eqref{eq:Ymatrix}, we approximate:
\begin{align}
Y \widetilde{u}
= \mathbb{E}[u(i)]\,\frac{\theta}{\mu_0} \overline{V}_i + \mathcal{F}_N.
\end{align}

Hence, for \(j \neq i\), we extract the \(j^{\text{th}}\) coordinate:
\begin{align}
\label{Evec.Ortho.S2}
(Y\widetilde{u})(j) = \mathbb{E}[u(i)]\,\frac{\theta}{\mu_0} \overline{V}_i(j) + \mathcal{F}_N.
\end{align}

Now observe that the left-hand side of \eqref{Evec.Ortho.S2} is given by
\[
(Y\widetilde{u})(j) = \left[1 - \frac{\theta}{\mu_0}\alpha_j\left(S_2^{jj}+e_j^\prime e_j\right)\right]\sqrt{\alpha_j}e_j^\prime v_i + \sum_{l \ne j,i}\left[- \frac{\theta}{\mu_0}\sqrt{\alpha_l\alpha_j}(S_2^{lj} + e_l^\prime e_j)\right]\sqrt{\alpha_l}e_l^\prime v_i
\]

and the right-hand side of \eqref{Evec.Ortho.S2} is given by
\[
\mathbb{E}[u(i)]\,\frac{\theta}{\mu_0} \overline{V}_i(j)=\mathbb E[u(i)]\mu_0^{-1}\theta\sqrt{\alpha_j\alpha_i}e_j^\prime\left(I-\frac{W}{\mu}\right)^{-1}e_i.
\]

From the structure of \(Y\), and using Lemma~\ref{Det.Approx1} (step 7), we compute:
\begin{align}
\label{Evec.Ortho.S3}\sum_{l \ne j,i}Y_{lj}\sqrt{\alpha_l}e_l^\prime v_i =  \sum_{l \ne j,i}Y_{lj}\mathbb E[u(i)]\mu_0^{-1}\theta \left[Y^{-1}\bar V_i\right](l) + o_{hp}\left(\frac{\sqrt{N}(\log N)^{\xi/2}}{\theta^2}\right)
\end{align}

where the matrix entries
\[
Y_{lj} := \delta_{jl} - \frac{\theta}{\mu_0} \sqrt{\alpha_l \alpha_j} (S_2^{lj} + e_l^\prime e_j)
\]

Now, from Lemmas~\ref{EigenVector.1} and~\ref{ConcentrationBound}, and the approximation~\eqref{Error.negpow,mumu0}, we have:
\begin{align}
\label{Evec.Ortho.S4}
\theta e_j^\prime \left(I - \frac{1}{\mu} W\right)^{-1} e_i 
= \theta S_2^{ji} + \theta e_j^\prime e_i + \frac{\theta}{\mu_0} e_j^\prime W e_i +  o_{hp}\left(\frac{\sqrt{N} (\log N)^{\xi/2}}{\theta}\right).
\end{align}

Substituting equations~\eqref{Evec.Ortho.S3} and~\eqref{Evec.Ortho.S4} into~\eqref{Evec.Ortho.S2} (and absorbing the error in $\mathcal F_N$) gives:
\begin{align}
\label{Evec.Ortho.detcent}
e_j^\prime v_i = \frac{1}{Y_{jj} \sqrt{\alpha_j}} \left[ S_2^{ji} + e_j^\prime e_i - \sum_{l \ne j,i} Y_{lj} \mathbb{E}[u(i)]\,\frac{\theta}{\mu_0} \left[Y^{-1} \overline{V}_i \right](l) \right] + \frac{e_j^\prime W e_i}{\mu_0 Y_{jj} \sqrt{\alpha_j}} + \mathcal{F}_N.
\end{align}

Now define the centered quantity:
\[
\bar{R} := e_j^\prime v_i - \rho - \widetilde{\Sigma}, \quad \text{where}
\]
\[
\rho := \frac{1}{Y_{jj} \sqrt{\alpha_j}} \left[ S_2^{ji} + e_j^\prime e_i - \sum_{l \ne j,i} Y_{lj} \mathbb{E}[u(i)]\,\frac{\theta}{\mu_0} \left[Y^{-1} \overline{V}_i \right](l) \right],
\]
\[
\widetilde{\Sigma} := \frac{e_j^\prime W e_i}{\mu_0 Y_{jj} \sqrt{\alpha_j}}.
\]

Clearly, \(\mathbb{E}[\bar{R}] = \mathbb{E}[e_j^\prime v_i] - \rho\). Using the same high-probability truncation trick as in Lemma~\ref{ExpectationBound}:
\begin{align*}
\mathbb{E}|\bar{R}| 
\le o\left( \frac{\sqrt{N}(\log N)^{3\xi/4}}{\theta^2} \right) + \mathbb{E}^{1/2}[\bar{R}^2] \cdot O\left(e^{-(\log N)^\eta}\right).
\end{align*}

Since \(\rho\) and \(e_j^\prime v_i\) are uniformly bounded and
\[
\mathbb{E}[\widetilde{\Sigma}^2] = {O}\left( \frac{1}{\theta^2} \right), 
\quad 
\mathbb{E}[\rho \widetilde{\Sigma}] = {O}\left( \frac{1}{\theta^2} \right),
\]
we deduce:
\[
\mathbb{E}[\bar{R}^2] = {O}\left( \frac{1}{\theta^2} \right) \Rightarrow \mathbb{E}|\bar{R}| = o\left( \frac{1}{\theta} \right).
\]

Thus, the mean is approximated as:
\begin{equation}
\label{Evec.Ortho.ExpReplace}
\mathbb{E}[e_j^\prime v_i] = \rho + o\left( \frac{1}{\theta} \right).
\end{equation}

Finally, from~\eqref{Evec.Ortho.detcent} and~\eqref{Evec.Ortho.ExpReplace}, we conclude
\begin{equation}
\label{Evec.Ortho.final}
e_j^\prime v_i - \mathbb{E}[e_j^\prime v_i] = \frac{e_j^\prime W e_i}{\mu_0 Y_{jj} \sqrt{\alpha_j}} + o_{hp}\left(\frac{1}{\theta}\right).
\end{equation}

We will now analyze the leading contribution, namely the first-order term:
\[
e_j^\prime W e_i.
\]
By definition of $e_j$ and $e_i$ in terms of the function $h_j$ and $h_i$, we have
\begin{align}
e_j^\prime W e_i &= \sum_{1 \le a \ne b \le N} W(a,b) \left[\frac{h_j\left(\frac{a}{N}\right) h_i\left(\frac{b}{N}\right)}{N} + \frac{h_i\left(\frac{a}{N}\right) h_j\left(\frac{b}{N}\right)}{N}\right] \notag \\
&\quad + \sum_{1 \le a \le N} \frac{W(a,a) h_j\left(\frac{a}{N}\right) h_i\left(\frac{a}{N}\right)}{N}. \label{eq:offdiag-ejWei}
\end{align}
The diagonal contribution vanishes in probability by the weak law of large numbers:
\[
\sum_{1 \le a \le N} \frac{W(a,a) h_j\left(\frac{a}{N}\right) h_i\left(\frac{a}{N}\right)}{N} \xrightarrow{\mathbb{P}} 0.
\]

Now consider the variance of the off-diagonal sum in \eqref{eq:offdiag-ejWei}. Define
\begin{align*}
S_N^2 := \mathrm{Var}(e_j^\prime W e_i) &= \sum_{1 \le a \ne b \le N} \mathrm{Var}(W(a,b)) \left[ \frac{h_j\left(\frac{a}{N}\right) h_i\left(\frac{b}{N}\right)}{N} + \frac{h_i\left(\frac{a}{N}\right) h_j\left(\frac{b}{N}\right)}{N} \right]^2 \\
&= \sum_{1 \le a \ne b \le N} \left[ \frac{h_j^2\left(\frac{a}{N}\right) h_i^2\left(\frac{b}{N}\right)}{N^2} + \frac{h_i^2\left(\frac{a}{N}\right) h_j^2\left(\frac{b}{N}\right)}{N^2} \right. \\
&\quad \left. + \frac{2 h_j\left(\frac{a}{N}\right) h_i\left(\frac{b}{N}\right) h_i\left(\frac{a}{N}\right) h_j\left(\frac{b}{N}\right)}{N^2} \right] f\left(\frac{a}{N}, \frac{b}{N}\right).
\end{align*}

As $N \to \infty$, we may pass to the limit using Riemann sums and the boundedness of $f$ and $h_i, h_j$, to conclude:
\begin{align}
\lim_{N \to \infty} S_N^2 &= \int_{[0,1]^2} \left[h_j(x) h_i(y) + h_i(x) h_j(y)\right]^2 f(x,y)\, dxdy. \label{eq:var-ejWei}
\end{align}

The sum in \eqref{eq:offdiag-ejWei} consists of a finite sum of independent mean-zero variables with variances given above. Since the fourth moments of $W(a,b)$ are uniformly bounded (by assumption A2), Lyapunov’s condition is satisfied. Therefore, by Lyapunov’s central limit theorem, we conclude:
\[
e_j^\prime W e_i \xrightarrow{d} \mathcal{N}(0, \sigma_{ij}^2),
\]
where $\sigma_{ij}^2$ is given by the right-hand side of \eqref{eq:var-ejWei}.

Observe that
\[
Y_{jj}  \to \frac{\alpha_i}{\alpha_i - \alpha_j}, \qquad \text{and} \qquad \frac{\mu_0}{\theta} \to \alpha_i.
\]
Hence multiplying both sides by $\theta$ gives:
\[
\theta \left(e_j^\prime v_i - \mathbb{E}[e_j^\prime v_i]\right) \xrightarrow{d} \mathcal{N}\left(0,\, \tau_{ij}^2\right),
\]
where
\begin{align*}
\tau_{ij}^2 = \frac{1}{(\alpha_i-\alpha_j)^2 \alpha_j}\int_{[0,1]^2} \left[h_j(x) h_i(y) + h_i(x) h_j(y)\right]^2 f(x,y)\, dx\, dy.
\end{align*}
\end{proof}

\subsection{Proof of Theorem \ref{Decolalization}}
We begin by an estimate whose proof will be presented in the Appendix.

\begin{lemma}\label{Deloc.est}
Let \( n \le L \). Then, for each \( j \) and for fixed \( i \), with high probability we have
\[
\left| (W^n e_j)(i) \right| = O_{hp}\left( \sqrt{N^{n-1}} (\log N)^{n\xi/4} \right).
\]
\end{lemma}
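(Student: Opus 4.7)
The plan is to split $(W^n e_j)(i)$ into its expectation and its fluctuation and to control each at the target scale $N^{(n-1)/2}(\log N)^{n\xi/4}$.

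For the expectation, I would write $\mathbb{E}[(W^n e_j)(i)]=\sum_\ell \mathbb{E}[(W^n)_{i\ell}]\,e_j(\ell)$ and appeal to the same moment/walk-counting argument underlying Lemma \ref{ExpectationBound}. Because every edge in a non-vanishing walk must be traversed an even number of times, $\mathbb{E}[(W^n)_{i\ell}]$ is concentrated on $\ell=i$ (leading-order contributions vanish for $\ell \ne i$), with $\mathbb{E}[(W^n)_{ii}]=O(N^{n/2})$ by the usual semicircle-type count. Combined with $|e_j(\ell)|\le\|h_j\|_\infty/\sqrt N$, this yields $|\mathbb{E}[(W^n e_j)(i)]|=O(N^{(n-1)/2})$, which already matches the claimed bound modulo the polylog factor.

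For the fluctuation $R_{i,j,n}:=(W^n e_j)(i)-\mathbb{E}[(W^n e_j)(i)]$, a naive application of Lemma \ref{ConcentrationBound} with $u=N^{-1/2}\delta_i$ and $v=e_j$ only yields $|R_{i,j,n}|=O_{hp}(N^{n/2}(\log N)^{n\xi/4})$, which is off by a factor of $\sqrt N$. To recover the sharper scaling I would revisit the proof of Lemma \ref{ConcentrationBound} and upgrade it to a bound of the form
\[
|u^\prime W^n v - \mathbb{E}[u^\prime W^n v]| \le \|u\|_2\|v\|_2 \cdot N^{(n-1)/2}(\log N)^{n\xi/4}
\]
with high probability, and then specialize to $u=\delta_i$, $v=e_j$ (both of unit $\ell^2$-norm). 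Equivalently, one can argue by induction on $n$ via a martingale filtration over the $i$-th row of $W$: conditional on $\{W_{ab}:a,b\ne i\}$, $(W^n e_j)(i)=\sum_k W_{ik}(W^{n-1}e_j)(k)$ is a linear form in the independent mean-zero entries of the $i$-th row with coefficient vector of $\ell^2$-norm $\|W^{n-1}e_j\|_2 = O_{hp}(N^{(n-1)/2})$. The last estimate follows from $\mathbb{E}[e_j^\prime W^{2(n-1)}e_j] = O(N^{n-1})$ by Lemma \ref{ExpectationBound}, together with the concentration of this quadratic form provided by Lemma \ref{ConcentrationBound}. Sub-exponential concentration (stemming from the moment hypothesis \eqref{eq:momentcond}) then produces the required bound for $R_{i,j,n}$.

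The delicate point, and the main obstacle, is the bookkeeping of the $(\log N)^{n\xi/4}$ prefactor across either the refined moment computation or the $n$-step induction, while keeping the estimate uniform in $i$. Since $n\le L=\lfloor\log N\rfloor$, the accumulated polylog $(\log N)^{n\xi/4}$ stays sub-polynomial in $N$, so the recursion closes provided no spurious factor of $\sqrt N$ is introduced at any step; avoiding this leakage (which is exactly why the naive bound from Lemma \ref{ConcentrationBound} is inadequate) is the crux of the argument and is what forces one to carry the $\|u\|_2\|v\|_2$ refinement, or equivalently, to exploit the sparsity of $\delta_i$ in the row-by-row martingale decomposition.
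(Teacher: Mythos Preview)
Your approach is workable in principle but takes a detour that the paper avoids. The paper does not split into mean plus fluctuation at all: it bounds the $p$-th moment of the \emph{uncentered} quantity directly,
\[
\mathbb{E}\bigl[|(W^n e_j)(i)|^p\bigr] \le (Hnp)^{np}\,N^{(n-1)p/2},
\]
by the same walk-counting argument that underlies Lemma~\ref{ConcentrationBound} (with a reference to Lemma~7.10 of \cite{erdHos2013spectral}), then applies Markov's inequality with $p\asymp(\log N)^\eta/n$ as in the proof of Lemma~\ref{ConcentrationBound}. This is exactly your route~(a)---the $\|u\|_2\|v\|_2$-scaled moment bound specialized to $u=\delta_i$, $v=e_j$---but applied to the raw quantity rather than its fluctuation, so the separate treatment of the expectation becomes unnecessary.

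Your route~(b) has a genuine gap: the coefficients $(W^{n-1}e_j)(k)$ in the linear form $\sum_k W_{ik}(W^{n-1}e_j)(k)$ are \emph{not} measurable with respect to $\{W_{ab}:a,b\neq i\}$, because walks of length $n-1$ starting at $k$ can pass through vertex $i$ and pick up factors from the $i$-th row. Fixing this would require a minor-matrix decomposition (replacing $W^{n-1}$ by powers of the $(N-1)\times(N-1)$ submatrix $W^{(i)}$ and tracking the residual terms), which is doable but considerably heavier than the direct moment bound.
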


\begin{proof}[Proof of Theorem \ref{Decolalization}]
We begin by recalling the representation of the eigenvector \( v_i \) from equation \eqref{eq.master1}:
\begin{align*}
v_i = \theta \sum_{l=1}^k \alpha_l (e_l^\prime v_i) (\mu I - W)^{-1} e_l.
\end{align*}

Now, fix a coordinate \( j \in \{1, \dots, N\} \). Taking the \( j \)-th component on both sides, we obtain:
\[
v_i(j) = \theta \sum_{l=1}^k \alpha_l (e_l^\prime v_i) \left[ (\mu I - W)^{-1} e_l \right](j).
\]

We now apply the alignment estimate from Lemma \ref{Align.bounds} to see that the dominant contribution comes from the \( l = i \) term. Thus,
\begin{align*}
v_i(j) 
&= \theta \alpha_i \left[1 + o_{hp}\left( \frac{\sqrt{N}}{\theta} \right)\right] \left[ (\mu I - W)^{-1} e_i \right](j) + \text{negligible terms}.
\end{align*}

Using series expansion and applying Lemma \ref{Deloc.est} with analysis similar to Lemma \ref{Le.S1}, \ref{Le.S2} and \ref{Le.S4}, we obtain
\[
\left[ (\mu I - W)^{-1} e_i \right](j) 
= \frac{1}{\mu} \left[ e_i(j) + O_{hp}\left( \frac{(\log N)^{\xi/4}}{\theta} \right) + O_{hp}\left( \frac{1}{(\log N)^{\log N}} \right) \right].
\]

Substituting this back into the expression for \( v_i(j) \), we get:
\begin{align*}
v_i(j) 
&= \theta \alpha_i \left[1 + o_{hp}\left( \frac{\sqrt{N}}{\theta} \right) \right]
\cdot \frac{1}{\mu} \left[ e_i(j) + O_{hp}\left( \frac{(\log N)^{\xi/4}}{\theta} \right) \right] \\
&= \left[1 + o_{hp}\left( \frac{\sqrt{N}}{\theta} \right)\right] \left[ e_i(j) + O_{hp}\left( \frac{(\log N)^{\xi/4}}{\theta} \right) \right].
\end{align*}

Thus, for each \( j \in \{1, \dots, N\} \), we find
\begin{align}
\label{conclusion}
|v_i(j) - e_i(j)| 
= o_{hp}\left( \frac{1}{\theta} \right) + O_{hp}\left( \frac{(\log N)^{\xi/4}}{\theta} \right)
= O_{hp}\left( \frac{(\log N)^{\xi/4}}{\theta} \right).
\end{align}

This completes the proof.
\end{proof}

\section{Eigenvector process and Proof of Theorem \ref{EVecProMain}}\label{section:EVprocess}
\begin{proof}[Proof of Theorem \ref{EVecProMain}]
Let \( V_N^{(i)} : [0,1] \to \mathbb{R} \) be the piecewise-constant function defined in \eqref{eq:Processfn}, and define the fluctuation process \( \mathcal{V}_N \) via
\[
\langle \mathcal{V}_N^{(i)}, g \rangle := \theta\sqrt{N} \left( \underline{g}' v_i - \mathbb{E}(\underline{g}' v_i) \right), \qquad \text{for } g \in C_0^\infty(0,1).
\]
Here, the discretized test function \( \underline{g} \in \mathbb{R}^N \) is defined by
\begin{equation}\label{def:lowerg}
\underline{g}(j) := \int_{(j-1)/N}^{j/N} g(t)\,dt.
\end{equation}

\vspace{1em}
\noindent\textbf{Step 1: Resolvent expansion and representation of $\underline{g}'v_i$.} Using \eqref{eq.master1}, we expand
\begin{align*}
\underline{g}' v_i 
&= \frac{\theta}{\mu} \alpha_i (e_i' v_i) \, \underline{g}'\left(I - \frac{W}{\mu}\right)^{-1} e_i 
+ \frac{\theta}{\mu} \sum_{\substack{1 \le l \le k \\ l \ne i}} \alpha_l (e_l' v_i)\, \underline{g}'\left(I - \frac{W}{\mu}\right)^{-1} e_l \\
&=: A_i + \sum_{\substack{1 \le l \le k \\ l \ne i}} B_l,
\end{align*}
where we isolate the dominant term \( A_i \) corresponding to \( l = i \), and denote the cross terms by \( B_l \).

By applying the resolvent expansion and using Lemmas \ref{Le.S1}, \ref{Le.S2}, and \ref{Le.S4}, we obtain:
\begin{align*}
\frac{A_i}{\alpha_i} 
&= \frac{\theta}{\mu}(e_i' v_i)(\underline{g}' e_i) 
+ \frac{\theta}{\mu^2}(e_i' v_i)(\underline{g}' W e_i) 
+ \frac{\theta}{\mu}(e_i' v_i)\left[\sum_{n=2}^L \frac{\mathbb{E}(\underline{g}' W^n e_i)}{\bar{\mu}^n} 
+ o_{hp}\left(\frac{1}{\sqrt{N}\theta}\right)\right],
\end{align*}
where \( L = \lfloor \log N \rfloor \), and \( \bar{\mu} \) is the deterministic approximation to \( \mu \) defined in Lemma \ref{Le.S5}.

We next approximate the third term using the expansion of \( (e_i' v_i) \) around its mean as in \eqref{eq:evecalign.2}, together with Remark \ref{Error.negpow,mumu0} and Lemma \ref{Le.S4}:
\begin{align*}
\frac{\theta}{\mu}(e_i' v_i) \sum_{n=2}^L \frac{ \mathbb{E}(\underline{g}' W^n e_i) }{ \bar{\mu}^n }
&= \frac{\theta}{\mu} \mathbb{E}(e_i' v_i) \sum_{n=2}^L \frac{ \mathbb{E}(\underline{g}' W^n e_i) }{ \bar{\mu}^n } 
+ O_{hp}\left( \frac{N(\log N)^{\xi/2}}{\theta^4} \right) \\
&= \frac{\theta}{\bar{\mu}} \mathbb{E}(e_i' v_i) \sum_{n=2}^L \frac{ \mathbb{E}(\underline{g}' W^n e_i) }{ \bar{\mu}^n } 
+ O_{hp}\left( \frac{|\mu - \bar{\mu}| \sqrt{N}}{\theta^3} \right) 
+ O_{hp}\left( \frac{N(\log N)^{\xi/2}}{\theta^4} \right) \\
&= \frac{\theta}{\bar{\mu}} \mathbb{E}(e_i' v_i) \sum_{n=2}^L \frac{ \mathbb{E}(\underline{g}' W^n e_i) }{ \bar{\mu}^n } 
+ o_{hp}\left( \frac{1}{\sqrt{N} \theta} \right).
\end{align*}

We now obtain the approximation:
\begin{equation}\label{EVecproTh1.2}
\frac{A_i}{\alpha_i} - \frac{\theta}{\bar{\mu}} \mathbb{E}(e_i' v_i) \sum_{n=2}^L \frac{ \mathbb{E}(\underline{g}' W^n e_i) }{ \bar{\mu}^n }
= \frac{\theta}{\mu}(e_i' v_i)(\underline{g}' e_i) 
+ \frac{\theta}{\mu^2}(e_i' v_i)(\underline{g}' W e_i) 
+ o_{hp}\left( \frac{1}{\sqrt{N} \theta} \right).
\end{equation}

Using again \eqref{eq:evecalign.2}, Remark \ref{Error.negpow,mumu0}, and Lemma \ref{Le.S4} on the first two terms, we upgrade to:
\begin{equation}\label{EVecproTh1.3}
\frac{A_i}{\alpha_i} - \frac{\theta}{\bar{\mu}} \mathbb{E}(e_i' v_i) \sum_{n=0}^L \frac{ \mathbb{E}(\underline{g}' W^n e_i) }{ \bar{\mu}^n }
= \theta \mathbb{E}(e_i' v_i)(\underline{g}' e_i) \left( \frac{1}{\mu} - \frac{1}{\bar{\mu}} \right) 
+ \frac{\theta}{\bar{\mu}^2} \mathbb{E}(e_i' v_i)(\underline{g}' W e_i) 
+ o_{hp}\left( \frac{1}{\sqrt{N} \theta} \right).
\end{equation}

Finally, multiplying both sides by \( \alpha_i \), we obtain:
\begin{align}\label{EVecproTh1.4}
&A_i - \frac{\theta}{\bar{\mu}} \alpha_i \mathbb{E}(e_i' v_i) \sum_{n=0}^L \frac{ \mathbb{E}(\underline{g}' W^n e_i) }{ \bar{\mu}^n }\nonumber\\
&=- \frac{\theta^2}{\bar{\mu}^3} \alpha_i \mathbb{E}(e_i' v_i)(\underline{g}' e_i)(e_i' W e_i) + \frac{\theta}{\bar{\mu}^2} \alpha_i \mathbb{E}(e_i' v_i)(\underline{g}' W e_i) 
+ o_{hp}\left( \frac{1}{\sqrt{N} \theta} \right).
\end{align}

Now, fix \( l \ne i \). Applying similar calculations (along with the results from Lemmas~\ref{Le.S1}, \ref{Le.S2}, \ref{Le.S4} and \eqref{Evec.Ortho.final}), we obtain:
\begin{align*}
\frac{B_l}{\alpha_l} 
&= \frac{\theta}{\mu} (e_l' v_i)\, (\underline{g}' e_l) 
+ \frac{\theta}{\mu^2} (e_l' v_i)\, (\underline{g}' W e_l) 
+ \frac{\theta}{\bar{\mu}} \mathbb{E}(e_l' v_i) \sum_{n=2}^L \frac{ \mathbb{E}(\underline{g}' W^n e_l) }{ \bar{\mu}^n } 
+ o_{hp}\left( \frac{1}{\sqrt{N} \theta} \right).
\end{align*}

Using Lemma~\ref{ConcentrationBound} for \( n = 1 \), Lemma~\ref{Align.bounds}, and Lemma~\ref{Le.S4}, we further approximate:
\begin{align*}
\frac{B_l}{\alpha_l} 
&= \frac{\theta}{\bar{\mu}} (e_l' v_i)\, (\underline{g}' e_l) 
+ \frac{\theta}{\bar{\mu}} \mathbb{E}(e_l' v_i) \sum_{n=2}^L \frac{ \mathbb{E}(\underline{g}' W^n e_l) }{ \bar{\mu}^n } 
+ o_{hp}\left( \frac{1}{\sqrt{N} \theta} \right).
\end{align*}

Thus, by applying \eqref{Evec.Ortho.final} again, we get:
\begin{align}
\nonumber
& \sum_{\substack{1 \le l \le k \\ l \ne i}} B_l 
- \sum_{\substack{1 \le l \le k \\ l \ne i}} \alpha_l \cdot \frac{\theta}{\bar{\mu}} \mathbb{E}(e_l' v_i) \sum_{n=0}^L \frac{ \mathbb{E}(\underline{g}' W^n e_l) }{ \bar{\mu}^n } \\
\nonumber
&\quad = \sum_{\substack{1 \le l \le k \\ l \ne i}} \frac{\theta}{\bar{\mu}} \alpha_l\, (\underline{g}' e_l) \cdot \left( e_l' v_i - \mathbb{E}(e_l' v_i) \right) + o_{hp}\left( \frac{1}{\sqrt{N} \theta} \right) \\
\label{EVecProTh1.Orthopart}
&\quad = \sum_{\substack{1 \le l \le k \\ l \ne i}} \frac{\theta}{\bar{\mu}^2} \sqrt{\alpha_l}\, (\underline{g}' e_l) \cdot \frac{e_l' W e_i}{Y_{ll}} + o_{hp}\left( \frac{1}{\sqrt{N} \theta} \right),
\end{align}
where \( Y_{ll} \) is as defined \eqref{eq:Ymatrix}. Note that like before we replace $\mu_0$ by $\bar{\mu}$ with effecting the $o(1)$ error.

Combining the expansions \eqref{EVecproTh1.4} and \eqref{EVecProTh1.Orthopart}, we reorganize the expression for \( \underline{g}' v \) into the form:
\begin{align}
\underline{g}' v 
=\, &\underbrace{
\frac{\theta}{\bar{\mu}^2} \alpha_i\, \mathbb{E}(e_i' v_i)\, \underline{g}' W e_i 
\;-\; \frac{\theta^2}{\bar{\mu}^3} \alpha_i\, \mathbb{E}(e_i' v_i)\, \underline{g}' e_i \cdot e_i' W e_i 
\;+\; \sum_{\substack{1 \le l \le k \\ l \ne i}} \frac{\theta}{\bar{\mu}^2} \sqrt{\alpha_l}\, (\underline{g}' e_l)\, \frac{e_l' W e_i}{Y_{ll}}
}_{\text{\bf Random part: }\mathcal{R}_N(g)}
\\
&\quad +\; \underbrace{
\sum_{l=1}^k \sum_{n=0}^L \alpha_l\, \frac{\theta}{\bar{\mu}}\, \mathbb{E}(e_l' v_i)\, \frac{ \mathbb{E}(\underline{g}' W^n e_l) }{ \bar{\mu}^n }
}_{\text{\bf Deterministic part: }\mathcal{D}_N(g)}
\;+\; \underbrace{o_{hp}\left( \frac{1}{\sqrt{N} \theta} \right)}_{\text{\bf Error term: }\mathcal{E}_N(g)}.
\label{eq:gprimev-RDE}
\end{align}

We now justify subtracting the deterministic centering term from the expansion \eqref{eq:gprimev-RDE}. Define the residual
\begin{equation}
\label{EVecproTh1.Rem}
\mathbf{R} := \underline g'v_i -\mathcal D_N(g)- \mathcal{R}_N(g).
\end{equation}

Clearly, the expectation satisfies:
\[
\mathbb{E}(\mathbf{R}) = \mathbb{E}(\underline{g}' v_i) - \sum_{l=1}^k\sum_{n=0}^L \alpha_l \frac{\theta}{\bar \mu} \mathbb{E}(e_l' v_i)  \frac{\mathbb{E}(\underline{g}' W^n e_l)}{\bar \mu^n}.
\]

To control the fluctuation of \( \mathbf{R} \), we invoke high-probability bounds. For any \( \delta > 0 \), there exists \( \eta > 1 \) such that
\[
\mathbb{E}|\mathbf{R}| \le o\left( \frac{1}{\sqrt{N} \theta} \right) + \mathbb{E}(\mathbf{R}^2) \cdot \exp\left( - (\log N)^\eta \right).
\]

We now estimate \( \mathbb{E}(\mathbf{R}^2) \) using moment bounds from Lemmas~\ref{Le.S4}, \ref{ExpectationBound}, and related facts. Since \( \mathbf{R} \) involves quadratic forms in \( W \), we compute their variances explicitly.

From symmetry and independence of \( W(a,b) \), we obtain:
\begin{align}
\label{EVecproTh1.5}
\mathbb{E}(e_i' W e_i)^2 
= \frac{2}{N^2} \sum_{1 \le i_1, i_2 \le N} h_i^2\left( \frac{i_1}{N} \right) h_i^2\left( \frac{i_2}{N} \right) f\left( \frac{i_1}{N}, \frac{i_2}{N} \right) 
\to 2 \int_0^1 \int_0^1 h_i^2(x) h_i^2(y) f(x,y)\, dxdy.
\end{align}

For the linear-quadratic term, we compute:
\begin{align}
\nonumber
N \cdot \mathbb{E}(\underline g' W e_i)^2 
&= \frac{1}{N^2} \bigg[
\sum_{i_1, i_2} g^2(\frac{i_1}{N}) h_i^2\left( \frac{i_2}{N} \right) f\left( \frac{i_1}{N}, \frac{i_2}{N} \right) \\
&\quad + \sum_{i_1, i_2} g(\frac{i_1}{N}) g(\frac{i_2}{N}) h_i\left( \frac{i_1}{N} \right) h_i\left( \frac{i_2}{N} \right) f\left( \frac{i_1}{N}, \frac{i_2}{N} \right) \bigg] \notag \\
\label{EVecproTh1.6}
&\to \int_0^1 \int_0^1 g^2(x) h_i^2(y) f(x,y)\, dxdy 
+ \int_0^1 \int_0^1 g(x) g(y) h_i(x) h_i(y) f(x,y)\, dxdy.
\end{align}

Next, for the cross term:
\begin{align}
\nonumber
\sqrt{N} \cdot \mathbb{E}(\underline g' W e_i \cdot e_i' W e_i) 
&= \frac{1}{N^2} \bigg[
\sum_{i_1,i_2} g(\frac{i_1}{N}) h_i\left( \frac{i_1}{N} \right) h_i^2\left( \frac{i_2}{N} \right) f\left( \frac{i_1}{N}, \frac{i_2}{N} \right) \\
&\quad + \sum_{i_1,i_2} h_i^2\left( \frac{i_1}{N} \right) h_i\left( \frac{i_2}{N} \right) g(\frac{i_2}{N}) f\left( \frac{i_1}{N}, \frac{i_2}{N} \right) \bigg] \notag \\
\label{EVecproTh1.7}
&\to 2 \int_0^1 \int_0^1 g(x) h_i(x) h_i^2(y) f(x,y)\, dxdy.
\end{align}

Similarly, for the cross terms involving \( l \ne i \), we have:
\begin{align}
\label{EVecproTh1.9}
\mathbb{E}(e_l' W e_i)^2 
\to \int_0^1 \int_0^1 h_l^2(x) h_i^2(y) f(x,y)\, dxdy 
+ \int_0^1 \int_0^1 h_l(x) h_l(y) h_i(x) h_i(y) f(x,y)\, dxdy.
\end{align}

\begin{align}
\label{EVecproTh1.10}
\mathbb{E}(e_l' W e_i \cdot e_i' W e_i) 
\to 2 \int_0^1 \int_0^1 h_l(x) h_i(x) h_i^2(y) f(x,y)\, dxdy.
\end{align}

\begin{align}
\nonumber
\sqrt{N} \cdot \mathbb{E}(\underline g' W e_i \cdot e_l' W e_i) 
&\to \int_0^1 \int_0^1 g(x) h_l(x) h_i^2(y) f(x,y)\, dxdy \\
\label{EVecproTh1.11}
&\quad + \int_0^1 \int_0^1 g(x) h_i(x) h_i(y) h_l(y) f(x,y)\, dxdy.
\end{align}

Finally, using that \( \|\underline g\|_2 = O(1/\sqrt{N}) \) and collecting all bounds, we obtain:
\[
\mathbb{E}^{1/2}(\mathbf{R}^2) \cdot \exp(-(\log N)^\eta) = o\left( \frac{1}{\sqrt{N} \theta} \right),
\]
and hence
\begin{align}
\label{EVecproTh1.8}
\mathbb{E}|\mathbf{R}| = o\left( \frac{1}{\sqrt{N} \theta} \right).
\end{align}

\begin{equation}\label{eq:VN.fluctuation.decomp}
\langle \mathcal V_N^{(i)}, g \rangle 
= \theta \sqrt{N} \left( \underline{g}' v_i - \mathbb{E}(\underline{g}' v_i) \right) 
= \mathbf{Fluct}_i + \mathbf{Fluct}_{\text{cross}} + \mathcal{E}_N,
\end{equation}
where
\begin{align*}
\mathbf{Fluct}_i 
&:= \frac{\theta^2 \sqrt{N}}{\bar{\mu}^2} \alpha_i\, \mathbb{E}(e_i' v_i)\, \underline{g}' W e_i 
- \frac{\theta^3 \sqrt{N}}{\bar{\mu}^3} \alpha_i\, \mathbb{E}(e_i' v_i)\, \underline{g}' e_i\, e_i' W e_i, \\
\mathbf{Fluct}_{\text{cross}} 
&:= \sum_{\substack{1 \le l \le k \\ l \ne i}} \frac{\theta^2 \sqrt{N}}{\bar{\mu}^2} \sqrt{\alpha_l}\, (\underline{g}' e_l)\, \frac{e_l' W e_i}{Y_{ll}}, \\
\mathcal{E}_N 
&:= o_{hp}\left(1\right).
\end{align*}

\noindent\textbf{Step 2: Functional convergence.}
Again one can apply the Lyapunov CLT to derive the following for each $g \in C_0^\infty(0,1)$,
\[
\langle \mathcal V^{(i)}_N, g \rangle \Rightarrow \mathcal{N}(0, \sigma^2(g)).
\]

The asymptotic variance of $\langle \mathcal V^{(i)}_N, g \rangle$ can be written as:
\begin{small}
\begin{align*}
\sigma^2(g) &= 
2\alpha_i^2 \left( \int_0^1 g(x) h_i(x)\, dx \right)^2 
\int_0^1 \int_0^1 h_i^2(x) h_i^2(y) f(x,y)\, dxdy \\
&\quad + \alpha_i^2 \int_0^1 \int_0^1 g^2(x) h_i^2(y) f(x,y)\, dxdy 
+ \alpha_i^2 \int_0^1 \int_0^1 g(x) h_i(x) g(y) h_i(y) f(x,y)\, dxdy  \\
&\quad - 4\alpha_i^2 \left( \int_0^1 g(x) h_i(x)\, dx \right) 
\int_0^1 \int_0^1 h_i(x) g(x) h_i^2(y) f(x,y)\, dxdy  \\
&\quad + \sum_{\substack{1 \le l \le k \\ l \ne i}} \frac{2 \alpha_i^2 \sqrt{\alpha_l}}{\alpha_i - \alpha_l}
\left( \int_0^1 g(x) h_l(x)\, dx \right)
\left[
\int_0^1 \int_0^1 g(x) h_l(x) h_i^2(y) f(x,y)\, dxdy \right. \nonumber \\
&\qquad\left. + \int_0^1 \int_0^1 g(x) h_i(x) h_i(y) h_l(y) f(x,y)\, dxdy
\right]  \\
&\quad - 4\sum_{\substack{1 \le l \le k \\ l \ne i}} \frac{ \alpha_i^2 \sqrt{\alpha_l} }{\alpha_i - \alpha_l}
\left( \int_0^1 g(x) h_l(x)\, dx \right)
\left( \int_0^1 g(x) h_i(x)\, dx \right)
\int_0^1 \int_0^1 h_l(x) h_i(x) h_i^2(y) f(x,y)\, dxdy  \\
&\quad + \sum_{\substack{1 \le l_1, l_2 \le k \\ l_1, l_2 \ne i}} 
\frac{\alpha_i^2 \sqrt{\alpha_{l_1}} \sqrt{\alpha_{l_2}}}{(\alpha_i - \alpha_{l_1})(\alpha_i - \alpha_{l_2})}
\left( \int_0^1 g(x) h_{l_1}(x)\, dx \right)
\left( \int_0^1 g(x) h_{l_2}(x)\, dx \right) \nonumber \\
&\qquad \times 
\left[
\int_0^1 \int_0^1 h_{l_1}(x) h_{l_2}(x) h_i^2(y) f(x,y)\, dxdy 
+ \int_0^1 \int_0^1 h_{l_1}(x) h_i(x) h_i(y) h_{l_2}(y) f(x,y)\, dxdy
\right]. 
\end{align*}
\end{small}
\vspace{1em}

\noindent\textbf{Step 3: Finite dimensional convergence.}
Let $g_1, g_2, \ldots, g_l \in C[0,1]$, and define(similar to \eqref{def:lowerg}) the discretized vectors $\underline{g}_j \in \mathbb{R}^N$ by
\begin{align}
\underline{g}_j(p) := \int_{(p-1)/N}^{p/N} g_j(t)\, dt.
\end{align}
Fix \( i \in \{1,\ldots,k\} \), and let \( v_i \) denote the normalized eigenvector of \( A \) corresponding to the eigenvalue \( \lambda_i(A) \). Then, as \( N \to \infty \), the vector
\[
\left( \sqrt{N} \theta \left( \underline{g}_j' v_i - \mathbb{E}[\underline{g}_j' v_i] \right) : 1 \le j \le l \right)
\]
converges in distribution to a multivariate Gaussian vector \( (H_1, \ldots, H_l) \in \mathbb{R}^l \), with mean zero and covariance matrix \( \Sigma = (\mathrm{Cov}(H_p, H_q))_{1 \le p,q \le l} \), where
\begin{small}
\begin{align}
\label{covarianceform}
&\mathrm{Cov}(H_p, H_q) = C_k^{(i)}(g_p, g_q)\\
&=2\alpha_i^2 \left( \int_0^1 g_p(x) h_i(x)\, dx \right) \left( \int_0^1 g_q(x) h_i(x)\, dx \right) \int_{[0,1]^2} h_i^2(x) h_i^2(y) f(x,y)\, dxdy \nonumber\\
\nonumber
& + \alpha_i^2 \int_{[0,1]^2} g_p(x) g_q(x) h_i^2(y) f(x,y)\, dxdy 
+ \alpha_i^2 \int_{[0,1]^2} g_p(x) h_i(x) g_q(y) h_i(y) f(x,y)\, dxdy \\
\nonumber
& - 2\alpha_i^2 \left( \int_0^1 g_q(x) h_i(x)\, dx \right) \int_{[0,1]^2} h_i(x) g_p(x) h_i^2(y) f(x,y)\, dxdy \\
\nonumber
& - 2\alpha_i^2 \left( \int_0^1 g_p(x) h_i(x)\, dx \right) \int_{[0,1]^2} h_i(x) g_q(x) h_i^2(y) f(x,y)\, dxdy \\
\nonumber
& + \sum_{\substack{1 \le l \le k \\ l \ne i}} \frac{\alpha_i^2 \sqrt{\alpha_l}}{\alpha_i - \alpha_l} 
\left( \int_0^1 g_q(x) h_l(x)\, dx \right)
\Bigg[
\int_{[0,1]^2} g_p(x) h_l(x) h_i^2(y) f(x,y)\, dxdy \\
\nonumber
& \qquad + \int_{[0,1]^2} g_p(x) h_i(x) h_i(y) h_l(y) f(x,y)\, dxdy
\Bigg] \\
\nonumber
& - 2\sum_{\substack{1 \le l \le k \\ l \ne i}} \frac{\alpha_i^2 \sqrt{\alpha_l}}{\alpha_i - \alpha_l} 
\left( \int_0^1 g_q(x) h_l(x)\, dx \right)
\left( \int_0^1 g_p(x) h_i(x)\, dx \right)
\int_{[0,1]^2} h_l(x) h_i(x) h_i^2(y) f(x,y)\, dxdy \\
\nonumber
& + \sum_{\substack{1 \le l \le k \\ l \ne i}} \frac{\alpha_i^2 \sqrt{\alpha_l}}{\alpha_i - \alpha_l} 
\left( \int_0^1 g_p(x) h_l(x)\, dx \right)
\Bigg[
\int_{[0,1]^2} g_q(x) h_l(x) h_i^2(y) f(x,y)\, dxdy \\
\nonumber
& \qquad + \int_{[0,1]^2} g_q(x) h_i(x) h_i(y) h_l(y) f(x,y)\, dxdy
\Bigg] \\
\nonumber
& - 2\sum_{\substack{1 \le l \le k \\ l \ne i}} \frac{\alpha_i^2 \sqrt{\alpha_l}}{\alpha_i - \alpha_l} 
\left( \int_0^1 g_p(x) h_l(x)\, dx \right)
\left( \int_0^1 g_q(x) h_i(x)\, dx \right)
\int_{[0,1]^2} h_l(x) h_i(x) h_i^2(y) f(x,y)\, dxdy \\
\nonumber
& + \sum_{\substack{1 \le l_1, l_2 \le k \\ l_1, l_2 \ne i}} 
\frac{\alpha_i^2 \sqrt{\alpha_{l_1}} \sqrt{\alpha_{l_2}}}{(\alpha_i - \alpha_{l_1})(\alpha_i - \alpha_{l_2})}
\left( \int_0^1 g_p(x) h_{l_1}(x)\, dx \right)
\left( \int_0^1 g_q(x) h_{l_2}(x)\, dx \right) \\
\nonumber
& \qquad \times \Bigg[
\int_{[0,1]^2} h_{l_1}(x) h_{l_2}(x) h_i^2(y) f(x,y)\, dxdy 
+ \int_{[0,1]^2} h_{l_1}(x) h_i(x) h_i(y) h_{l_2}(y) f(x,y)\, dxdy
\Bigg].
\end{align}
\end{small}

Fix any $\varepsilon > 0$ and $d > \frac{1}{2}$. The finite-dimensional convergence above ensures that for any $l \in \mathbb{N}$ and test functions $g_1, g_2, \ldots, g_l \in \mathcal H^d_0$, we have weak convergence of the finite-dimensional projections of $\mathcal V_N$. To conclude the proof of Theorem~\ref{EVecProMain}, it remains to verify:
\begin{enumerate}
  \item tightness of the sequence $\mathcal V_N$ in $\mathcal{H}^{-d}$;
  \item uniqueness of the limiting distribution.
\end{enumerate}
\noindent\textbf{Step 4: Tightness.}
Tightness depends on the following lemma.

\begin{lemma}[Uniform bound for Fourier modes]\label{EVecproPreTight}
There exists a constant \( C > 0 \) such that
\[
\limsup_{N \to \infty} \sup_{j \ge 1} \mathbb{E}\left( \langle \mathcal V^{(i)}_N , \phi_j \rangle^2 \right) < C.
\]
\end{lemma}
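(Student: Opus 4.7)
\textbf{Proof proposal for Lemma \ref{EVecproPreTight}.} The plan is to apply the decomposition \eqref{eq:VN.fluctuation.decomp} already derived in Step~1 of the proof of Theorem \ref{EVecProMain} with the test function $g = \phi_j$, and then to show that the variance of each of the three resulting pieces (the ``self'' fluctuation $\mathbf{Fluct}_i$, the ``cross'' fluctuation $\mathbf{Fluct}_{\mathrm{cross}}$, and the negligible error $\mathcal{E}_N$) is bounded by a constant which does not depend on $j$ or $N$. Once this is done, the triangle inequality in $L^2(\mathbb{P})$ yields the claimed uniform bound.

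I would first record the key discretization estimate: since $\phi_j \in C_0^\infty(0,1)$ is smooth and $\|\phi_j\|_{L^2}=1$, writing $\underline{\phi_j}(p)=\int_{(p-1)/N}^{p/N}\phi_j(t)\,dt$ one has $\sqrt{N}\,\underline{\phi_j}^\prime e_l \to \langle \phi_j,h_l\rangle_{L^2}=:c_j^{(l)}$ for each $1\le l\le k$, and by Parseval $\sum_j (c_j^{(l)})^2 = \|h_l\|_{L^2}^2 = 1$, so in particular $|c_j^{(l)}|\le 1$. The prefactors $\theta^2\sqrt{N}/\bar\mu^2$ and $\theta^3\sqrt{N}/\bar\mu^3$ appearing in $\mathbf{Fluct}_i$ combine with $\underline{\phi_j}$ exactly so as to multiply the relevant $W$-quadratic forms by quantities of order $1$ (using $\bar\mu\sim\theta\alpha_i$ from Theorem \ref{Th.probedge}). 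Therefore bounding $\mathbb{E}\langle\mathcal V_N^{(i)},\phi_j\rangle^2$ reduces to controlling, uniformly in $j$, the three variance quantities
\[
\Var\!\big(\sqrt{N}\,\underline{\phi_j}^\prime W e_i\big),\qquad \Var(e_i^\prime W e_i),\qquad \Var(e_l^\prime W e_i)\ (l\ne i),
\]
each of which I would compute exactly as in the calculations \eqref{EVecproTh1.5}--\eqref{EVecproTh1.11}. The last two of these have limits that do not depend on $j$ at all, so they contribute an $O(1)$ bound times $(c_j^{(l)})^2\le 1$.

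The remaining term to control, and the main technical point, is the first variance, whose limit equals
\[
\int_0^1\!\int_0^1 \bigl[\phi_j(x)h_i(y)+\phi_j(y)h_i(x)\bigr]^2 f(x,y)\,dx\,dy.
\]
Expanding the square produces three integrals. The two ``diagonal'' ones are of the form $\int\!\int \phi_j^2(x)h_i^2(y)f(x,y)\,dx\,dy$, which are bounded by $\|f\|_\infty\|h_i\|_\infty^2\|\phi_j\|_{L^2}^2 = \|f\|_\infty\|h_i\|_\infty^2$ independently of $j$. The ``cross'' integral
\[
\int_0^1\!\int_0^1 \phi_j(x)\phi_j(y)h_i(x)h_i(y)f(x,y)\,dx\,dy
\]
is the quadratic form $\langle \phi_j h_i,\,T_f(\phi_j h_i)\rangle_{L^2}$, where $T_f$ is the integral operator on $L^2[0,1]$ with kernel $f$. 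Since $f$ is bounded, $T_f$ is a bounded self-adjoint operator with $\|T_f\|_{L^2\to L^2}\le \|f\|_\infty$, and hence this integral is at most $\|f\|_\infty\|\phi_j h_i\|_{L^2}^2 \le \|f\|_\infty\|h_i\|_\infty^2$, again uniformly in $j$.

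Putting these bounds together, every piece of $\mathbb{E}\langle\mathcal V_N^{(i)},\phi_j\rangle^2$ is controlled by a constant depending only on $\|f\|_\infty$, $\|h_i\|_\infty$, $\|h_l\|_\infty$, and the $\alpha_l$'s, plus a residual $o(1)$ coming from $\mathcal{E}_N$ (whose $L^1$-norm was shown in \eqref{EVecproTh1.8} to be $o(1/(\sqrt{N}\theta))$, giving $\Var(\theta\sqrt{N}\mathcal E_N)=o(1)$ uniformly in $j$). The main obstacle is precisely the cross integral above, since a naive application of Cauchy--Schwarz would produce a factor of $\|\phi_j\|_\infty$ (which is unbounded in $j$); the resolution is to interpret it as $\langle\phi_j h_i,T_f(\phi_j h_i)\rangle$ and use the $L^2$-boundedness of $T_f$, which is what makes the orthonormality of $\{\phi_j\}$ sufficient.
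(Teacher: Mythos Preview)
Your overall strategy—plugging $g=\phi_j$ into the decomposition \eqref{eq:VN.fluctuation.decomp} and bounding the second moment of each piece—is exactly what the paper does. However, your execution contains a factual error and a resulting gap, both of which the paper sidesteps with a single observation you have overlooked.

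The factual error is your claim that $\|\phi_j\|_\infty$ is ``unbounded in $j$''. The Dirichlet Laplacian eigenfunctions on $[0,1]$ are explicitly $\phi_j(x)=\sqrt{2}\sin(j\pi x)$, so $\sup_{j\ge1}\|\phi_j\|_\infty=\sqrt{2}$. The paper's proof uses precisely this uniform sup-norm bound: once it is in hand, \emph{every} estimate from Step~1 (the variance computations \eqref{EVecproTh1.5}--\eqref{EVecproTh1.11}, the $\E[\mathbf R^2]$ bound, and the $L^1$ bound \eqref{EVecproTh1.8}) carries over with constants independent of $j$, because those estimates only use $g$ through pointwise bounds $|g(x)|\le\|g\|_\infty$. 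Your operator-theoretic argument for the cross integral via $T_f$ is correct but unnecessary here; it would be the right tool only if one wanted the lemma for a general $L^2$-orthonormal basis without a uniform $L^\infty$ bound.

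The gap is in your handling of the error term. You write that the $L^1$ bound $\E|\mathbf R|=o(1/(\sqrt N\theta))$ from \eqref{EVecproTh1.8} ``gives $\Var(\theta\sqrt N\,\mathcal E_N)=o(1)$'', but an $L^1$ bound does not control a second moment. What is actually needed is the $\E[\mathbf R^2]$ estimate that was computed in Step~1 to derive \eqref{EVecproTh1.8}; that estimate depends on $g$ through quantities like $N\E(\underline g'We_i)^2$, which are bounded using $\|g\|_\infty$. With $\|\phi_j\|_\infty\le\sqrt2$ this is uniform in $j$, and the paper simply invokes this. Without the sup-norm bound your error control is incomplete.
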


\begin{proof}[Proof of Lemma \ref{EVecproPreTight}]
We use the representation from Step 1 with \( k = 1 \) and \( \alpha_1 = 1 \). Fix \( g = \phi_j \in C^\infty[0,1] \) for some \( j \ge 1 \), and define \( \underline{g} \in \mathbb{R}^N \) as in \eqref{def:lowerg}. Then, from Step 1 we obtain:
\begin{align}
\mathbb{E} \langle \mathcal V^{(i)}_N, \phi_j \rangle^2 
&= \mathbb{E}\left( \theta \sqrt{N} (\underline{g}'v_i - \mathbb{E}[\underline{g}'v_i]) \right)^2 
= N\theta^2 \cdot \mathbb{E} \left( \underline{g}'v_i - \mathbb{E}[\underline{g}'v_i] \right)^2 \notag \\
&= \mathbb{E}\left( \mathbb{E}(e^\prime v_i)\frac{\theta}{\mu^2} \underline{g}'We 
- \mathbb{E}(e^\prime v_i)\frac{\theta^2}{\mu^3}\underline{g}'e \cdot e'We 
+ o_{hp}\left(\frac{1}{\sqrt{N}\theta}\right) \right)^2 N\theta^2. \label{EVecproTight1.1}
\end{align}

Using the uniform bound \( \sup_{j \ge 1} \sup_{x \in [0,1]} |\phi_j(x)| \le \sqrt{2} \), we note that the error terms denoted by \( \mathbf{R} \) and \( \mathbf{R}^2 \) in Step 1 are independent of \( j \), uniformly over all \( j \ge 1 \).

Now applying the variance expressions from \eqref{EVecproTh1.5}--\eqref{EVecproTh1.7}, and the error control from \eqref{EVecproTh1.8} and $\E[\mathbf R^2]$, we find that for all \( j \ge 1 \),
\begin{align}
\mathbb{E} \langle \mathcal V^{(i)}_N, \phi_j \rangle^2 
&\le 2 \left[ \frac{1}{N} \sum_{i_1=1}^N h\left( \frac{i_1}{N} \right) \right]^2 \cdot \mathbb{E}^2(e'v) \cdot \frac{\theta^6}{\bar{\mu}^6} \cdot \mathbb{E}(e'We)^2 \notag \\
&\quad + \frac{\theta^4}{\bar{\mu}^4} \mathbb{E}^2(e'v) \cdot \frac{2}{N^2} \left[ \sum_{i_1,i_2=1}^N h^2\left( \frac{i_2}{N} \right) f\left( \frac{i_1}{N}, \frac{i_2}{N} \right) + \sum_{i_1,i_2=1}^N h\left( \frac{i_1}{N} \right) h\left( \frac{i_2}{N} \right) f\left( \frac{i_1}{N}, \frac{i_2}{N} \right) \right] \notag \\
&\quad + 2 \left[ \frac{1}{N} \sum_{i=1}^N h\left( \frac{i}{N} \right) \right] \cdot \mathbb{E}^2(e'v) \cdot \frac{\theta^5}{\bar{\mu}^5} \cdot \frac{1}{N^2} \left[ \sum_{i_1,i_2=1}^N h\left( \frac{i_1}{N} \right) h^2\left( \frac{i_2}{N} \right) f\left( \frac{i_1}{N}, \frac{i_2}{N} \right) \right. \notag \\
&\hspace{4em} \left. + \sum_{i_1,i_2=1}^N h^2\left( \frac{i_1}{N} \right) h\left( \frac{i_2}{N} \right) f\left( \frac{i_1}{N}, \frac{i_2}{N} \right) \right] + o\left( \frac{1}{N\theta^2} \right). \label{EVecproTight1.11}
\end{align}

Taking the limit as \( N \to \infty \), we conclude:
\begin{align}
\limsup_{N \to \infty} \mathbb{E} \langle \mathcal V^{(i)}_N, \phi_j \rangle^2 
&\le 4 \left( \int_0^1 h(x)\,dx \right)^2 \cdot \int_0^1 \int_0^1 h^2(x) h^2(y) f(x,y)\,dxdy \notag \\
&\quad + 2 \left[ \int_0^1 h^2(y) f(x,y)\,dxdy + \int_0^1 h(x) h(y) f(x,y)\,dxdy \right] \notag \\
&\quad + 4 \left( \int_0^1 h(x)\,dx \right) \cdot \int_0^1 h(x) h^2(y) f(x,y)\,dxdy. \label{EVecproTight1.2}
\end{align}

All terms are bounded uniformly in \( j \), and hence we conclude:
\[
\sup_{N \ge 1} \sup_{j \ge 1} \mathbb{E} \langle \mathcal V^{(i)}_N, \phi_j \rangle^2 < \infty.
\]
This completes the proof.
\end{proof}

Using Lemma~\ref{EVecproPreTight}, the Monotone Convergence Theorem, and the definition of the $\mathcal{H}^{-d}$-norm via the orthonormal basis $\{\phi_j\}_{j \ge 1}$, we obtain:
\begin{align}
\label{EVecProMain.1}
\mathbb{E}\|\mathcal V_N^{(i)}\|_{\mathcal{H}^{-d}}^2 = \sum_{j \ge 1} \frac{ \mathbb{E} \langle \mathcal V_N^{(i)}, \phi_j \rangle^2 }{j^{2d}} \le C \sum_{j \ge 1} \frac{1}{j^{2d}} < \infty.
\end{align}

To conclude tightness, we construct a compact subset of $\mathcal{H}^{-d}$ with high probability content. The following compact embedding is standard (see \cite{cipriani2019scaling})
\begin{fact}[Rellich's Theorem]
For $c_1 < c_2$, the embedding $\mathcal{H}^{-c_1} \hookrightarrow \mathcal{H}^{-c_2}$ is compact. In particular, for any $R > 0$, the closed ball $\overline{B_{\mathcal{H}^{-c_1}}(0,R)}$ is compact in $\mathcal{H}^{-c_2}$.
\end{fact}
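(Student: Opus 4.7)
The plan is to prove Rellich's theorem directly from the spectral description of $\mathcal{H}^{-c}$ in the Dirichlet Laplacian eigenbasis $\{\phi_\ell\}_{\ell \ge 1}$, with eigenvalues $0 < \lambda_1 < \lambda_2 < \cdots \to \infty$. Any $h \in \mathcal{H}^{-c}$ admits the expansion $h = \sum_\ell a_\ell \phi_\ell$ with $\|h\|_{\mathcal{H}^{-c}}^2 = \sum_\ell a_\ell^2 \lambda_\ell^{-c}$. Since $c_1 < c_2$ gives $c_1 - c_2 < 0$ and $\inf_\ell \lambda_\ell = \lambda_1 > 0$, the scalar $\lambda_\ell^{c_1-c_2}$ is bounded uniformly in $\ell$ (in fact by $\lambda_1^{c_1-c_2}$). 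This immediately yields the continuous inclusion $\mathcal{H}^{-c_1} \hookrightarrow \mathcal{H}^{-c_2}$; the genuine content of the statement is compactness.

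To prove compactness, I would take an arbitrary sequence $\{h^{(n)}\}_{n \ge 1}$ in the closed ball $\overline{B_{\mathcal{H}^{-c_1}}(0,R)}$, expand $h^{(n)} = \sum_\ell a_\ell^{(n)} \phi_\ell$, and extract a subsequence converging in $\mathcal{H}^{-c_2}$. The uniform bound $\sum_\ell (a_\ell^{(n)})^2 \lambda_\ell^{-c_1} \le R^2$ implies the coordinate bound $|a_\ell^{(n)}| \le R \lambda_\ell^{c_1/2}$, so each scalar sequence $(a_\ell^{(n)})_n$ is bounded in $\mathbb{R}$. A standard Cantor diagonal extraction produces a subsequence (still denoted $\{h^{(n)}\}$) such that $a_\ell^{(n)} \to a_\ell$ for every $\ell \ge 1$, and Fatou's lemma applied to $\sum_\ell (a_\ell^{(n)})^2 \lambda_\ell^{-c_1}$ ensures that the candidate limit $h := \sum_\ell a_\ell \phi_\ell$ lies in $\mathcal{H}^{-c_1}$ with $\|h\|_{\mathcal{H}^{-c_1}} \le R$.

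The main step is to show $\|h^{(n)} - h\|_{\mathcal{H}^{-c_2}} \to 0$ along the extracted subsequence. Given $\varepsilon > 0$, I would split at a truncation level $L$:
\begin{align*}
\|h^{(n)} - h\|_{\mathcal{H}^{-c_2}}^2
= \sum_{\ell \le L}(a_\ell^{(n)} - a_\ell)^2 \lambda_\ell^{-c_2}
+ \sum_{\ell > L}(a_\ell^{(n)} - a_\ell)^2 \lambda_\ell^{-c_1} \lambda_\ell^{c_1-c_2}.
\end{align*}
Using the monotonicity $\lambda_\ell \ge \lambda_L$ for $\ell > L$ together with $c_1 - c_2 < 0$, and the triangle inequality $\|h^{(n)} - h\|_{\mathcal{H}^{-c_1}}^2 \le 4R^2$, the tail is dominated by $4R^2 \lambda_L^{c_1-c_2}$, which can be made less than $\varepsilon/2$ by choosing $L$ large, since $\lambda_L \to \infty$. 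With $L$ fixed, the finite initial sum is made less than $\varepsilon/2$ for $n$ large by pointwise convergence of the coefficients. The sole conceptual obstacle is this uniform tail control, and it rests entirely on the fact $\lambda_\ell \to \infty$, which is precisely the spectral property that distinguishes a compact Sobolev embedding from a merely continuous one. The overall argument is the natural Hilbert-space analogue of the Arzel\`a--Ascoli theorem, with the growing Laplacian eigenvalues supplying the quantitative equicontinuity.
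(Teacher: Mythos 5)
Your proof is correct. The paper itself does not prove this statement: it records it as a known fact and cites \cite{cipriani2019scaling}, so there is no in-paper argument to compare against. Your route --- expanding in the Dirichlet eigenbasis, using the uniform coordinate bound $|a_\ell^{(n)}|\le R\lambda_\ell^{c_1/2}$ to run a Cantor diagonal extraction, invoking Fatou to place the limit in $\overline{B_{\mathcal{H}^{-c_1}}(0,R)}$, and then splitting the $\mathcal{H}^{-c_2}$ norm at a cutoff $L$ with the tail controlled by $4R^2\lambda_L^{c_1-c_2}\to 0$ --- is the standard spectral proof of Rellich compactness in this weighted $\ell^2$ setting, and every step is sound. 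The only remark worth making is that this is precisely what the cited reference would supply, so you have in effect reconstructed the omitted proof rather than found an alternative to it; the key mechanism, as you correctly identify, is that $\lambda_\ell\to\infty$ turns the bounded continuous embedding into a compact one, by making the tail contribution uniformly small over the ball.
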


Applying this with $d > d' > \frac{1}{2}$ and using Markov's inequality, we get:
\begin{align}
\label{EVecProMain.2}
\mathbb{P}\left( \mathcal V_N^{(i)} \notin \overline{B_{\mathcal{H}^{-d'}}(0, R)} \right) \le \frac{ \mathbb{E} \|\mathcal V_N^{(i)}\|_{\mathcal{H}^{-d'}}^2 }{R^2}.
\end{align}

Choosing $R = \sqrt{ \frac{C \sum_{j \ge 1} j^{-2d'}}{\varepsilon} }$ ensures that the above probability is at most $\varepsilon$, thereby establishing tightness of $\mathcal V_N$ in $\mathcal{H}^{-d}$ for any $d > \frac{1}{2}$.

\noindent\textbf{Step 6: Uniqueness of the limit}

Let $\mathcal{E}_g : \mathcal{H}^{-1} \to \mathbb{R}$ denote the evaluation functional defined by $\mathcal{E}_g(h) := \langle h, g \rangle$ for $g \in \mathcal{H}_0^1$. For $m \in \mathbb{N}$ and test functions $f_1, \ldots, f_m \in \mathcal{H}_0^1$, define the vector-valued map:
\[
\mathcal{E}_{f_1, \ldots, f_m}(h) := \left( \langle h, f_1 \rangle, \ldots, \langle h, f_m \rangle \right).
\]

Since $\mathcal{H}_0^1$ is a separable Hilbert space, let $\{g_n\}_{n \ge 1}$ be a countable dense subset (we may assume $g_n \in C_0^\infty(0,1)$ without loss of generality). The metric topology on $\mathcal{H}^{-1}$ is generated by sets of the form:
\[
B_{\mathcal{H}^{-1}}(h_0, \varepsilon) = \bigcup_{k \ge 1} \bigcap_{n \ge 1} \mathcal{E}_{g_n}^{-1} \left( -\infty, \langle h_0, g_n \rangle + \varepsilon - \frac{1}{k} \right].
\]
This implies that the Borel $\sigma$-algebra is generated by evaluations against smooth test functions:
\begin{align}
\label{BorelSobolevDual}
\mathcal{B}_{\mathcal{H}^{-1}} = \sigma\left( \mathcal{E}_{f_1, \ldots, f_m} : f_1, \ldots, f_m \in C_0^\infty(0,1),\ m \ge 1 \right).
\end{align}

Now, suppose $P$ is any weak limit of the sequence of laws $P_N$ of $\mathcal V_N$ in $\mathcal{H}^{-d}$. Then, there exists a subsequence $N_m \to \infty$ such that
\[
P_{N_m} \Rightarrow P \quad \text{in } \mathcal{H}^{-d}.
\]
By the Continuous Mapping Theorem, for any finite collection $f_1, \ldots, f_l \in C_0^\infty(0,1)$,
\[
P_{N_m} \circ \mathcal{E}_{f_1, \ldots, f_l}^{-1} \Rightarrow P \circ \mathcal{E}_{f_1, \ldots, f_l}^{-1}.
\]
From the previous steps we have shown that the left-hand side converges to a multivariate Gaussian:
\[
P_{N_m} \circ \mathcal{E}_{f_1, \ldots, f_l}^{-1} \Rightarrow \mathcal{N}(0, \Sigma),
\]
where $\Sigma$ is the covariance structure given explicitly in \eqref{covarianceform}. Therefore, we conclude:
\begin{align}
\label{EVecprolim1}
P \circ \mathcal{E}_{f_1, \ldots, f_l}^{-1} \sim \mathcal{N}(0, \Sigma).
\end{align}

The same argument holds for any other limit point $Q$, so:
\begin{align}
\label{EVecprolim2}
Q \circ \mathcal{E}_{f_1, \ldots, f_l}^{-1} \sim \mathcal{N}(0, \Sigma).
\end{align}

Finally, since the Borel $\sigma$-algebra is generated by $\{ \mathcal{E}_g : g \in C_0^\infty(0,1) \}$ (see \eqref{BorelSobolevDual}) and all finite-dimensional marginals agree, the $\pi$-$\lambda$ Theorem implies that $P = Q$. Hence, the limiting distribution is unique, and the sequence $\mathcal V_N$ converges in law to a mean-zero Gaussian element of $\mathcal{H}^{-d}$.

This completes the proof.

\end{proof}

\begin{remark}\label{EVecproTh1.GaussianLimit}
The analysis remains valid for any test function \( g \in C[0,1] \). Moreover, Theorem \ref{EVecProMain} implies that when \( k = 1 \) and \( g \in C[0,1] \) is fixed, we have the following central limit theorem:
\[
\sqrt{N}\theta\left(\underline{g}^\prime v_1 - \mathbb{E}(\underline{g}^\prime v_1)\right) \Rightarrow \mathcal{N}(0, \sigma^2),
\]
where the asymptotic variance \( \sigma^2 \) is given, using \eqref{EVecproTh1.5}, \eqref{EVecproTh1.6}, and \eqref{EVecproTh1.7}, by:
\begin{align*}
\sigma^2 =\;& 2\left(\int_0^1 g(x)h(x)\,dx\right)^2 \int_0^1\int_0^1 h^2(x)h^2(y)f(x,y)\,dx\,dy \\
& + \int_0^1\int_0^1 g^2(x) h^2(y) f(x,y)\,dx\,dy \\
& + \int_0^1\int_0^1 g(x)g(y)h(x)h(y)f(x,y)\,dx\,dy \\
& - 4\left(\int_0^1 g(x)h(x)\,dx\right)\left(\int_0^1\int_0^1 h(x)g(x)h^2(y)f(x,y)\,dx\,dy\right).
\end{align*}

In particular, in the special case where \( h \equiv 1 \) and \( f \equiv 1 \), this simplifies to:
\[
\sigma^2 = \int_0^1 g^2(x)\,dx - \left(\int_0^1 g(x)\,dx\right)^2,
\]
which is simply the variance of \( g \) under the uniform measure on \([0,1]\).
\end{remark}

\section{Martingale CLT}\label{section:MartingaleCLT}
The next result establishes a central limit theorem for the quadratic form $e_i^\prime W^2 e_i$, which plays a crucial role in the proof of eigenvector alignment. Beyond its immediate application, this theorem is of independent interest: it provides a clean example of the martingale CLT applied to structured quadratic forms of random matrices with inhomogeneous variance profiles. Moreover, the argument generalizes naturally to higher matrix powers $W^m$, and we expect this technique to have further uses in analyzing functionals of inhomogeneous random graphs and related spectral statistics.

\begin{theorem}\label{Th.MartingaleCLT}
\textbf{Martingale Central Limit Theorem.} For all $i$ such that $1 \leq i \leq k$, we have
\[
\frac{e_i^\prime W^2 e_i - \mathbb{E}(e_i^\prime W^2 e_i)}{\sqrt{N}} \Rightarrow  N\bigg(0,\sigma_2^2\bigg)\,,
\]
where 
\[
\sigma_2^2 = 2\int_{x,y \in [0,1]^3} h_i^2(x) f(x,y) f(y,z) h_i^2(z)\,dx\,dy\, dz.
\]
\end{theorem}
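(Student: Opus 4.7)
I would apply the martingale central limit theorem to the standardised quadratic form
\[
S_N := \frac{1}{\sqrt N}\bigl(e_i' W^2 e_i - \mathbb E(e_i' W^2 e_i)\bigr),
\]
using the row-by-row filtration of the random matrix. Set $\mathcal F_k := \sigma(X_{a,b}: 1 \le a \le k,\, a \le b \le N)$ for $0 \le k \le N$, so that $\mathcal F_0$ is trivial and $\mathcal F_N = \sigma(W)$. With $\mathbb E_k := \mathbb E[\,\cdot\,|\mathcal F_k]$ and $Z_{N,k} := N^{-1/2}(\mathbb E_k - \mathbb E_{k-1})(e_i'W^2 e_i)$, the sequence $(Z_{N,k})_{k=1}^N$ is a martingale difference array and $S_N = \sum_{k=1}^N Z_{N,k}$.

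\textbf{Structure of $Z_{N,k}$.} Expanding $e_i'W^2 e_i = \sum_{a,b,c} e_i(a)e_i(c) W(a,b)W(b,c)$ and tracking which occurrences of row/column $k$ are \emph{new} at step $k$ (i.e.\ variables $X_{k,\ell}$ with $\ell \ge k$) versus \emph{old} (already revealed in $\mathcal F_{k-1}$), one obtains $Z_{N,k} = N^{-1/2}(L_k + R_k)$. Here $L_k$ is linear in $\mathbf x_k := (X_{k,\ell})_{\ell \ge k}$ with $\mathcal F_{k-1}$-measurable coefficients (from cross-pairings of one new entry with one old entry), while $R_k$ is a centred quadratic form in $\mathbf x_k$ (from new-with-new pairings). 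Because $\mathbf x_k$ is independent of $\mathcal F_{k-1}$ and its entries have mean zero, $\mathbb E_{k-1}[L_k R_k]$ vanishes up to easily controlled third-moment corrections, and so $\mathbb E_{k-1}[Z_{N,k}^2] = N^{-1}\bigl(\mathbb E_{k-1}[L_k^2] + \mathbb E_{k-1}[R_k^2]\bigr) + o(1/N)$.

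\textbf{Conditional variance.} The quadratic piece satisfies $R_k \approx \sum_{a,c \ge k} e_i(a) e_i(c)\bigl(X_{k,a}X_{k,c} - \delta_{a,c} f(k/N, a/N)\bigr)$, giving $\mathbb E[R_k^2] \approx 2\bigl(\sum_{c \ge k} e_i(c)^2 f(k/N,c/N)\bigr)^2$; summing in $k$ yields a Riemann sum converging to $2\int_0^1 \bigl(\int_y^1 h_i^2(x) f(x,y)\,dx\bigr)^2\,dy$. The linear piece has coefficients of the form $2 e_i(k) (We_i)^{<k}(\ell) + 2 e_i(\ell) (We_i)^{<k}(k)$, where the superscript restricts the sum defining $We_i$ to indices strictly below $k$; invoking concentration of these $\mathcal F_{k-1}$-measurable quadratic functionals around their means (in the spirit of Lemma~\ref{ConcentrationBound}), one deduces that $N^{-1}\sum_k \mathbb E_{k-1}[L_k^2]$ converges to the complementary triple integral over the region where at least one of $x, z$ lies below $y$. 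Combining the two contributions reconstitutes exactly
\[
\sigma_2^2 = 2\int_{[0,1]^3} h_i^2(x)\, f(x,y)\, f(y,z)\, h_i^2(z)\,dx\,dy\,dz,
\]
so $\sum_{k=1}^N \mathbb E_{k-1}[Z_{N,k}^2] \to \sigma_2^2$ in probability.

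\textbf{Lindeberg condition, conclusion, and main obstacle.} The sub-exponential moment assumption \eqref{eq:momentcond} gives $\mathbb E|Z_{N,k}|^4 = O(N^{-2})$ uniformly in $k$, so $\sum_k \mathbb E|Z_{N,k}|^4 \to 0$, which delivers the Lyapunov (hence conditional Lindeberg) condition. Applying the standard martingale CLT then yields $S_N \Rightarrow \mathcal N(0,\sigma_2^2)$. I expect the main obstacle to be the conditional-variance step: the coefficients of $L_k$ are genuinely $\mathcal F_{k-1}$-measurable random variables, so the computation reduces not to a deterministic Riemann sum but to a randomised one, and one must invoke concentration of quadratic forms on the filtration to replace random coefficients by their deterministic means without spoiling the overall limit. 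The bookkeeping of the several symmetric index configurations arising in the expansion of $\sum_{a,b,c}$ also requires care, so that the $L_k$ and $R_k$ pieces sum precisely to the right coefficient in $\sigma_2^2$ rather than to a fraction of it.
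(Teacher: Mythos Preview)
Your approach is correct and reaches the same destination via a genuinely different route. The paper also uses the martingale CLT, but with two notable differences in execution.

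\textbf{Filtration.} The paper takes the corner-block filtration $\mathcal F_r = \sigma(W_{pq}: 1\le p\le q\le r)$, so at step $r$ the newly revealed entries are $W_{1,r},\ldots,W_{r,r}$ (the $r$th column restricted to the top-left block). Your row filtration $\mathcal F_k = \sigma(X_{a,b}: a\le k)$ instead reveals $X_{k,k},\ldots,X_{k,N}$ at step $k$. Both are legitimate; the paper's choice makes each increment involve fewer new variables, while yours reveals more per step.

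\textbf{Pre-processing versus in-line splitting.} Before introducing any filtration, the paper first decomposes $T_N$ algebraically as $\tfrac{2}{N}M_N + \tfrac{2}{N}A_N + \tfrac{1}{N}B_N$, where $A_N$ (diagonal--off-diagonal products) and $B_N$ (squared entries) are shown to have variance $O(N^2)$ and hence vanish at scale $N^{-3/2}$. Only the cross-term $M_N=\sum_r\sum_{p<r,\,q\neq p,r}W_{pq}W_{qr}h(p/N)h(r/N)$ is then treated as a martingale, and its increments are automatically ``one old factor times one new factor'', so no $L_k/R_k$ split is needed. Your approach skips this pre-processing and instead splits each martingale increment into linear and quadratic pieces in the new variables; this is equally valid but shifts the bookkeeping to a different place.

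\textbf{Conditional variance.} For the convergence of the predictable quadratic variation, the paper computes $\mathbb E[V_N]$ as a Riemann sum and then bounds $\operatorname{Var}(V_N)$ directly by an elementary covariance calculation (no appeal to Lemma~\ref{ConcentrationBound}), obtaining $\operatorname{Var}(V_N)=O(N^{-1})$. Your proposed route via concentration of the random coefficients of $L_k$ would work too and amounts to essentially the same second-moment estimate; the paper's direct variance bound is perhaps cleaner here since the random part of $V_N$ is only quadratic in $W$, not higher order. Your caution about the region bookkeeping in reconstituting $\sigma_2^2$ is well placed: with your filtration the $R_k$ piece covers $\{x>y,\,z>y\}$ and the several components of $L_k$ must patch together the complement, whereas the paper's corner filtration makes the limiting Riemann sum over $[0,1]^3$ emerge more directly.
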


\begin{proof}[Proof of Theorem \ref{Th.MartingaleCLT}]

We fix an index $i$ with $1 \le i \le k$, and for notational simplicity denote $e = e_i$ and $h = h_i$.  Our goal is to study the fluctuations of the quadratic form
\[
T_N \;=\; e' W^2 e \;=\; e_i' W^2 e_i.
\]
We begin by decomposing $T_N$ into sums that reveal a martingale structure.  Expanding $e^\prime W^2 e$ in terms of the matrix entries yields
\[
\begin{aligned}
T_N 
 &= \frac{2}{N}\sum_{r=1}^N\sum_{\substack{p<r\\ q\neq p,r}} W_{pq}W_{qr}\,h\Bigl(\frac{p}{N}\Bigr)\,h\Bigl(\frac{r}{N}\Bigr) \\
 &\quad + \frac{2}{N}\sum_{p=1}^N\sum_{\substack{r=1 \\ r\neq p}}^N W_{pp}W_{pr}\,h\Bigl(\frac{p}{N}\Bigr)\,h\Bigl(\frac{r}{N}\Bigr) \\
 &\quad + \frac{1}{N}\sum_{p,q=1}^N W_{pq}^2\,h^2\Bigl(\frac{p}{N}\Bigr).
\end{aligned}
\]
Here we have used symmetry of the summation to combine the terms $W_{pq}W_{qr}$ for $p<r$ with those for $r<p$, hence the factor $2$.  We denote these three sums by $M_N$, $A_N$, and $B_N$ respectively, so that
\begin{equation}\label{eq:T_N}
T_N =\frac{2}{N}M_N + \frac{2}{N}A_N + \frac{1}{N}B_N.
\end{equation}
Our strategy is to show that $M_N$ is the dominant term contributing to Gaussian fluctuations, while the terms $A_N$ and $B_N$ are of smaller order.  Thus we focus on analyzing $M_N$ first.

\paragraph{\bf Martingale Decomposition of \(M_N\)}

Recall that
\[
M_N = \sum_{r=1}^N \sum_{\substack{p<r\\ q\neq p,r}} W_{pq}W_{qr}\,h\Bigl(\frac{p}{N}\Bigr)\,h\Bigl(\frac{r}{N}\Bigr).
\]
To expose its martingale structure, we introduce partial sums.  For each $r=0,1,\dots,N$, define
\[
M_{N,r} = \sum_{u=1}^r \sum_{\substack{p<u\\ q\neq p,u}} W_{pq}W_{qu}\,h\Bigl(\frac{p}{N}\Bigr)\,h\Bigl(\frac{u}{N}\Bigr),
\]
with the convention $M_{N,0}=0$.  Notice that $M_{N,r}$ accumulates the contributions up to the index $u=r$.  In particular, $M_{N,N} = M_N$.  Writing $M_N$ as a telescoping sum,
\[
M_N = M_{N,N} = \sum_{r=1}^N (M_{N,r} - M_{N,r-1}).
\]
%For each $r\ge1$, the increment $M_{N,r} - M_{N,r-1}$ captures the terms involving $u=r$.
%\[
%M_{N,r} - M_{N,r-1} = \sum_{\substack{p<r\\ q\neq p,r}} W_{pq}W_{qr}\,h\Bigl(\frac{p}{N}\Bigr)\,h\Bigl(\frac{r}{N}\Bigr).
%\]
We will show that the sequence $(M_{N,r})_{r=0}^N$ is a martingale with respect to an increasing filtration.

 Let $\mathcal{F}_r = \sigma(W_{pq}: 1 \le p \le q \le r)$ be the sigma-field generated by the matrix entries in the first $r$ rows and columns.  In particular, $\mathcal{F}_{r-1}$ contains all randomness up to index $r-1$.
Observe that $M_{N, r}-M_{N, r-1}$ can be written as
$$
M_{N, r}-M_{N, r-1}=\sum_{p<r} \sum_{q \leq r-1, q \neq p} W_{p q} W_{q r} h(p / N) h(r / N) .
$$
Here, for each term, $W_{p q}$ (with $p, q<r$ ) is $\mathcal{F}_{r-1}$-measurable, whereas $W_{q r}$ (which involves the new index $r$ ) is independent of $\mathcal{F}_{r-1}$ and has mean zero. Therefore $\mathbb{E}\left[M_{N, r}-M_{N, r-1} \mid \mathcal{F}_{r-1}\right]=0$, so $\left(M_{N, r}, \mathcal{F}_r\right)_{r=0}^N$ is a martingale.

\paragraph{\bf Conditional Variance Computation.}

We next compute the conditional second moment of the martingale increments.  For each $r=1,\dots,N$, consider
\[
\mathbb{E}\bigl[(M_{N,r} - M_{N,r-1})^2 \,\big|\; \mathcal{F}_{r-1}\bigr].
\]
Expanding the square and noting that the $W_{pq}$ are independent with mean zero, we obtain
\begin{equation}\label{eq:condvar}
\mathbb{E}[(M_{N,r} - M_{N,r-1})^2 \mid \mathcal{F}_{r-1}]
 = \mathrm{Var}\Bigl(\sum_{\substack{p<r\\ q\neq p,r}} W_{pq}W_{qr}h\Bigl(\frac{p}{N}\Bigr)h\Bigl(\frac{r}{N}\Bigr)\Bigm| \mathcal{F}_{r-1}\Bigr).
\end{equation}
This variance can be split into two parts depending on whether the index $q$ is at most $r-1$ or strictly greater than $r-1$.  Decomposing, we get
\[
\begin{aligned}
\eqref{eq:condvar}
 &= \mathrm{Var}\Bigl(\sum_{\substack{p<r,\,q\neq p\\ q\le r-1}}W_{pq}W_{qr}h\bigl(\tfrac{p}{N}\bigr)h\bigl(\tfrac{r}{N}\bigr)\Bigm|\mathcal{F}_{r-1}\Bigr)\\
 &\quad + \mathrm{Var}\Bigl(\sum_{\substack{p<r,\,q\neq p\\ q> r-1}}W_{pq}W_{qr}h\bigl(\tfrac{p}{N}\bigr)h\bigl(\tfrac{r}{N}\bigr)\Bigm|\mathcal{F}_{r-1}\Bigr)\\
 &\quad + 2\,\mathrm{Cov}\Bigl(\sum_{\substack{p<r,\,q\neq p\\ q\le r-1}}W_{pq}W_{qr}h\bigl(\tfrac{p}{N}\bigr)h\bigl(\tfrac{r}{N}\bigr),\,
      \sum_{\substack{p<r,\,q\neq p\\ q> r-1}}W_{pq}W_{qr}h\bigl(\tfrac{p}{N}\bigr)h\bigl(\tfrac{r}{N}\bigr)\Bigm|\mathcal{F}_{r-1}\Bigr).
\end{aligned}
\]

Splitting the sum into the cases $q<r$ and $q>r$ (where for $q<r$, $W_{pq}$ is $\mathcal F_{r-1}$–measurable and $W_{qr}$ is independent, and for $q>r$, both $W_{pq}$ and $W_{qr}$ are independent of $\mathcal F_{r-1}$) and computing variances, one obtains:
$$
\begin{aligned}
\mathbb{E}\left[\left(M_{N, r}-M_{N, r-1}\right)^2 \mid \mathcal{F}_{r-1}\right]=h^2 & \left(\frac{r}{N}\right)\left[\sum_{\substack{q=1 \\
r-1}}^{r-1} \sum_{\substack{p=1 \\
p \neq q}}^{r-1} W_{p q}^2 h^2\left(\frac{p}{N}\right) f\left(\frac{q}{N}, \frac{r}{N}\right)\right. \\
& +\sum_{q=1}^{r-1} \sum_{\substack{p, p^{\prime}=1 \\
p, p^{\prime} \neq q}}^{r-1} W_{p q} W_{p^{\prime} q} h\left(\frac{p}{N}\right) h\left(\frac{p^{\prime}}{N}\right) f\left(\frac{q}{N}, \frac{r}{N}\right) \\
& \left.+\sum_{p=1}^{r-1} \sum_{q=r+1}^N h^2\left(\frac{p}{N}\right) f\left(\frac{p}{N}, \frac{q}{N}\right) f\left(\frac{q}{N}, \frac{r}{N}\right)\right] .
\end{aligned}
$$
For convenience, define
\[
V_N \;=\; N^{-3}\sum_{r=1}^N \mathbb{E}\bigl[(M_{N,r} - M_{N,r-1})^2 \mid \mathcal{F}_{r-1}\bigr].
\]
We will show that $V_N$ converges in probability to a constant, which will be the limiting variance of the martingale.

\paragraph{\bf Asymptotic Variance of the Martingale.}
We first compute the expected value of $V_N$.  Taking expectation and noting that the $W_{pq}$ in $V_N$ appear in squared terms, one obtains
\[
\begin{aligned}
\mathbb{E}[V_N] &= N^{-3} \sum_{r=1}^N \Bigl[
   \sum_{\substack{p<r\\ q<r,\,q\neq p}} h^2\Bigl(\tfrac{p}{N}\Bigr)h^2\Bigl(\tfrac{r}{N}\Bigr)\,f\Bigl(\tfrac{p}{N},\tfrac{q}{N}\Bigr)\,f\Bigl(\tfrac{q}{N},\tfrac{r}{N}\Bigr) \\
&\quad + \sum_{\substack{p<p'<r\\ q<r,\,q\neq p,p'}} h\Bigl(\tfrac{p}{N}\Bigr)h\Bigl(\tfrac{p'}{N}\Bigr)h^2\Bigl(\tfrac{r}{N}\Bigr)\,f\Bigl(\tfrac{p}{N},\tfrac{q}{N}\Bigr)\,f\Bigl(\tfrac{q}{N},\tfrac{r}{N}\Bigr) \\
&\quad + \sum_{\substack{p<r\\ q>r}} h^2\Bigl(\tfrac{p}{N}\Bigr)h^2\Bigl(\tfrac{r}{N}\Bigr)\,f\Bigl(\tfrac{p}{N},\tfrac{q}{N}\Bigr)\,f\Bigl(\tfrac{q}{N},\tfrac{r}{N}\Bigr) \Bigr].
\end{aligned}
\]
As $N\to\infty$, Riemann sum approximations show that this sum converges to an integral
\[
\mathbb{E}[V_N] \;\longrightarrow\; \frac12 \int_{[0,1]^3} h^2(x)\,f(x,y)\,f(y,z)\,h^2(z)\,dx\,dy\,dz.
\]
The factor $1/2$ arises from accounting for the two symmetric sums involving $p<r<q$ and $p<q<r$.
%%%

We now verify that $\Var(V_N)$ tends to zero. First, note that for $p_1<q_1$ and $p_2<p_3$ with $q_2\neq p_2,p_3$, the covariance
\[
\Cov\bigl(W_{p_1q_1}^2,\;W_{p_2q_2}W_{p_3q_2}\bigr) = 0,
\]
since distinct entries of $W$ are independent. Hence, by expanding $\Var(V_N)$ one obtains
\begin{align*}
\frac{N^6}{4}\Var(V_N)
&= \Var\Bigl(\sum_{r=1}^N \sum_{p<q<r} W_{pq}^2 \,h^2\bigl(\tfrac{p}{N}\bigr)\,h^2\bigl(\tfrac{r}{N}\bigr)\,f\bigl(\tfrac{q}{N},\tfrac{r}{N}\bigr)\Bigr)\\
&+ \Var\Bigl(\sum_{r=1}^N \sum_{\substack{p<p'<r\\ q<r,\,q\neq p,p'}} W_{pq}W_{p'q}\,h\bigl(\tfrac{p}{N}\bigr)h\bigl(\tfrac{p'}{N}\bigr)\,h^2\bigl(\tfrac{r}{N}\bigr)\,f\bigl(\tfrac{q}{N},\tfrac{r}{N}\bigr)\Bigr).
\end{align*}
We bound each term separately. For the first term, re-indexing gives
\[
\Var\Bigl(\sum_{r=1}^N\sum_{p<q<r} W_{pq}^2 \,h^2\bigl(\tfrac{p}{N}\bigr)\,h^2\bigl(\tfrac{r}{N}\bigr)\,f\bigl(\tfrac{q}{N},\tfrac{r}{N}\bigr)\Bigr)
= \Var\Bigl(\sum_{1\le p<q<N} h^2\bigl(\tfrac{p}{N}\bigr)\,W_{pq}^2\Bigl(\sum_{r=q+1}^N h^2\bigl(\tfrac{r}{N}\bigr)f\bigl(\tfrac{q}{N},\tfrac{r}{N}\bigr)\Bigr)\Bigr).
\]
Using $\E[W_{pq}^4]\le 4^{4B}$ and boundedness of $f,h$, we obtain
\begin{align}
&\Var\Bigl(\sum_{r=1}^N\sum_{p<q<r} W_{pq}^2 \,h^2\bigl(\tfrac{p}{N}\bigr)\,h^2\bigl(\tfrac{r}{N}\bigr)\,f\bigl(\tfrac{q}{N},\tfrac{r}{N}\bigr)\Bigr)\nonumber\\
&\quad\le 4^{4B}\|f\|_{\infty}^4\|h\|_{\infty}^8\sum_{1\le p<q<N}(N-q)^2
= 4^{4B}\|f\|_{\infty}^4\|h\|_{\infty}^8\sum_{q=1}^{N-1}(q-1)(N-q)^2\nonumber\\
&\quad= O(N^4).
\label{MCLT.Var1}
\end{align}
For the second term, one checks that non-zero contributions arise only when indices match suitably. Note that, for $p_1,p_2,p_3,p_4,q,q'$ such that $p_1<p_2$,$p_3 < p_4$, $q \ne p_1,p_2$ and $q' \ne p_3,p_4$, we can have a non-zero contribution from $\Cov(W_{p_1q}W_{p_2q},W_{p_3q'}W_{p_4q'})$ when 
\[
p_1 = p_3, p_2 = p_4, q= q'\, \quad \text{or}\, \quad p_1 = p_4, p_3=p_2 , q=q'\,.
\] Arguing similarly yields
\begin{align}
&\Var\Bigl(\sum_{r=1}^N \sum_{\substack{p<p'<r\\ q<r,\,q\neq p,p'}} W_{pq}W_{p'q}\,h\bigl(\tfrac{p}{N}\bigr)h\bigl(\tfrac{p'}{N}\bigr)\,h^2\bigl(\tfrac{r}{N}\bigr)\,f\bigl(\tfrac{q}{N},\tfrac{r}{N}\bigr)\Bigr)\nonumber\\
&\quad\le 2\,\|f\|_{\infty}^4\|h\|_{\infty}^8 \sum_{p<p'<N,\;q<N}(N-\max\{p',q\})^2\nonumber\\
&\quad= 2\,\|f\|_{\infty}^4\|h\|_{\infty}^8 N^2 \sum_{p<p'<N,\;q<N}\Bigl(1-\frac{p'\vee q}{N}\Bigr)^2\nonumber\\
&\quad\sim N^5 \int_0^1\!\!\int_0^1\!\!\int_0^1 (1-\max\{y,z\})^2\one(x<y)\,dx\,dy\,dz 
= O(N^5).
\label{MCLT.Var2}
\end{align}
Combining \eqref{MCLT.Var1} and \eqref{MCLT.Var2}, we conclude that
\[
\lim_{N\to\infty} \Var(V_N) = 0.
\]

%%%%%

Hence $V_N$ converges in probability to its mean limit:
\[
V_N \;\xrightarrow{P}\; \frac12 \int_{[0,1]^3} h^2(x)\,f(x,y)\,f(y,z)\,h^2(z)\,dx\,dy\,dz.
\]

This identifies the limiting conditional variance of the martingale increments.

\paragraph{\bf Verification of the Lindeberg Condition}

To apply the martingale central limit theorem, we must verify a Lindeberg-type condition.  Set
\[
Y_{N,r} \;=\; N^{-3/2}\,(M_{N,r} - M_{N,r-1}),\quad r=1,\dots,N,
\]
and extend $Y_{N,r}=0$ for $r>N$.  Then
\[
\sum_{r=1}^\infty Y_{N,r} = \frac{M_N}{N^{3/2}}.
\]
We already have from the martingale property that $\mathbb{E}[Y_{N,r}|\mathcal{F}_{r-1}]=0$ and
\begin{align*}
&\sum_{r=1}^\infty \mathbb{E}[Y_{N,r}^2|\mathcal{F}_{r-1}] \\
&= N^{-3}\sum_{r=1}^N \mathbb{E}[(M_{N,r} - M_{N,r-1})^2|\mathcal{F}_{r-1}] 
\xrightarrow{P} 
\frac12\int_{[0,1]^3} h^2(x)\,f(x,y)\,f(y,z)\,h^2(z)\,dx\,dy\,dz.
\end{align*}
Thus the triangular array $(Y_{N,r})$ satisfies the martingale variance condition.  It remains to check the Lindeberg condition:
\[
\sum_{r=1}^\infty \mathbb{E}\bigl[Y_{N,r}^2 \,\mathbf{1}(|Y_{N,r}|>\varepsilon)\bigr] \;\to\; 0 
\quad\text{for each }\varepsilon>0.
\]
Observe that by Minkowski's inequality we have
\[
\mathbb{E}^{1/4}[(M_{N,r}-M_{N,r-1})^4]
\;\le\; \mathbb{E}^{1/4}[Z_{N1}^4] \;+\; \mathbb{E}^{1/4}[Z_{N2}^4],
\]
where we split the sum in $M_{N,r}-M_{N,r-1}$ into the two parts according to $q\le r-1$ or $q>r-1$:
\[
Z_{N1} \;=\; \sum_{\substack{p<r\\ q\le r-1,\,q\neq p}} W_{pq}W_{qr}\,h\Bigl(\tfrac{p}{N}\Bigr)h\Bigl(\tfrac{r}{N}\Bigr),
\quad
Z_{N2} \;=\; \sum_{\substack{p<r\\ q>r-1}} W_{pq}W_{qr}\,h\Bigl(\tfrac{p}{N}\Bigr)h\Bigl(\tfrac{r}{N}\Bigr).
\]
One shows by combinatorial estimates and the bounded moments of $W_{pq}$ that
\[
\mathbb{E}[Z_{N1}^4] = O(N^4), 
\qquad 
\mathbb{E}[Z_{N2}^4] = O(N^4).
\]
It follows that $\mathbb{E}[(M_{N,r}-M_{N,r-1})^4] = O(N^4)$ uniformly in $r$.  Consequently, for any fixed $\varepsilon>0$,
\[
\begin{aligned}
\sum_{r=1}^{N} \mathbb{E}\bigl[Y_{N,r}^2\,\mathbf{1}(|Y_{N,r}|>\varepsilon)\bigr]
&\le \varepsilon^{-2} \sum_{r=1}^{N} \mathbb{E}[Y_{N,r}^4] 
= \frac{\varepsilon^{-2}}{N^6}\sum_{r=1}^N \mathbb{E}[(M_{N,r}-M_{N,r-1})^4] \\
&= \frac{\varepsilon^{-2}}{N^6} \times O(N^5) \;\longrightarrow\; 0
\quad\text{as } N\to\infty.
\end{aligned}
\]
Thus the Lindeberg condition holds, and we can apply the martingale central limit theorem, see for example \cite[Theorem 35.12]{billingsley2017probability}.  We conclude that
\[
\sum_{r=1}^N Y_{N,r} \;\;=\;\; \frac{M_N}{N^{3/2}}
\;\xrightarrow{d}\; N\Bigl(0,\sigma'^2\Bigr),
\]
where
\[
\sigma'^2 = \frac{1}{2}\int_{[0,1]^3} h^2(x)\,f(x,y)\,f(y,z)\,h^2(z)\,dx\,dy\,dz.
\]
This establishes that the scaled martingale term $M_N$ converges in distribution to a mean-zero normal with variance $\sigma'^2$.

\paragraph{\bf Negligibility of Remaining Terms.}

It remains to show that the other terms $A_N$ and $B_N$ in the decomposition \eqref{eq:T_N} are of smaller order than $M_N$ and do not affect the limit.  Recall
\[
A_N = \sum_{p=1}^N\sum_{r\neq p}W_{pp}W_{pr}\,h\Bigl(\tfrac{p}{N}\Bigr)h\Bigl(\tfrac{r}{N}\Bigr), 
\qquad
B_N = \sum_{p,q=1}^N W_{pq}^2\,h^2\Bigl(\tfrac{p}{N}\Bigr).
\]
One can directly compute (or bound) their variances.  Using the boundedness of moments and summing over the index ranges yields
\[
\mathrm{Var}(A_N) = O(N^2),
\qquad 
\mathrm{Var}(B_N) = O(N^2).
\]
Since we are normalizing by $N^{3/2}$ for the central limit theorem, these terms are negligible: $\mathrm{Var}(A_N/N^{3/2}) = O(N^{-1}) \to 0$ and similarly for $B_N$.  Hence
\[
\frac{2}{N}A_N + \frac{1}{N}B_N
= o_{P}(N^{1/2}),
\]
in the sense that it vanishes after scaling by $\sqrt{N}$.

Putting everything together, from the decomposition we have
\[
\frac{T_N - \mathbb{E}[T_N]}{\sqrt{N}}
= \frac{2M_N}{N^{3/2}} + \frac{2(A_N - \mathbb{E}[A_N])}{N^{3/2}} + \frac{B_N - \mathbb{E}[B_N]}{N^{3/2}}.
\]
We have shown that $2M_N/N^{3/2}$ converges in distribution to $N(0,4\sigma'^2)$, while the remaining terms vanish in probability.  Therefore, by Slutsky’s theorem,
\[
\frac{T_N - \mathbb{E}[T_N]}{\sqrt{N}}
\;\xrightarrow{d}\; N\Bigl(0,4\sigma'^2\Bigr).
\]
Recalling the definition of $\sigma'^2$, we see that $4\sigma'^2$ equals
\[
\sigma_2^2 = 2\int_{[0,1]^3} h^2(x)\,f(x,y)\,f(y,z)\,h^2(z)\,dx\,dy\,dz,
\]
which matches the variance stated in Theorem \ref{Th.MartingaleCLT}.  This completes the proof of the Martingale Central Limit Theorem for the quadratic form $e_i' W^2 e_i$.

\end{proof}

\bibliographystyle{apalike}
\bibliography{Bibliography}

\appendix 
\section{Eigenvalue CLT: Proof of \ref{Th.EigenValueCLT}} \label{appendixA}

The proof of Theorem \ref{Th.EigenValueCLT} follows along the lines of \cite{chakrabarty2020eigenvalues} so we provide the details in the appendix. Some of the estimates are crucial in the results about the eigenvectors so we need to reprove some of the crucial lemmas from \cite{chakrabarty2020eigenvalues}.
Let us denote
\[
Z_{j,l,n}:=e_j^\prime W^n e_l\,,1\le j,l\le k\,,n\in \mathbb N \cup \{0\}\,,
\]
Also, let $Y_n$ be the $k\times k$ matrix with entries
\begin{equation}\label{Defn.Yn}
Y_n(j,l) = \sqrt{\alpha_j\alpha_l}\theta Z_{j,l,n}, \qquad 1\le j,l\le k,.
\end{equation}
\begin{remark}\label{EVa.YN.normbound}
An immediate observation from the definition of $Y_n$ is that
\[
    \|Y_n\| = O_{hp}(\theta N^{\frac{n}{2}})\,.
\]
\end{remark}

We now proceed towards the proof of Theorem \ref{Th.EigenValueCLT} through several steps, organized as Lemmas \ref{Le.S1}–\ref{Le.S5}. The key idea is to break the infinite sum in \eqref{K:inf} at $L$, where $L=\lfloor\log N\rfloor$.

\begin{lemma}\label{Le.S1}
\[
\mu=\lambda_i\left(\sum_{n=0}^L\mu^{-n}Y_n\right)+o_{hp}(1)\,.
\]  
\end{lemma}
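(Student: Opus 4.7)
}

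The plan is to truncate the Neumann series representation of $K$ at level $L = \lfloor\log N\rfloor$, then use Weyl's inequality to transfer the truncation error to the eigenvalues. Recall from \eqref{K:inf} that on the high-probability event $\|W\|<\mu$ (Remark \ref{Bound.W/mu}) one has
\[
K \;=\; \sum_{n=0}^{\infty} \mu^{-n} Y_n \;=\; K_L + R_L, \qquad K_L := \sum_{n=0}^L \mu^{-n} Y_n, \qquad R_L := \sum_{n=L+1}^\infty \mu^{-n} Y_n,
\]
and that $\mu = \lambda_i(K)$ w.h.p.\ by Lemma \ref{EVa.combined}. Since $K,K_L$ are $k\times k$ symmetric matrices, Weyl's inequality gives $|\lambda_i(K)-\lambda_i(K_L)| \le \|R_L\|$, so it suffices to show $\|R_L\| = o_{hp}(1)$.

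For the tail bound, I would estimate each summand entrywise: for every $1\le j,\ell\le k$,
\[
|Y_n(j,\ell)| \;=\; \theta\sqrt{\alpha_j\alpha_\ell}\,|e_j' W^n e_\ell| \;\le\; \theta\sqrt{\alpha_j\alpha_\ell}\,\|W\|^n,
\]
and since $k$ is fixed, $\|Y_n\| \le C\theta\,\|W\|^n$ for a universal constant $C>0$ depending only on $k$ and $\max_i\alpha_i$. Combining this with Lemma \ref{lemma:NormBound}, which gives $\|W\|\le M\sqrt{N}$ w.h.p.\ for some $M>0$, and with Theorem \ref{Th.probedge}, which yields $\mu \ge c\theta$ w.h.p.\ for some $c>0$, I obtain
\[
\|R_L\| \;\le\; \sum_{n=L+1}^\infty \mu^{-n}\|Y_n\| \;\le\; C\theta \sum_{n=L+1}^\infty \Bigl(\frac{M\sqrt{N}}{c\theta}\Bigr)^n.
\]

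Assumption \ref{assump:A1} is exactly what is needed to kill this geometric tail: $\sqrt{N}/\theta \le (\log N)^{-\xi}$, so $M\sqrt{N}/(c\theta) \le \tfrac12$ for all large $N$, and hence
\[
\|R_L\| \;\le\; 2C\theta\,\Bigl(\frac{M\sqrt{N}}{c\theta}\Bigr)^{L+1} \;\le\; 2C\theta\,(C')^{L+1}(\log N)^{-\xi(L+1)},
\]
where $C' = M/c$. Since $\theta\le N$ and $L+1 \ge \log N$, the right-hand side is bounded by $2CN\exp\bigl(-\xi(\log N)(\log\log N)(1+o(1))\bigr)$, which decays faster than any polynomial in $N^{-1}$. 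Thus $\|R_L\|=o_{hp}(1)$, proving the lemma.

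The argument is almost entirely mechanical; the only point of care is the explicit verification that the geometric ratio $\sqrt N/\theta$ and the truncation level $L=\lfloor\log N\rfloor$ are chosen so that the tail $\theta\cdot(\sqrt{N}/\theta)^{L+1}$ beats the polynomial factor $\theta\le N$. This is precisely the quantitative role played by the logarithmic exponent $\xi$ in Assumption \ref{assump:A1}, and it is the reason this step forces the lower bound $\sqrt N(\log N)^\xi\preceq\theta_N$ rather than merely $\sqrt N\preceq\theta_N$.
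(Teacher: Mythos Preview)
Your proof is correct and follows essentially the same route as the paper: both invoke Lemma~\ref{EVa.combined} to identify $\mu=\lambda_i(K)$, then bound the tail $\sum_{n>L}\mu^{-n}Y_n$ via $\|Y_n\|\le C\theta\|W\|^n$ together with $\|W\|=O_{hp}(\sqrt N)$ and $\mu\sim\theta$, summing the resulting geometric series. Your explicit appeal to Weyl's inequality and the closing remark on the role of $\xi$ are minor expository additions, not a different argument.
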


\begin{proof}
Using the definitions of $K$ and $Y_n$ from \eqref{eq:VMatrix} and \eqref{Defn.Yn}, respectively, it is immediate that
\[
K=\one(\|W\|<\mu)\sum_{n=0}^\infty\mu^{-n}Y_n\,.
\]
Lemma \ref{EVa.combined} gives that with high probability,
\[\mu = \lambda_i(V) = \lambda_i(\sum_{n=0}^\infty\mu^{-n}Y_n).\]
It then suffices to show
\begin{equation}\label{Le.S1.tail}
\Big\|\sum_{n=L+1}^\infty \mu^{-n}Y_n\Big\| = o_{hp}(1),.
\end{equation}
%Let us denote e=maxe = \max_{1 \le i \le k}(\sup_N|e_N|),. Then by assumption A2, e < \infty,.
Consequently, Remark \ref{EVa.YN.normbound} implies that there exist constants $C, C'>0$ such that
\begin{align*}
\Big\|\sum_{n=L+1}^\infty \mu^{-n}Y_n\Big\| &\le \sum_{n=L+1}^\infty |\mu|^{-n}\,\|Y_n\| \\
&= O_{hp}\Big(\theta \sum_{n=L+1}^{\infty}\Big(\frac{C\sqrt{N}}{\theta}\Big)^n\Big) \\
&= O_{hp}\Big(\frac{(C')^{L+1}\sqrt{N}}{(\log N)^{L\xi}}\Big) \\
&= o_{hp}(1),
\end{align*}
where $\xi>0$ is as in Assumption \ref{assump:A1}. This verifies \eqref{Le.S1.tail} and completes the proof.
\end{proof}

In the next step, we approximate $Y_n$ by $\E(Y_n)$ for $2\leq n \leq L$.

\begin{lemma}\label{Le.S2}
\[
\mu=\lambda_i\left(Y_0 + \frac{Y_1}{\mu} +  \sum_{n=2}^L\mu^{-n}\mathbb E(Y_n)\right)+o_{hp}(1)\,.
\]
\end{lemma}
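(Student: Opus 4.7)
The plan is to deduce Lemma \ref{Le.S2} from Lemma \ref{Le.S1} by replacing each random matrix $Y_n$ with $2 \le n \le L$ by its expectation, and to bound the resulting error in operator norm. Since the matrices involved are all of the fixed size $k\times k$, I will invoke Weyl's inequality (Lipschitz continuity of eigenvalues in operator norm) to reduce the task to establishing
\[
R_N := \Bigl\|\sum_{n=2}^L \mu^{-n}\bigl(Y_n - \E(Y_n)\bigr)\Bigr\| = o_{hp}(1),
\]
which, combined with Lemma \ref{Le.S1}, directly yields the claim.

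To control $R_N$, I would first note that for $k$ fixed the operator norm of a $k\times k$ matrix is comparable (up to a $k$-dependent constant) to the maximum absolute value of its entries, so it suffices to bound each entry. By Lemma \ref{ConcentrationBound}, for each pair $(j,l)\in\{1,\ldots,k\}^2$ and each $2 \le n \le L$,
\[
\bigl|e_j' W^n e_l - \E(e_j' W^n e_l)\bigr| = O_{hp}\bigl(N^{(n-1)/2}(\log N)^{n\xi/4}\bigr),
\]
so $|Y_n(j,l) - \E Y_n(j,l)| = O_{hp}\bigl(\theta N^{(n-1)/2}(\log N)^{n\xi/4}\bigr)$. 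Using Theorem \ref{Th.probedge} to get $\mu^{-1} = O_{hp}(\theta^{-1})$, the $n$-th contribution is bounded by
\[
\mu^{-n}\,|Y_n(j,l) - \E Y_n(j,l)| = O_{hp}\Bigl(\tfrac{N^{(n-1)/2}(\log N)^{n\xi/4}}{\theta^{n-1}}\Bigr).
\]

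The next step is to sum these bounds over $2 \le n \le L$. The ratio of the $(n+1)$-th to the $n$-th bound equals $\sqrt{N}(\log N)^{\xi/4}/\theta$, which, by Assumption \ref{assump:A1} (i.e.\ $\sqrt{N}(\log N)^\xi \preceq \theta$), is at most $C(\log N)^{-3\xi/4} \to 0$. Hence the series behaves geometrically and is dominated by its first ($n=2$) term, yielding
\[
R_N = O_{hp}\Bigl(\tfrac{\sqrt{N}(\log N)^{\xi/2}}{\theta}\Bigr) = O_{hp}\bigl((\log N)^{-\xi/2}\bigr) = o_{hp}(1).
\]
A union bound over the $L = \lfloor\log N\rfloor$ indices $n$, combined with the superpolynomial decay of the exception probabilities from Lemma \ref{ConcentrationBound}, preserves the $o_{hp}$ character.

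The main obstacle is simply that the sum runs up to $L = \lfloor\log N\rfloor$ which is unbounded, so care is needed to ensure both the norm bound and the exception probabilities remain controlled uniformly in $n$. The geometric decay from Assumption \ref{assump:A1} (with $\xi > 4(B+1)\vee 8$) provides exactly the slack required; the argument would fail for significantly smaller $\theta$. Beyond this bookkeeping, no new ideas are needed: the proof is a direct application of Weyl's inequality, Lemma \ref{ConcentrationBound}, Theorem \ref{Th.probedge}, and Lemma \ref{Le.S1}.
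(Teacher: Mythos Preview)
Your proposal is correct and follows essentially the same approach as the paper: reduce via Weyl's inequality and Lemma~\ref{Le.S1} to showing $\sum_{n=2}^L \mu^{-n}\|Y_n-\E(Y_n)\| = o_{hp}(1)$, then control each entry with Lemma~\ref{ConcentrationBound}, use Theorem~\ref{Th.probedge} for $\mu^{-1}=O_{hp}(\theta^{-1})$, sum the resulting geometric series via Assumption~\ref{assump:A1}, and absorb the union bound over $n\le L$ using the superpolynomial tail decay. The paper organizes the same argument through explicit events $E,E',E''$ and the inclusion $E\cap E'\subset E''$, arriving at the identical bound $O_{hp}\bigl(\sqrt{N}(\log N)^{\xi/2}/\theta\bigr)$.
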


\begin{proof}
In view of Lemma \ref{Le.S1}, it suffices to show that
\begin{equation}\label{Le.S2.Target}
\sum_{n=2}^L \mu^{-n}\,\|Y_n - \E(Y_n)\| = o_{hp}(1).
\end{equation}
To this end, define the following events (depending on $N$):
\[E = \bigcap_{2\le n\le L,1\le j,l\le k}\left[\left|Z_{j,l,n}-\mathbb E(Z_{j,l,n})\right|\le N^{(n-1)/2}(\log N)^{n\xi/4}\right],\] 
\[E' = \left[\frac{\mu}{\theta} \ge \frac{\alpha_i}{2} \right],\]
\[ E'' = \left[\sum_{n=2}^L\mu^{-n}\|Y_n-\mathbb E(Y_n)\|\le C_2\frac{\sqrt{N}}{\theta}(\log N)^{\xi/2}\right],\] 

where $C_2>0$ is a constant. We will show that for all large $N$,
\[E \cap E' \subset E''\,.\]

Assume that the event $E \cap E'$ occurs. Then using the definitions above, we have
\begin{align*}
\sum_{n=2}^L \mu^{-n}\|Y_n - \E(Y_n)\|
&\le \theta\alpha_1 \sum_{n=2}^L \mu^{-n} \sum_{1 \le j,l \le k} \big|Z_{j,l,n} - \E(Z_{j,l,n})\big|
&&\text{(\tt{spectral norm $\le$ sum of entries})}\\
&\le \frac{\alpha_1}{\alpha_i}k^2 \sum_{n=2}^L 2^n (\theta\alpha_i)^{-n+1} N^{(n-1)/2} (\log N)^{n\xi/4}
&&\text{(\tt{by the definition of $E$})}\\
&\le \frac{\alpha_1}{2\alpha_i}k^2 \sum_{n=2}^\infty \Big(\frac{2\sqrt{N}}{\theta\alpha_i}\Big)^{n-1} (\log N)^{n\xi/4} \\
&\le\frac{\alpha_1}{2\alpha_i}k^2 \left[1 - \frac{2\sqrt{N}}{\theta\alpha_i}(\log N)^{\xi/4}\right]^{-1} \frac{2\sqrt{N}}{\theta\alpha_i} (\log N)^{\xi/2}.
\end{align*}
In view of Assumption \ref{assump:A1}, the above inclusion $E\cap E' \subset E''$ holds for all large $N$ (by choosing $C_2$ appropriately).

Moreover, Lemma \ref{ConcentrationBound} (a concentration bound for sums of independent entries of $W$) yields that for all $1 \le j,l \le k$ and $2 \le n \le L$, uniformly in $N$,
\[ P\left(\left|Z_{j,l,n}-\mathbb E(Z_{j,l,n})\right|>N^{(n-1)/2}(\log N)^{n\xi/4}\right) = O\left(e^{-(\log N)^{\eta_1}}\right)
\]
for some $\eta_1>0$. Using the complement of the event inclusion and Theorem \ref{Th.probedge}, we thus obtain an $\eta_2>1$ such that
\begin{align*}
&P\Big(\sum_{n=2}^L \mu^{-n}\|Y_n-\E(Y_n)\| > C_2\frac{\sqrt{N}}{\theta}(\log N)^{\xi/2}\Big)\\\
=& \sum_{n=2}^L P\Big(\big|Z_{j,l,n}-\E(Z_{j,l,n})\big| > N^{(n-1)/2}(\log N)^{n\xi/4}\Big) + P\Big( \frac{\mu}{\theta} < \frac{\alpha_i}{2}\Big)\\
=& O\Big(\log N  e^{-(\log N)^{\eta_1}}\Big) + O\Big(e^{-(\log N)^{\eta_2}}\Big)\\
=& o\Big(e^{-(\log N)^{(1+\eta_1)/2}}\Big) + o\Big(e^{-(\log N)^{(1+\eta_2)/2}}\Big).
\end{align*}
Hence \eqref{Le.S2.Target} is established. This completes the proof.
\end{proof}

In the next step, we remove the randomness entirely from
\[
\sum_{n=2}^L\mu^{-n}\mathbb E(Y_n)\,.
\]
by replacing the factor $\mu$ suitably with a deterministic value.

\begin{lemma}\label{Le.S3}
For $N$ large, the deterministic equation
\begin{equation}\label{Le.S3.deteqn}
x = \lambda_i\Big(\sum_{n=0}^L x^{-n}\E(Y_n)\Big), \qquad x>0,,
\end{equation}
has a solution $\tilde{\mu}$, and moreover
\begin{equation}\label{Le.S3.detlim}
\lim_{N \to \infty}\frac{\tilde{\mu}}{\theta} = \alpha_i.
\end{equation}
\end{lemma}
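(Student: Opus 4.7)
The plan is to realize $\tilde{\mu}$ as a fixed point of the continuous scalar map
\[
F(x) := \lambda_i\Big(\sum_{n=0}^L x^{-n}\E(Y_n)\Big), \qquad x>0,
\]
by applying the intermediate value theorem to $G(x) := x - F(x)$ on a shrinking neighborhood of $\theta\alpha_i$. Continuity of $F$ on every compact subset of $(0,\infty)$ is immediate: the matrix inside depends polynomially in $x^{-1}$ on $x$, and the $i$-th eigenvalue of a symmetric matrix is $1$-Lipschitz in the operator norm by Weyl's inequality. So the entire argument reduces to locating a sign change of $G$ in an $o(\theta)$-neighborhood of $\theta\alpha_i$.

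The main estimate I would establish is that $F(x) = \theta\alpha_i + o(\theta)$ uniformly on $[\theta\alpha_i/2,\,2\theta\alpha_i]$. I would split the sum at $n=0$, $n=1$, and $n\ge 2$. For $n=0$, the entry $\E(Y_0)(j,l) = \sqrt{\alpha_j\alpha_l}\,\theta\,e_j'e_l$ converges to $\theta\alpha_j\delta_{jl}$ by Assumption \ref{assump:A2}, so $\lambda_i(\E(Y_0))/\theta\to\alpha_i$. For $n=1$, $\E(Y_1) = 0$ since the entries of $W$ are centered. For $n\ge 2$, Lemma \ref{ExpectationBound} gives $\|\E(Y_n)\|\le k^2\alpha_1\theta(C_1\sqrt N)^n$, so on $[\theta\alpha_i/2,\,2\theta\alpha_i]$ the geometric tail
\[
\Big\|\sum_{n=2}^L x^{-n}\E(Y_n)\Big\| \;\le\; C\theta\sum_{n=2}^L\Big(\frac{2C_1\sqrt N}{\theta\alpha_i}\Big)^n \;=\; O\!\left(\frac{N}{\theta}\right),
\]
which is $o(\theta)$ by Assumption \ref{assump:A1} (since $N/\theta^2\le(\log N)^{-2\xi}\to 0$). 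Combining these three contributions via Weyl's inequality yields the claimed uniform bound.

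Setting $\eta_N := \sup_{x\in[\theta\alpha_i/2,\,2\theta\alpha_i]}|F(x)-\theta\alpha_i|/\theta$, we have $\eta_N\to 0$, and I would take $K_N := 2\eta_N\theta$. For $N$ large, $K_N\le\theta\alpha_i/2$, so $[\theta\alpha_i - K_N,\,\theta\alpha_i + K_N]$ sits inside the interval where the uniform estimate holds, and
\[
G(\theta\alpha_i - K_N) \;\le\; -K_N + \eta_N\theta \;=\; -K_N/2 \;<\; 0, \qquad G(\theta\alpha_i + K_N) \;\ge\; K_N/2 \;>\; 0.
\]
Continuity of $G$ together with the intermediate value theorem then deliver $\tilde\mu\in[\theta\alpha_i-K_N,\,\theta\alpha_i+K_N]$ with $G(\tilde\mu)=0$, producing the required solution of \eqref{Le.S3.deteqn}. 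Since $K_N/\theta = 2\eta_N\to 0$, \eqref{Le.S3.detlim} follows. The only nontrivial step is the uniform tail bound on $\sum_{n\ge 2}x^{-n}\E(Y_n)$, but once Lemma \ref{ExpectationBound} is in hand this reduces to a convergent geometric series; I do not anticipate further obstacles. Uniqueness of $\tilde\mu$ is neither asserted in the statement nor needed for the subsequent Lemmas \ref{Le.S4}--\ref{Le.S5}.
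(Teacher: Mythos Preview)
Your proposal is correct and follows essentially the same route as the paper's proof: both define the same map $F$ (the paper calls it $h$), both note $\E(Y_1)=0$, use Assumption~\ref{assump:A2} for $\E(Y_0)$ and Lemma~\ref{ExpectationBound} for the $n\ge 2$ tail, and then invoke the intermediate value theorem on an interval around $\theta\alpha_i$. The only cosmetic difference is that the paper fixes an arbitrary $\delta>0$, applies IVT on $[\theta(\alpha_i-\delta),\theta(\alpha_i+\delta)]$, and lets $\delta\downarrow 0$ afterwards, whereas you package both existence and the limit in one step via the adaptive width $K_N=2\eta_N\theta$; this is a matter of taste and yields the same conclusion with the same ingredients.
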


\begin{proof}
Consider the function (for each fixed $N$) $h:(0,\infty)\to\bbr$ by
\[
h(x)=\lambda_i\left(\sum_{n=0}^Lx^{-n}\mathbb E(Y_n)\right)\,.
\]
We first show that for every fixed $x>0$,
\begin{equation}\label{Le.S3.detlim.interm}
\lim_{N\to\infty}\frac{h(x\theta)}{\theta} = \alpha_i.
\end{equation}
To see this, note that $\E(Y_1) = 0$ (since $\E[W]=0$ under Assumption \ref{assump:A1}). Thus
\begin{align*}
h\left(x\theta\right)=\lambda_i\left(\mathbb E(Y_0)+\sum_{n=2}^L(x\theta)^{-n}\mathbb E(Y_n)\right)\,.
\end{align*}

By definition of $Y_0$,
\[
Y_0(j,l)=\theta\sqrt{\alpha_j\alpha_l}\,e_j^\prime e_l\,,1\le j,l\le k\,.
\]
Then Assumption \ref{assump:A2} implies
\begin{equation}\label{Le.S3.output1}
\lim_{N\to\infty}\theta^{-1}\E(Y_0) = \mathrm{Diag}(\alpha_1,\alpha_2,\ldots,\alpha_k).
\end{equation}
Moreover, by a moment bound (Lemma \ref{ExpectationBound}), there exists a constant $D>0$ (not depending on $N$) such that for all $2 \le n \le L$,
\begin{equation}\label{EYn.norm}
\|\E(Y_n)\| \le \theta\big(D\sqrt{N}\big)^{n}\,.
\end{equation}
Using the above bound, we obtain
\begin{align*}
\|\sum_{n=2}^L (x\theta)^{-n}\E(Y_n)\| &\le \theta \sum_{n=2}^\infty \Big(\frac{D\sqrt{N}}{x\theta} \Big)^n \\
&=o(1) \qquad \text{as } N \to \infty,
\end{align*}
uniformly for $x>0$. The last line, along with \eqref{Le.S3.output1}, implies
\[
\lim _{N \rightarrow \infty} \frac{1}{\theta}\left(\E\left(Y_0\right)+\sum_{n=2}^L(x \theta)^{-n} \E\left(Y_n\right)\right)=\operatorname{Diag}\left(\alpha_1, \ldots, \alpha_k\right),\]
from which \eqref{Le.S3.detlim.interm} follows. With \eqref{Le.S3.detlim.interm} at our disposal, fix any $0 < \delta < \alpha_i$. Then
\[
\lim _{N \rightarrow \infty} \frac{\theta\left(\alpha_i+\delta\right)-h\left(\theta\left(\alpha_i+\delta\right)\right)}{\theta}=\delta,
\]
and hence for all sufficiently large $N$,
\[
h\left(\theta\left(\alpha_i+\delta\right)\right)<\theta\left(\alpha_i+\delta\right).
\]

Similarly, one finds for large $N$ that $h(\theta(\alpha_i - \delta)) > \theta(\alpha_i - \delta)$. This means that for all sufficiently large $N$, the equation $h(x) = x$ (i.e. \eqref{Le.S3.deteqn}) has a solution $x=\tilde{\mu}$ in the interval $[\theta(\alpha_i-\delta)\,,\theta(\alpha_i+\delta)]$. In particular, $\tilde{\mu}$ exists and satisfies
$\theta\left(\alpha_i-\delta\right) \leq \tilde{\mu} \leq \theta\left(\alpha_i+\delta\right)$
for large $N$. Since $\delta>0$ was arbitrary, this yields \eqref{Le.S3.detlim} (uniqueness of the solution $\tilde{\mu}$ is not needed and not asserted).
\end{proof}

In the next step, we estimate the error incurred when approximating $\mu$ by the deterministic $\tilde{\mu}$ obtained in Lemma \ref{Le.S3}.

\begin{lemma}\label{Le.S4}
\[
\mu-\tilde\mu=O_{hp}\left(\frac{\|Y_1\|}{\theta}+1\right)\,.
\]
\end{lemma}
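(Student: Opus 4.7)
The plan is to compare the two matrix identities provided by Lemmas \ref{Le.S2} and \ref{Le.S3}, pass from eigenvalues to operator norms via Weyl's inequality, and then absorb a self-referential error term.

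First I will introduce the $k\times k$ matrices
\[
M_1 := Y_0 + \mu^{-1} Y_1 + \sum_{n=2}^L \mu^{-n}\E(Y_n), \qquad M_2 := Y_0 + \sum_{n=2}^L \tilde\mu^{-n}\E(Y_n),
\]
so that Lemma \ref{Le.S2} reads $\mu = \lambda_i(M_1) + o_{hp}(1)$, while Lemma \ref{Le.S3} reads $\tilde\mu = \lambda_i(M_2)$ (using $\E(Y_1) = 0$). Both matrices are symmetric because $W$ is, so Weyl's inequality yields
\[
|\mu - \tilde\mu| \le \|M_1 - M_2\| + o_{hp}(1).
\]
The $\mu^{-1}Y_1$ contribution is immediate: since $\mu = \Theta(\theta)$ with high probability by Theorem \ref{Th.probedge}, one has $\|\mu^{-1}Y_1\| = O_{hp}(\|Y_1\|/\theta)$, which already produces the desired $\|Y_1\|/\theta$ term.

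Next I will control the higher-order differences $(\mu^{-n} - \tilde\mu^{-n})\E(Y_n)$ for $2 \le n \le L$. On the high-probability event where both $\mu$ and $\tilde\mu$ lie in $[\theta\alpha_i/2,\,2\theta\alpha_i]$ (guaranteed by Theorem \ref{Th.probedge} and \eqref{Le.S3.detlim}), the mean value theorem gives $|\mu^{-n} - \tilde\mu^{-n}| = O(n\theta^{-n-1}|\mu - \tilde\mu|)$. Combining this with the deterministic bound $\|\E(Y_n)\| \le \theta(D\sqrt{N})^n$ from \eqref{EYn.norm} and summing,
\[
\sum_{n=2}^L |\mu^{-n} - \tilde\mu^{-n}| \cdot \|\E(Y_n)\| = O_{hp}\!\left(\frac{|\mu - \tilde\mu|}{\theta} \sum_{n=2}^\infty n\left(\frac{D\sqrt{N}}{\theta}\right)^{\!n}\right) = O_{hp}\!\left(|\mu - \tilde\mu| \cdot \tfrac{N}{\theta^2}\right),
\]
where convergence of the geometric series and the simplification both use $\sqrt{N}/\theta \to 0$ from Assumption \ref{assump:A1}. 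Since $N/\theta^2 = o(1)$, the entire higher-order contribution is $o_{hp}(|\mu-\tilde\mu|)$.

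Combining the two estimates gives $|\mu-\tilde\mu| \le O_{hp}(\|Y_1\|/\theta + 1) + o_{hp}(|\mu-\tilde\mu|)$. The main (and essentially only) obstacle is the self-referential form of this bound: $|\mu-\tilde\mu|$ appears on both sides. However, the coefficient on the right is a genuinely deterministic $o(1)$ quantity, so one may restrict to the high-probability event where it is at most $1/2$ and rearrange to conclude $|\mu - \tilde\mu| = O_{hp}(\|Y_1\|/\theta + 1)$, as claimed.
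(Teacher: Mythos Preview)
Your proof is correct and follows essentially the same route as the paper's: both start from Lemmas \ref{Le.S2} and \ref{Le.S3}, pass to a norm bound via Weyl's inequality, separate the $\mu^{-1}Y_1$ term, and then show the remaining $\sum_{n\ge 2}(\mu^{-n}-\tilde\mu^{-n})\E(Y_n)$ contribution is $o_{hp}(|\mu-\tilde\mu|)$ before rearranging. The only cosmetic difference is that the paper bounds $|\mu^{-n}-\tilde\mu^{-n}|$ via the explicit algebraic factorization $\mu^{-n}\tilde\mu^{-n}\sum_{j=0}^{n-1}\mu^j\tilde\mu^{n-1-j}$, whereas you invoke the mean value theorem; both give the same $O(n\theta^{-n-1}|\mu-\tilde\mu|)$ estimate on the good event.
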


\begin{proof}
Lemmas \ref{Le.S2} and \ref{Le.S3} jointly imply
\begin{align*}
|\mu-\tilde{\mu}|
&= \Big|\lambda_i\Big(Y_0+\mu^{-1}Y_1+\sum_{n=2}^L\mu^{-n}\E(Y_n)\Big) - \lambda_i\Big(\sum_{n=0}^L \tilde{\mu}^{-n}\E(Y_n)\Big)\Big| + o_{hp}(1)\\
&\le \|\mu^{-1}Y_1\| + |\mu-\tilde{\mu}| \sum_{n=2}^L \mu^{-n}\tilde{\mu}^{-n}\|\E(Y_n)\| \sum_{j=0}^{n-1} \mu^j \tilde{\mu}^{n-1-j} + o_{hp}(1)\\
&= |\mu-\tilde{\mu}| \sum_{n=2}^L \mu^{-n}\tilde{\mu}^{-n}\|\E(Y_n)\| \sum_{j=0}^{n-1} \mu^j \tilde{\mu}^{n-1-j} + O_{hp}\Big(\frac{\|Y_1\|}{\theta} + 1\Big)\,.
\end{align*}
Clearly, to obtain the desired result, it suffices to show that the first term on the RHS above is $o_{hp}(1)$. The above display simplifies to
\begin{equation}\label{Le.S4.Target'}
|\mu-\tilde{\mu}|\Bigg[1 - \sum_{n=2}^L \mu^{-n}\tilde{\mu}^{-n}\|\E(Y_n)\| \sum_{j=0}^{n-1} \mu^j \tilde{\mu}^{n-1-j}\Bigg]
= O_{hp}\Big(\frac{\|Y_1\|}{\theta} + 1\Big).
\end{equation}

As in the proof of Lemma \ref{Le.S3}, Lemma \ref{ExpectationBound} ensures that for all $2 \le n \le L$,
\[
\mathbb \|E(Y_n)\| \le \theta\left(D\sqrt{N}\right)^{n}\,,
\]

for some $D>0$. Also, by \eqref{Le.S3.detlim} (from Lemma \ref{Le.S3}), for all sufficiently large $N$,
$$\frac{\alpha_i}{2}<\frac{\tilde{\mu}}{\theta}<\frac{3 \alpha_i}{2}.$$

By Theorem \ref{Th.probedge}, we similarly have w.h.p.
$$\frac{\alpha_i}{2}<\frac{\mu}{\theta}<\frac{3 \alpha_i}{2}.$$

Thus for large $N$, with high probability,
\begin{align*}
\Big|\mu^{-n}\tilde{\mu}^{-n}\sum_{j=0}^{n-1}\mu^j\tilde{\mu}^{n-1-j}\Big|
&\le \theta^{-2n}\Big(\frac{2}{\alpha_i}\Big)^{2n} \sum_{j=0}^{n-1} \theta^{n-1}\Big(\frac{3\alpha_i}{2}\Big)^j \Big(\frac{3\alpha_i}{2}\Big)^{n-1-j} \\
&\le n\Big(\frac{6}{\theta\alpha_i}\Big)^{n+1}.
\end{align*}
Using this and the estimate on $\|\E(Y_n)\|$ provided by Lemma \ref{ExpectationBound}, we get with high probability
\begin{align*}
\sum_{n=2}^L \mu^{-n}\tilde{\mu}^{-n}\|\E(Y_n)\| \sum_{j=0}^{n-1}\mu^j\tilde{\mu}^{n-1-j}
&\le \sum_{n=2}^L n\Big(\frac{6}{\theta\alpha_i}\Big)^{n+1} \theta\big(D\sqrt{N}\big)^{n} \\
&=\Big(\frac{6}{\alpha_i}\Big)^2 \frac{D\sqrt{N}}{\theta} \sum_{n=2}^\infty n \Big(\frac{6D\sqrt{N}}{\theta\alpha_i}\Big)^{n-1} \\
&= o(1),
\end{align*}
as $N \to \infty$. This in conjunction with \eqref{Le.S4.Target'} completes the proof.
\end{proof}

% \begin{remark}\label{Error.negpow,mumu0}
% Following the line of the proof of Lemma \ref{Le.S4}, one can see that for any fixed $m \ge 1$,
% \begin{equation}
% \mu^{-m} - \mu_1^{-m} = O_{hp}\left(\frac{|\mu-\mu_1|}{\theta^{m+1}}\right)
% \end{equation}
% for any $\mu_1$ such that $\mu_1 \sim \theta$ (in particular, this holds for $\mu_1=\bar{\mu}$ or $\mu_1=\E[\mu]$ defined later).
% \end{remark}

The next step is the most crucial one and will propel us to the penultimate step in proving Theorem \ref{Th.EigenValueCLT}.

\begin{lemma}\label{Le.S5}
$\exists$ a deterministic constant $\bar\mu = \bar\mu_N,$ such that, 
\[
\mu=\bar\mu+\mu^{-1}Y_1(i,i)+o_{hp}\left(\frac{\|Y_1\|}{\theta}+1 \right)\,.
\]    
\end{lemma}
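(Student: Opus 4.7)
The idea is to freeze the scalar $\mu$ in the higher–order terms of Lemma \ref{Le.S2} to the deterministic value $\tilde\mu$ of Lemma \ref{Le.S3}, and then apply first–order eigenvalue perturbation theory to isolate the contribution of $\mu^{-1}Y_1$.

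First I would define the deterministic $k\times k$ matrix
\[
X_N := Y_0 + \sum_{n=2}^L \tilde\mu^{-n}\,\mathbb{E}(Y_n).
\]
Since $\mathbb{E}(Y_1)=0$, Lemma \ref{Le.S3} gives $\lambda_i(X_N)=\tilde\mu$. Rewriting the matrix inside $\lambda_i(\cdot)$ of Lemma \ref{Le.S2} as $X_N + \mu^{-1}Y_1 + R_N$, with
\[
R_N \;=\; \sum_{n=2}^L\bigl(\mu^{-n}-\tilde\mu^{-n}\bigr)\,\mathbb{E}(Y_n),
\]
the mean value theorem applied to $x\mapsto x^{-n}$, together with $\mu,\tilde\mu\asymp\theta$ w.h.p., the estimate $\|\mathbb{E}(Y_n)\|\le\theta(D\sqrt N)^n$ from Lemma \ref{ExpectationBound}, and Lemma \ref{Le.S4} give
\[
\|R_N\| \;\lesssim\; |\mu-\tilde\mu|\sum_{n\ge2} n\Bigl(\frac{D\sqrt N}{\theta}\Bigr)^n \;=\; O_{hp}\!\left(\frac{\sqrt N}{\theta}\Bigl(\tfrac{\|Y_1\|}{\theta}+1\Bigr)\right) \;=\; o_{hp}\!\left(\tfrac{\|Y_1\|}{\theta}+1\right).
\]
Weyl's inequality then yields $\mu = \lambda_i\bigl(X_N + \mu^{-1}Y_1\bigr) + o_{hp}(\|Y_1\|/\theta + 1)$.

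Next I would analyze $\lambda_i(X_N + \mu^{-1}Y_1)$ by analytic perturbation theory. The proof of Lemma \ref{Le.S3} shows that $\theta^{-1}X_N \to \mathrm{Diag}(\alpha_1,\ldots,\alpha_k)$, so for large $N$ the eigenvalues of $X_N$ are simple with spectral gap of order $\theta$, and by the Davis–Kahan $\sin\Theta$ theorem the unit eigenvector $u_i$ of $X_N$ associated with $\tilde\mu$ satisfies $u_i = \mathsf{e}_i + \delta_i$ with $\|\delta_i\|\to 0$, where $\mathsf{e}_i$ denotes the $i$-th canonical basis vector of $\mathbb{R}^k$. Since Lemma \ref{ConcentrationBound} gives $\|\mu^{-1}Y_1\|=O_{hp}(\|Y_1\|/\theta)=O_{hp}((\log N)^{\xi/4})$, which is much smaller than the gap, second–order perturbation theory yields
\[
\lambda_i\bigl(X_N + \mu^{-1}Y_1\bigr) \;=\; \tilde\mu + \mu^{-1}u_i' Y_1 u_i + O_{hp}\!\left(\frac{\|Y_1\|^2/\mu^2}{\theta}\right) \;=\; \tilde\mu + \mu^{-1}u_i' Y_1 u_i + O_{hp}\!\left(\frac{\|Y_1\|^2}{\theta^3}\right).
\]
Expanding $u_i' Y_1 u_i = Y_1(i,i) + 2\mathsf{e}_i' Y_1\delta_i + \delta_i' Y_1\delta_i = Y_1(i,i) + o(\|Y_1\|)$ and using that $\|Y_1\|^2/\theta^3 = (\|Y_1\|/\theta)\cdot(\|Y_1\|/\theta^2)$ is $o_{hp}(\|Y_1\|/\theta)$ under Assumption \ref{assump:A1}, I would set $\bar\mu:=\tilde\mu$ and conclude
\[
\mu \;=\; \bar\mu + \mu^{-1}Y_1(i,i) + o_{hp}\!\bigl(\|Y_1\|/\theta + 1\bigr).
\]

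The main technical delicacy lies in the eigenvector step: one needs $\|\delta_i\|\to 0$ with a rate strong enough to guarantee $\mathsf{e}_i' Y_1\delta_i = o(\|Y_1\|)$, in spite of $\|Y_1\|/\theta$ being allowed to grow like $(\log N)^{\xi/4}$. Because $\theta^{-1}X_N$ converges to a diagonal matrix with \emph{distinct} diagonal entries $\alpha_1,\ldots,\alpha_k$, the Davis–Kahan bound provides $\|\delta_i\|=O(\|X_N-\theta\,\mathrm{Diag}(\alpha_j)\|/\theta)$, which together with the $\|\mathbb{E}(Y_n)\|$ bounds of Lemma \ref{ExpectationBound} yields a rate $O(\sqrt N/\theta)$ that comfortably beats the slowly growing $(\log N)^{\xi/4}$ factor; carrying this uniform tracking through the perturbation expansion is the only nontrivial bookkeeping.
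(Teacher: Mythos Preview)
Your proof is correct and shares the paper's overall strategy: both define the same deterministic matrix $X_N=Y_0+\sum_{n=2}^L\tilde\mu^{-n}\mathbb{E}(Y_n)$, both replace $\mu$ by $\tilde\mu$ in the higher--order terms via Lemma~\ref{Le.S4} to reduce to $\mu=\lambda_i(X_N+\mu^{-1}Y_1)+o_{hp}(\|Y_1\|/\theta+1)$, and both set $\bar\mu=\lambda_i(X_N)=\tilde\mu$.

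The difference lies in how the final eigenvalue perturbation is extracted. You invoke second--order perturbation theory and Davis--Kahan to get $\lambda_i(X_N+\mu^{-1}Y_1)=\tilde\mu+\mu^{-1}u_i'Y_1u_i+O(\|Y_1\|^2/\theta^3)$ and then use $u_i=\mathsf{e}_i+o(1)$. The paper instead uses an elementary shift: it writes $H_1=X_N-X_N(i,i)I$ and $H_2=X_N+\mu^{-1}Y_1-(X_N(i,i)+\mu^{-1}Y_1(i,i))I$, so that $\lambda_i(X_N+\mu^{-1}Y_1)-\bar\mu-\mu^{-1}Y_1(i,i)=\lambda_i(H_2)-\lambda_i(H_1)$, and then shows this difference is $o_{hp}(\|Y_1\|/\theta)$ by directly constructing the eigenvectors of $\theta^{-1}H_1,\theta^{-1}H_2$ (both converge to the same diagonal limit with a simple zero at position $i$) via the block equation, following \cite{chakrabarty2020eigenvalues}. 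Your route is more conceptual and shorter once one accepts the black--box perturbation lemmas; the paper's route is more self--contained and avoids tracking the gap condition for Davis--Kahan. One small remark: your closing paragraph overworks the rate issue---since $\delta_i$ is deterministic with $\|\delta_i\|=o(1)$, the bound $\mu^{-1}|\mathsf{e}_i'Y_1\delta_i|\le\|\delta_i\|\cdot\|Y_1\|/\theta=o_{hp}(\|Y_1\|/\theta)$ is immediate and no quantitative rate on $\delta_i$ is needed.
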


\begin{proof}
With $\tilde{\mu}$ as in Lemma \ref{Le.S3}, consider the deterministic $k\times k$ matrix
\begin{equation}\label{eq:LeS5Xmatrix}
X := \sum_{n=0}^L \tilde{\mu}^{-n}\E(Y_n).
\end{equation}
Using the argument and final estimate from the proof of Lemma \ref{Le.S4}, we have
\begin{align*}
\Big\|X - \sum_{n=0}^L \mu^{-n}\E(Y_n)\Big\|
&\le |\mu-\tilde{\mu}| \sum_{n=2}^L \mu^{-n}\tilde{\mu}^{-n}\|\E(Y_n)\| \sum_{j=0}^{n-1}\mu^j\tilde{\mu}^{,n-1-j} \\
&= o_{hp}\big(|\mu-\tilde{\mu}|\big) \\
&= o_{hp}\Big(\frac{\|Y_1\|}{\theta} + 1\Big).
\end{align*}
Then an application of Lemma \ref{Le.S2} along with Weyl’s inequality gives
\begin{align}
\nonumber
\mu &= \lambda_i\Big(Y_0 + \frac{Y_1}{\mu} + \sum_{n=2}^L \mu^{-n}\E(Y_n)\Big) + o_{hp}(1)\\
\label{Le.S5.approx1}
&=\lambda_i\Big(\frac{Y_1}{\mu} + X\Big) + o_{hp}\Big(\frac{\|Y_1\|}{\theta} + 1\Big).
\end{align}

Define 
\[
H_1=X-X(i,i)I\,,
\]and
\[
H_2=X+\mu^{-1}Y_1-\left(X(i,i)+\mu^{-1}Y_1(i,i)\right)I\,,
\]and
\[
\bar\mu=\lambda_i(X)\,.
\]

By Lemma \ref{Le.S3} (see \eqref{Le.S3.detlim}), $\bar{\mu}$ coincides with $\tilde{\mu}$ and hence
\[
\lim_{N \to \infty}\frac{\bar\mu}{\theta} = \alpha_i\,.
\]
Also, we have
\begin{align*}
\lambda_i\big(\mu^{-1}Y_1 + X\big)
&= \lambda_i(H_2) + X(i,i) + \mu^{-1}Y_1(i,i) \\
&= \lambda_i(H_2) - \lambda_i(H_1) + \bar{\mu} + \mu^{-1}Y_1(i,i).
\end{align*}
Then \eqref{Le.S5.approx1} implies
\[
\mu = \bar\mu + \frac{Y_1(i,i)}{\mu} + \lambda_i(H_2)-\lambda_i(H_1) + o_{hp}\left(\frac{\|Y_1\|}{\theta}+1 \right)
\]
The proof will follow if we can show that
\begin{equation}\label{Le.S5.target}
\lambda_i(H_1) - \lambda_i(H_2) = o_{hp}\Big(\frac{\|Y_1\|}{\theta}\Big).
\end{equation}

If $k=1$, then necessarily $i=1$ and $H_1 = H_2 = 0$, so \eqref{Le.S5.target} holds trivially. For the remainder of the proof, assume $k\ge 2$ (so that there are “other” spike indices $j\neq i$ to consider).

To this end, let
\[
U_1 = \theta^{-1}H_1\,\,\,\,and\,\,\,U_2 = \theta^{-1}H_2
\]
Clearly, using the equivalence of norms in finite dimensions, there exists some constant $\tilde{D} > 0$ such that
\[
\| H_1 -H_2\| = \|\mu^{-1}(Y_1 - Y_1(i,i)I)\| \le \tilde D \left\|\frac{Y_1}{\mu}\right\|
\]
Then, from Theorem \ref{Th.probedge} and Remark \ref{EVa.YN.normbound} we get
\begin{equation}\label{Le.S5.interm2}
|U_1 - U_2| = O_{hp}\Big(\frac{\|Y_1\|}{\theta^2}\Big) = o_{hp}(1),.
\end{equation}
Combining this with a bound on $\E(Y_n)$ (Lemma \ref{ExpectationBound}), we find that for $m=1,2$ and all $1 \le j,l \le k$,
\begin{equation}\label{Le.S5.interm3}
U_m(j,l) = (\alpha_j - \alpha_i)\one{j=l} + o_{hp}(1) \qquad (N \to \infty).
\end{equation}
In particular, as $N\to\infty$, both $U_1$ and $U_2$ converge (with high probability) to the diagonal matrix $\mathrm{Diag}(\alpha_1-\alpha_i,\ldots,\alpha_k-\alpha_i)$. Consequently,
\begin{equation}\label{Le.S5.interm4}
\lambda_i(U_m) = o_{hp}(1),, \qquad m=1,2.
\end{equation}

Let $\tilde{U}_m$ (for $m=1,2$) be the $(k-1)\times(k-1)$ matrix obtained by deleting the $i$th row and column of $U_m$, and let $\tilde{u}_m$ be the $(k-1)\times1$ vector obtained from the $i$th column of $U_m$ by deleting its $i$th entry. It is worth noting that
\begin{align*}
\|\tilde u_m\|=o_{hp}(1)\,,m=1,2
\end{align*}
and
\begin{equation}\label{Le.S5.interm5}
|\tilde{u}_1-\tilde{u}_2| = O_{hp}\Big(\frac{\|Y_1\|}{\theta^2}\Big),
\end{equation}
both of which follow from \eqref{Le.S5.interm3}. Also, \eqref{Le.S5.interm3} and \eqref{Le.S5.interm4} together imply that the matrix $\tilde{U}_m - \lambda_i(U_m)I_{k-1}$ converges (w.h.p.) to
\[
\mathrm{Diag}(\alpha_1-\alpha_i,\ldots,\alpha_{i-1}-\alpha_i,\alpha_{i+1}-\alpha_i,\alpha_k-\alpha_i)\,.
\]
Define $(k-1)$-dimensional vectors
\[
\tilde v_m=-\left[\tilde U_m-\lambda_i(U_m)I_{k-1}\right]^{-1}\tilde u_m\,,m=1,2\,,
\]
and extend each $\tilde{v}_m$ to a $k$-vector $v_m$ by inserting a $1$ in the $i$th position:
\[
v_m=\left[\tilde v_m(1),\ldots,\tilde v_m(i-1),\,1,\,\tilde v_m(i),\ldots,\tilde v_m(k-1)\right]^\prime\,,m=1,2\,.
\]
Now, proceeding exactly along the lines of the proof of Lemma 5.8 in \cite{chakrabarty2020eigenvalues}, we get
\[\
\|v_1-v_2\|=O_{hp}\left(\frac{\|Y_1\|}{\theta^2}\right)\, \text{ and } \,\, \|\tilde v_m\|=o_{hp}(1)\,,m=1,2\,.
\]
(Note also that $U_m(i,i)=0$ and by construction $v_m(i)=1$ for $m=1,2$.)

Therefore, we finally have
\begin{align*}
\big|\lambda_i(U_1)-\lambda_i(U_2)\big|
&= \Big|\sum_{j\ne i} U_1(i,j)v_1(j)- \sum_{j\ne i} U_2(i,j)v_2(j)\Big|\\
&\le \sum_{j\ne i} |U_1(i,j)||v_1(j)-v_2(j)| + \sum_{j\ne i} |U_1(i,j)-U_2(i,j)||v_2(j)|\\
&= O_{hp}\Big(\|\tilde{u}_1\|\|v_1-v_2\| + \|U_1-U_2\|\|\tilde{v}_2\|\Big)\\
&= o_{hp}\Big(\frac{\|Y_1\|}{\theta^2}\Big).
\end{align*}
Thus \eqref{Le.S5.target} is established, and the proof is complete.
\end{proof}

Now we are in a position to complete the proof of Theorem \ref{Th.EigenValueCLT}.

\begin{proof}[Proof of Theorem \ref{Th.EigenValueCLT}]
The proof will be complete if we can show that
\begin{equation}
\label{Th.EigenValueCLT.goal}
\mu - \E(\mu) = \mu^{-1}Y_1(i,i) + o_p(1),.
\end{equation}
Hence, replacing $\bar{\mu}$ by $\E(\mu)$ in Lemma \ref{Le.S5} (which is justified since $\bar{\mu}-\E(\mu)=o(1)$ as we will see) will serve our purpose.

Recalling that $Y_1(i,i) = \alpha_i\theta e_i'W e_i$, Lemma \ref{Le.S5} and Remark \ref{EVa.YN.normbound} together imply
\begin{align}
\nonumber
\mu - \bar{\mu} &= \mu^{-1}Y_1(i,i) + o_{hp}\Big(\frac{\|Y_1\|}{\theta} + 1\Big) \\
\label{Th.EigenValueCLT.interm1}
&= O_{hp}\Big(\frac{\|Y_1\|}{\theta} + 1\Big).
\end{align}
From the proof of Lemma \ref{Le.S5} we also know that

\[
\lim _{N \rightarrow \infty} \frac{\bar{\mu}}{\theta}=\alpha_i.
\]
This ensures
\begin{align}
\Big|\frac{1}{\bar{\mu}}Y_1(i,i) - \frac{1}{\mu}Y_1(i,i)\Big|
&= O_{hp}\Big(|\mu-\bar{\mu}|\frac{\|Y_1\|}{\theta^2}\Big) \nonumber \\
&= o_{hp}\big(|\mu-\bar{\mu}|\big) \nonumber \\
&= o_{hp}\Big(\frac{\|Y_1\|}{\theta} + 1\Big) \label{Th.EigenValueCLT.interm2}\\
&= o_p(1), \label{Th.EigenValueCLT.interm3}
\end{align}
where we used Remark \ref{EVa.YN.normbound} in the second step and \eqref{Th.EigenValueCLT.interm1} in the third.

Using Lemma \ref{Le.S5} and \eqref{Th.EigenValueCLT.interm2}, we get
\begin{equation}\label{Th.EigenValueCLT.interm4}
\mu = \bar{\mu} + \frac{1}{\bar{\mu}}Y_1(i,i) + o_{hp}\Big(\frac{\|Y_1\|}{\theta} + 1\Big).
\end{equation}
Now define
\[
R=\mu-\bar\mu-\frac{Y_1(i,i)}{\bar\mu}\,.
\]
Clearly,
\[
\mathbb E (R)=\mathbb E (\mu)-\bar\mu\,,
\]
and \eqref{Th.EigenValueCLT.interm4} implies that for any $\delta>0$ there exists $\eta>1$ and $N_0$, such that for all $N>N_0$
\begin{align*}
\E|R|
&\le \delta\Big(1 + \frac{\E\|Y_1\|}{\theta}\Big) + \E^{1/2}\Big(\mu-\bar{\mu}-\frac{1}{\bar{\mu}}Y_1(i,i)\Big)^2 O\big(e^{-(\log N)^\eta}\big).
\end{align*}
For the first term on the RHS, recalling the definition of $Y_1$, we note that
\begin{equation}\label{normY1}
\|Y_1\| \le \alpha_1\theta \sum_{j,l=1}^N |e_j'W e_l|,
\end{equation}
and hence
\[
\mathbb E(\|Y_1\|) =  O(\theta\sum_{j,l =1}^k \mathbb E|e_j^\prime W e_l|) \le O(\theta\sum_{j,l =1}^k \Var(e_j^\prime W e_l)) = O(\theta)\\     
\]

Thus
\[
\mathbb E|R|\le o(1)+\mathbb E^{1/2}\left(\mu-\bar\mu-\frac1{\bar\mu}Y_1(i,i)\right)^2 O\left(e^{-(\log N)^\eta}\right)\,.
\]
Let $H := \max_{1 \le j \le k} \sup_{x \in[0,1]}|h_j(x)|$. (Clearly $H<\infty$ by Assumption A1.) Since $|\mu| \le \|A\|$ and $\theta=O(N)$, we have
\[
|\mu|^2 \le \left(\sum_{j,l =1}^N|W(j,l)| + \alpha_1 K H^2N^2\right)^2\,.
\]
Then it follows that
\begin{align*}
\mathbb E\left(\sum_{j,l =1}^N|W(j,l)|\right)^2 &= \mathbb E\left(\sum_{j,l,j^\prime,l^\prime}|W(j,l)||W(j^\prime,l^\prime)|\right)\\
&\le \sum_{j,l,j^\prime,l^\prime}\sqrt{\mathbb E [W(j,l)^2]}\sqrt{\mathbb E [W(j^\prime,l^\prime)^2]} = O(N^4)\,
\end{align*}
and similarly $\E\left[\sum_{j,l=1}^N |W(j,l)|\right] = O(N^2)$. Therefore,
\begin{align}\label{Th.EigenValueCLT.interm5}
\E(\mu^2) = O(N^4).
\end{align}
%Also $\mathrm{Var}(e_i^\prime W e_i)= O(1)$ implies $\E[Y_1(i,i)^2]= O(\theta^2)$. 
Along with \eqref{Le.S3.detlim}, this gives
\begin{align}\label{Th.EigenValueCLT.interm6}
\frac{\E[Y_1(i,i)^2]}{\bar{\mu}^2} = O(1).
\end{align}
Using \eqref{Le.S3.detlim} and Assumption \ref{assump:A1} again, we have for large $N$:
\begin{align}\label{Th.EigenValueCLT.interm7}
\E(\mu\bar{\mu}) \le \bar{\mu}\E^{1/2}(\mu^2) = O(N^3),
\end{align}
and similarly
\begin{align}\label{Th.EigenValueCLT.inter8}
\E\Big(\mu\frac{Y_1(i,i)}{\bar{\mu}}\Big) = O(N^2) \qquad \text{ and } \qquad \E\big(\mu Y_1(i,i)\big) = O(N^3).
\end{align}
Therefore, \eqref{Th.EigenValueCLT.interm5}–\eqref{Th.EigenValueCLT.inter8} together imply
\[
\mathbb E^{1/2}\left(\mu-\bar\mu-\frac1{\bar\mu}Y_1(i,i)\right)^2 = O(N^2) = o\left(e^{(\log N)^\eta}\right)
\]
Hence we get
\[
\mathbb E|R|=o(1)\,,
\]
and therefore
\begin{equation}\label{lim.mu0-mubar}
\E(\mu) = \bar{\mu} + o(1).
\end{equation}
Then \eqref{Th.EigenValueCLT.interm2} and \eqref{Th.EigenValueCLT.interm3} imply that
\begin{align*}
\mu &= \E(\mu) + \frac{1}{\bar{\mu}}Y_1(i,i) + o_p\Big(\frac{\|Y_1\|}{\theta} + 1\Big)\\
&= \E(\mu) + \frac{1}{\mu}Y_1(i,i) + o_p(1).
\end{align*}
This proves \eqref{Th.EigenValueCLT.goal} and completes the proof of Theorem \ref{Th.EigenValueCLT}.
\end{proof}

\begin{proof}[Proof of Theorem \ref{Th.EigenValueMean}]
Henceforth we will use Assumption \ref{assump:A3}, i.e., the functions \( h_1,\ldots,h_k \) are Lipschitz. Along with this Assumption \ref{assump:A4} is also required. We aim to analyze the asymptotic behavior of the  expectation of eigenvalue \( \mu \), given by
\[
\mu = \lambda_i\left(\sum_{n=0}^\infty \mu^{-n} Y_n\right),
\]
as described in Theorem \ref{Th.EigenValueCLT}.

By Lemma \ref{Le.S2}, we have:
\[
\mu = \lambda_i\left(\sum_{n=0}^3 \mu^{-n} \mathbb{E}(Y_n)\right) + O_{hp}\left(\mu^{-1}\|Y_1\| + \sum_{n=4}^L \mu^{-n} \|\mathbb{E}(Y_n)\|\right) + o_{p}(1).
\]
Using Theorems\ref{Th.EigenValueCLT} and \ref{Th.probedge}, we get
\begin{align}
\label{EigenValueMean.0}\mu = \lambda_i\left(\sum_{n=0}^3 \mu^{-n} \mathbb{E}(Y_n)\right) + O_{p}\left(1 + \sum_{n=4}^L \mu^{-n} \|\mathbb{E}(Y_n)\|\right).
\end{align}
Also, from \eqref{EYn.norm} we deduce along lines of the proof of Lemma \ref{Le.S1},
\[
\sum_{n=4}^L \mu^{-n} \|\mathbb{E}(Y_n)\| = O_p\left(\frac{N^{2}}{\theta^3}\right)=o_p(1),
\]by Assumption \ref{assump:A4}.
Hence,
\begin{equation*}
\mu = \lambda_i\left(\sum_{n=0}^3 \mu^{-n} \mathbb{E}(Y_n)\right) + O_p\left(1 + \frac{N^2}{\theta^3}\right).
\end{equation*}
Using the fact that $$\mathbb E(e_j'W^3e_l) \sim N\quad \forall j,l$$ we get,
\begin{equation}
\label{Eq.EigMean.1} \mu = \lambda_i\left(\sum_{n=0}^2 \mu^{-n} \mathbb{E}(Y_n)\right) + O_p\left(1 + \frac{N^2}{\theta^3}\right).
\end{equation}
To simplify further, we again use the bound on \( \mathbb E \|Y_2\| \) from \ref{Le.S2} to get:
\[
\mu = \lambda_i(Y_0) + O_p\left(1 + \frac{N}{\theta} + \frac{N^2}{\theta^3}\right).
\]

We now analyze \( \lambda_i(Y_0) \). Using Fact 5.1 from Lemma 5.2 in \cite{chakrabarty2020eigenvalues}, we know that
\[
|\lambda_i(Y_0) - Y_0(i,i)| \le \sum_{j \ne i} |Y_0(i,j)|.
\]
Since the functions \( h_1,\ldots,h_k \) are Lipschitz, we have:
\[
e_i^\prime e_j = \one(i = j) + O\left(\frac{1}{N}\right).
\]
Using the definition of \( Y_0 \) from \eqref{Defn.Yn}, we obtain:
\[
Y_0(i,j) = 
\begin{cases}
\theta\alpha_i + O\left(\frac{\theta}{N}\right) & \text{if } i = j, \\
O\left(\frac{\theta}{N}\right) & \text{if } i \ne j.
\end{cases}
\]
Therefore,
\begin{equation}
\label{EigenValueMean.6}\lambda_i(Y_0) = \theta\alpha_i + O\left(\frac{\theta}{N}\right).
\end{equation}
Substituting this into the previous equation yields:
\[
\mu = \theta\alpha_i + O_p\left(\frac{\theta}{N} + 1 + \frac{N}{\theta} + \frac{N^2}{\theta^3}\right) = \theta\alpha_i + O_p\left(\frac{N}{\theta}\right).
\]
% This gives the bound:
% \begin{equation}\label{EigMean.App4}
% \mu = \theta\alpha_i + o_{hp}(\sqrt{N}).
% \end{equation}

We now refine the expansion by reintroducing the \( \mathbb{E}(Y_2) \) term. Using the triangle inequality, we estimate
\[
\left\| \mu^{-2} \mathbb{E}(Y_2) - (\theta\alpha_i)^{-2} \mathbb{E}(Y_2) \right\| = O_p\left(\frac{N}{\theta^4} \|\mathbb{E}(Y_2)\|\right) = O_p\left(\frac{N^2}{\theta^3}\right),
\]
where we used that \( \mu^{-2} - (\theta\alpha_i)^{-2} = O_p(N/\theta^4) \), and \( \|\mathbb{E}(Y_2)\| = O(N\theta) \), where the later bound follows from Lemma \ref{ExpectationBound}.

Substituting this into \eqref{Eq.EigMean.1}, we finally get:
\begin{align}
\label{EigenValueMean.5}\mu = \lambda_i\left(Y_0 + \frac{\mathbb{E}(Y_2)}{(\theta\alpha_i)^2}\right) + O_p\left(1 + \frac{N^2}{\theta^3}\right).
\end{align}

Thus we have derived the required approximation for \( \mu \). It remains to connect this to the mean of the eigenvalue. From Theorem \ref{Th.EigenValueCLT}, we have:
\[
\mu - \mathbb{E}(\mu) = O_p(1).
\]
It follows from \eqref{EigenValueMean.5}
\[
\lambda_i(B) - \mu = O_p\left(1 + \frac{N^2}{\theta^3}\right),
\]
where \( B := Y_0 + \frac{\mathbb{E}(Y_2)}{(\theta\alpha_i)^2} \) is the deterministic approximation matrix.

Combining the two bounds, we conclude:
\[
\lambda_i(B) - \mathbb{E}(\mu) = O\left(1 + \frac{N^2}{\theta^3}\right).
\]
Now, appeal to equations \eqref{lim.mu0-mubar}, \eqref{EigenValueMean.6} and \eqref{EigenValueMean.5} completes the proof.
\end{proof}
\begin{remark}
Observe that, if we use Lemma \ref{ConcentrationBound} in \eqref{EigenValueMean.0}, then, \eqref{EigenValueMean.5} provides a more precise approximation of closeness of $\mu$ to the edge of the spectrum than, Theorem \ref{Th.probedge}, i.e 
\begin{equation}\label{approx.mu/theta}
\frac{\mu}{\theta\alpha_i} = 1+ o_{hp}\left(\frac{\sqrt{N}}{\theta}\right).  
\end{equation}
This is because assumption \ref{assump:A3} was not plugged in while proving Theorem \ref{Th.probedge}.
\end{remark}

\section{Technical lemmas} \label{appendix:B}
\begin{proof}[Proof of Lemma \ref{lemma:NormBound}]
Let \( \lambda_1 \) and \( \lambda_n \) denote the largest and smallest eigenvalues of \( W_N \), respectively. By our assumptions, the entries of \( W_N \) satisfy the moment conditions required by \cite[Theorem 2.1.22]{anderson2010introduction}. Using the notation from \cite{anderson2010introduction}, define the normalized moment of the empirical spectral distribution as
\[
\langle L_N, x^k \rangle := \frac{1}{N^{k/2 + 1}} \mathbb{E}(\mathrm{tr}(W_N^k)).
\]
Then, by the derivation in \cite[Theorem 2.1.22]{anderson2010introduction}, for some constant \( \tilde{B} = \tilde{B}(B) \), we have:
\[
\langle L_N, x^{2n} \rangle \le (2\sqrt{\|f\|_\infty})^{2n} \sum_{j=0}^n \left( \frac{n^{\tilde{B}}}{N} \right)^j.
\]

Now define \( n = n(N) := \lfloor (\log N)^3 \rfloor \). Observe that as \( N \to \infty \),
\[
\frac{n(N)}{\log N} \to \infty \quad \text{and} \quad \frac{n(N)}{N} \to 0.
\]
Fix any constant \( \tilde{C} > 2\sqrt{\|f\|_\infty} \). By Markov's inequality, we get:
\begin{align*}
\mathbb{P}(\lambda_1 > \tilde{C}) 
&= \mathbb{P}\left( \|W_N\| > \tilde{C} \right) 
\le \frac{\mathbb{E}(\mathrm{tr}(W_N^{2n}))}{\tilde{C}^{2n}} 
= N \cdot \frac{\langle L_N, x^{2n} \rangle}{\tilde{C}^{2n}} \cdot N^{n},\\
&\le N \cdot \left( \frac{2\sqrt{\|f\|_\infty}}{\tilde{C}} \right)^{2n} \sum_{j=0}^{n} \left( \frac{n^{\tilde{B}}}{N} \right)^j.
\end{align*}

Since \( n = o(N) \), the sum is \( O(1) \), and hence:
\begin{align*}
\mathbb{P}(\lambda_1 > \tilde{C}) 
&\le C_1 N \left( \frac{2\sqrt{\|f\|_\infty}}{\tilde{C}} \right)^{2n} \\
&= C_1 \exp \left( \log N + 2n \log \left( \frac{2\sqrt{\|f\|_\infty}}{\tilde{C}} \right) \right) \\
&= C_1 \exp \left( -a (\log N)^3 \right)
\end{align*}
for some constant \( a > 0 \), provided \( \tilde{C} > 2\sqrt{\|f\|_\infty} \) is fixed.

Therefore,
\begin{equation} \label{norm.upper}
\mathbb{P} \left( \lambda_1 > \tilde{C} \right) \le C_1 \exp \left( -a (\log N)^3 \right).
\end{equation}

A symmetric argument applied to \( -W_N \) yields a similar bound for the smallest eigenvalue:
\begin{equation} \label{norm.lower}
\mathbb{P} \left( \lambda_n < -\bar{C} \right) \le C_2 \exp \left( -b (\log N)^3 \right),
\end{equation}
for some constants \( \bar{C} > 2\sqrt{\|f\|_\infty} \) and \( b > 0 \).

Now let \( C := \max(\tilde{C}, \bar{C}) \) and \( c := \min(a, b) \). Combining \eqref{norm.upper} and \eqref{norm.lower}, we get:
\[
\mathbb{P} \left( \left\| \frac{W_N}{\sqrt{N}} \right\| > C \right) 
= \mathbb{P} \left( \lambda_1 > C \sqrt{N} \,\, \text{or} \,\, \lambda_n < -C \sqrt{N} \right) 
= O\left( \exp(-c (\log N)^3) \right),
\]
which completes the proof.
\end{proof}

\begin{proof}[Proof of Lemma \ref{ExpectationBound}]
As in the proof of the previous lemma, we consider the high-probability event
\[
E_N := \left\{ \|W_N\| \le C\sqrt{N} \right\}
\]
for some constant \( C > 0 \). By Lemma~\ref{lemma:NormBound}, we have
\[
\mathbb{P}(E_N^c) \le e^{-a (\log N)^3}
\]
for some constant \( a > 0 \).

On the event \( E_N \), we trivially have
\[
|\mathbb{E}(e_1^\prime W^n e_2 \cdot \mathbf{1}_{E_N})| \le (C\sqrt{N})^n.
\]

It remains to show that the contribution from the complement event \( E_N^c \) is negligible. For this, we estimate the second moment \( \mathbb{E}(e_1^\prime W^n e_2)^2 \).

Write out the full expansion:
\[
(e_1^\prime W^n e_2)^2 = \sum_{i_1, \ldots, i_{n+1}} \sum_{j_1, \ldots, j_{n+1}} 
\left[ e_1(i_1) \left( \prod_{l=1}^{n} W(i_l, i_{l+1}) \right) e_2(i_{n+1}) \right]
\left[ e_1(j_1) \left( \prod_{l=1}^{n} W(j_l, j_{l+1}) \right) e_2(j_{n+1}) \right].
\]

Let \( S \) denote a typical summand in the above expression. Using Hölder's inequality and the moment assumptions on \( W \), we get:
\[
\mathbb{E}(|S|^{2n}) \le \frac{H^{8n}}{N^{4n}} (2n)^{4Bn^2},
\]
where \( H \) and \( B \) are constants depending on the uniform moment bound on the entries of \( W \).

Hence, for \( 2 \le n \le L \) and \( N \) sufficiently large, we can find a constant \( C_1 > 0 \) such that
\begin{align*}
\mathbb{E}(e_1^\prime W^n e_2)^2 
&\le H^4 \left( N (2n)^B \right)^{2n} 
\le H^4 \cdot (N^{B+1})^{2n} 
= N^{n C_1}.
\end{align*}

Now using Cauchy–Schwarz inequality, we estimate:
\begin{align*}
\mathbb{E}(e_1^\prime W^n e_2 \cdot \mathbf{1}_{E_N^c}) 
&\le \left[ \mathbb{E}(e_1^\prime W^n e_2)^2 \right]^{1/2} \cdot \left[ \mathbb{P}(E_N^c) \right]^{1/2} \\
&\le \left( N^{n C_1} \right)^{1/2} \cdot e^{- \frac{a}{2} (\log N)^3 } \\
&= \exp\left( n C_1 \log N / 2 - \frac{a}{2} (\log N)^3 \right).
\end{align*}

Since \( n \le L = \lfloor \log N \rfloor \), we have:
\[
\mathbb{E}(e_1^\prime W^n e_2 \cdot \mathbf{1}_{E_N^c}) 
\le \exp\left( - (\log N)^3 \left[ \frac{a}{2} - \frac{C_1}{2 \log N} \right] \right),
\]
which vanishes as \( N \to \infty \). Thus, for \( N \) sufficiently large,
\begin{equation}\label{ExpBound.Negligible}
\mathbb{E}(e_1^\prime W^n e_2 \cdot \mathbf{1}_{E_N^c}) \le \exp\left( -(\log N)^3 \left[ \frac{a}{2} - o(1) \right] \right) \to 0.
\end{equation}

Combining the bounds on \( \mathbb{E}(e_1^\prime W^n e_2 \cdot \mathbf{1}_{E_N}) \) and \( \mathbb{E}(e_1^\prime W^n e_2 \cdot \mathbf{1}_{E_N^c}) \), we conclude that:
\[
\mathbb{E}(e_1^\prime W^n e_2) \le (C\sqrt{N})^n + o(1).
\]

This completes the proof.
\end{proof}

\begin{proof}[Proof of Lemma \ref{Align.bounds}]
We prove the claims separately for the cases \( j \ne i \) and \( j = i \).

\smallskip
\noindent\textbf{Case 1: Off-diagonal alignment, \( j \ne i \).}  
From the master equation \eqref{eq.master1}, we have:
\[
e_j^\prime v = \frac{\theta}{\mu} \sum_{l=1}^{k} \alpha_l \cdot e_j^\prime \left(I - \frac{W}{\mu}\right)^{-1} e_l.
\]
Isolating the \( l = j \) term, this becomes:
\[
e_j^\prime v = \frac{\theta \alpha_j}{\mu} \cdot e_j^\prime \left(I - \frac{W}{\mu}\right)^{-1} e_j + \frac{\theta}{\mu} \sum_{l \ne j} \alpha_l \cdot e_j^\prime \left(I - \frac{W}{\mu}\right)^{-1} e_l.
\]

Multiplying both sides by the bracketed expression, we rearrange:
\[
e_j^\prime v \left(1 - \frac{\theta \alpha_j}{\mu} \cdot e_j^\prime \left(I - \frac{W}{\mu}\right)^{-1} e_j \right) 
= \frac{\theta}{\mu} \sum_{l \ne j} \alpha_l (e_l^\prime v) \cdot e_j^\prime \left(I - \frac{W}{\mu}\right)^{-1} e_l.
\]

Now, using
\begin{itemize}
\item[(a)]The approximation \( \mu/\theta \sim \alpha_i \) from \eqref{approx.mu/theta},
\item[(b)] The expansion of \( e_j^\prime \left(I - \frac{W}{\mu} \right)^{-1} e_j = 1/\alpha_j + o_{hp}(\sqrt{N}/\theta) \) from Lemma~\ref{EigenVector.1},
\item[(c)] The fact that \( e_j^\prime v \) is bounded,
\end{itemize}
we obtain:
\begin{equation} \label{align.ortho.final}
e_j^\prime v \left( 1 - \frac{\alpha_j}{\alpha_i} + o_{hp}\left( \frac{\sqrt{N}}{\theta} \right) \right) 
= o_{hp}\left( \frac{\sqrt{N}}{\theta} \right).
\end{equation}

Thus, for \( j \ne i \), we conclude:
\[
e_j^\prime v = o_{hp}\left( \frac{\sqrt{N}}{\theta} \right).
\]

\smallskip
\noindent\textbf{Case 2: Diagonal alignment, \( j = i \).}  
From the second master equation \eqref{eq.master2}, we have:
\[
\sum_{l,m=1}^k \alpha_l \alpha_m (e_l^\prime v)(e_m^\prime v) \cdot e_l^\prime \left(I - \frac{W}{\mu}\right)^{-2} e_m = \theta^{-2} \mu^2.
\]

Isolating the \( (l,m) = (i,i) \) term and grouping the rest:
\[
\alpha_i^2 (e_i^\prime v)^2 \cdot e_i^\prime \left(I - \frac{W}{\mu}\right)^{-2} e_i 
= \theta^{-2} \mu^2 - \sum_{(l,m) \ne (i,i)} \alpha_l \alpha_m (e_l^\prime v)(e_m^\prime v) \cdot e_l^\prime \left(I - \frac{W}{\mu}\right)^{-2} e_m.
\]

Using the bounds:
\( e_j^\prime v = o_{hp}(\sqrt{N}/\theta) \) for \( j \ne i \),   \( \mu/\theta \sim \alpha_i \), and  Lemma~\ref{EigenVector.1} again for the matrix term,

we get:
\[
\alpha_i^2 (e_i^\prime v)^2 \cdot e_i^\prime \left(I - \frac{W}{\mu}\right)^{-2} e_i 
= \alpha_i^2 \left(1 + o_{hp}\left( \frac{\sqrt{N}}{\theta} \right)\right) + o_{hp}\left( \frac{N}{\theta^2} \right).
\]

Using the bound \( e_i^\prime \left(I - \frac{W}{\mu} \right)^{-2} e_i = 1 + o_{hp}(1) \) from Lemma~\ref{EigenVector.1}, we conclude:
\[
(e_i^\prime v)^2 = 1 + o_{hp}\left( \frac{\sqrt{N}}{\theta} \right),
\]
and hence
\[
e_i^\prime v = 1 + o_{hp}\left( \frac{\sqrt{N}}{\theta} \right),
\]
by the Taylor expansion \( (1 + x)^{1/2} = 1 + O(x) \) for small \( x \).

\smallskip
This completes the proof.
\end{proof}

\begin{proof}[Proof of Lemma \ref{ConcentrationBound}]
We divide the proof into two parts: \( n \ge 2 \) and \( n = 1 \).

\subsubsection*{Step 1: Case \( 2 \le n \le L \)}

We begin by recalling the moment bound assumption from Section~1. For \( k \ge 2 \),
\[
\mathbb{E}\left( \left| \frac{W_{ij}}{\sqrt{N}} \right|^k \right) \le \frac{k^{Bk}}{N^{k/2}} \le \frac{k^{Bk}}{N}.
\]
With a slight abuse of notation, we continue writing \( W_{ij} \) for the normalized entries \( \frac{W_{ij}}{\sqrt{N}} \).

Let us consider the centered linear statistic:
\begin{align}
\label{eq:centered}
e_1^\prime W^n e_2 - \mathbb{E}(e_1^\prime W^n e_2) 
= \sum_{\mathbf{i} \in \{1, \ldots, N\}^{n+1}} e_1(i_1) e_2(i_{n+1}) 
\left( \prod_{\ell = 1}^n W(i_\ell, i_{\ell+1}) - \mathbb{E} \prod_{\ell = 1}^n W(i_\ell, i_{\ell+1}) \right).
\end{align}

To exploit independence, we symmetrize \( W \) as \( W = W' + W'' \), where:
\[
W'(i,j) := W(i,j)\cdot \mathbf{1}_{i \le j}, \quad 
W''(i,j) := W(i,j)\cdot \mathbf{1}_{i > j}.
\]
This allows us to expand the product into \( 2^n \) terms. Fix one such term:
\begin{align*}
P_n := \sum_{\mathbf{i}} e_1(i_1) e_2(i_{n+1}) 
\left( \prod_{\ell=1}^n W'(i_\ell, i_{\ell+1}) - \mathbb{E} \prod_{\ell=1}^n W'(i_\ell, i_{\ell+1}) \right).
\end{align*}

Let \( H := \max_{1 \le j \le k} \sup_{x \in [0,1]} |h_j(x)| \), and assume without loss of generality that \( H, B > 1 \). Following the argument in Lemma~6.5 of \cite{erdHos2013spectral}, and restoring the scaling \( W_{ij} = \frac{W_{ij}}{\sqrt{N}} \), we obtain:
\begin{align}
\label{Expc.bound}
\mathbb{E}(|P_n|^p) \le \frac{(2H^2 n p)^{np} (np)^{Bnp} N^{np/2}}{N^{p/2}}.
\end{align}

Since there are at most \( 2^n \) such terms, all with the same bound, we apply Markov's inequality:
\begin{align*}
\mathbb{P}\left( \left| e_1^\prime W^n e_2 - \mathbb{E}(e_1^\prime W^n e_2) \right| > N^{(n-1)/2} (\log N)^{n\xi/4} \right)
&\le \frac{(4H^2 (np)^{B+1})^{np} N^{np/2}}{N^{p/2} N^{(n-1)p/2} (\log N)^{pn\xi/4}} \\
&= \frac{(\Sigma np)^{(B+1)np}}{(\log N)^{pn\xi/4}},
\end{align*}
where \( \Sigma := 4H^2 \). Choose \( \eta \in \left(1, \frac{\xi}{4(B+1)}\right) \) and define
\[
p := \frac{(\log N)^{\eta}}{\Sigma n}.
\]
Then \( p > 1 \) for large \( N \), and we obtain:
\begin{align*}
\mathbb{P}\left( \left| e_1^\prime W^n e_2 - \mathbb{E}(e_1^\prime W^n e_2) \right| > N^{(n-1)/2} (\log N)^{n\xi/4} \right) 
\le \exp\left( - \frac{(\log N)^{\eta}}{\Sigma} \left( \frac{\xi}{4} - \eta(B+1) \right) \log \log N \right),
\end{align*}
which decays faster than any inverse polynomial as \( N \to \infty \), uniformly for \( 2 \le n \le L \).

\subsubsection*{Step 2: Case \( n = 1 \)}

We use the decomposition:
\begin{align}
\label{Baisc}
e_1^\prime W e_2 = \sum_{i \le j} e_1(i) W(i,j) e_2(j) + \sum_{i > j} e_1(i) W(i,j) e_2(j) =: A + B.
\end{align}

By the proof of Lemma~3.8 in \cite{erdHos2013spectral}, we get (as in \eqref{Expc.bound}) for \( p \in 2\mathbb{N} \),
\[
\mathbb{E}(|A|^p) \le p^{(B+1)p} \left[ A_1 + \left( \frac{ \sum_{i \le j} |e_1(i) e_2(j)|^2 }{N} \right)^{1/2} \right]^p N^{p/2},
\]
where \( A_1 := \max_{i,j} |e_1(i) e_2(j)| = O\left( \frac{1}{N} \right) \). Thus, the quantity inside brackets is \( O\left( \frac{1}{\sqrt{N}} \right) \).

Let \( C_1 > 1 \), then:
\begin{align*}
\mathbb{P}(|A| > 4(\log N)^{\xi/4}) 
&\le \mathbb{P}\left( |A| > C_1 \sqrt{N} (\log N)^{\xi/4} \cdot \left[ A_1 + \left( \frac{ \sum_{i \le j} |e_1(i) e_2(j)|^2 }{N} \right)^{1/2} \right] \right) \\
&\le \frac{p^{(B+1)p}}{(\log N)^{p\xi/4}}.
\end{align*}

Now take \( \eta \in \left(1, \frac{\xi}{4(B+1)}\right) \) and \( p := (\log N)^{\eta} \), which yields:
\[
\mathbb{P}(|A| > 4(\log N)^{\xi/4}) \le \exp\left( - (\log N)^{\eta} (\tfrac{\xi}{4} - \eta(B+1)) \log \log N \right).
\]

The same argument holds for \( B \), and so by \eqref{Baisc}, we conclude:
\[
e_1^\prime W e_2 = O_{hp}((\log N)^{\xi/4}).
\]

Putting both parts together, we conclude that for all \( 1 \le n \le L \),
\[
e_1^\prime W^n e_2 - \mathbb{E}(e_1^\prime W^n e_2) = O_{hp}\left( N^{(n-1)/2} (\log N)^{n\xi/4} \right).
\]
This completes the proof.
\end{proof}

\begin{proof}[Proof of Lemma \ref{Deloc.est}]
The proof is similar to Lemma 7.10 in \cite{erdHos2013spectral}, with minor modifications tailored to our setup.

Fix \( j \in \{1,2,\ldots,k\} \) and \( i \in \{1,2,\ldots,N\} \). Then by definition,
\[
(W^n e_j)(i) = \sum_{l_1, l_2, \ldots, l_n} W(i, l_1) W(l_1, l_2) \cdots W(l_{n-1}, l_n) \cdot \frac{h_j\left(\frac{l_n}{N}\right)}{\sqrt{N}}.
\]

Let \( H := \max_{1 \le j \le k} \sup_{x \in [0,1]} |h_j(x)| \), and note that \( H < \infty \) since \( h_j \in C[0,1] \). Proceeding as in Lemma 7.10 of \cite{erdHos2013spectral}, we obtain the moment bound:
\[
\mathbb{E}\left( |(W^n e_j)(i)|^p \right) \le (H n p)^{n p} \cdot N^{\frac{(n-1)p}{2}}.
\]

Now apply Markov's inequality. For any \( \varepsilon > 0 \),
\begin{align*}
\mathbb{P}\left( |(W^n e_j)(i)| > N^{\frac{n-1}{2}} (\log N)^{\frac{n\xi}{4}} \right) 
&\le \frac{ \mathbb{E} \left( |(W^n e_j)(i)|^p \right) }{ N^{\frac{(n-1)p}{2}} (\log N)^{\frac{n\xi p}{4}} } \\
&\le \frac{ (H n p)^{n p} }{ (\log N)^{\frac{n \xi p}{4}} }.
\end{align*}

Choose \( \eta \in \left(1, \frac{\xi}{4} \right) \), and let
\[
p = \frac{ (\log N)^\eta }{H n }.
\]
This choice ensures \( p > 1 \) for large \( N \), and plugging it in gives:
\begin{align*}
\mathbb{P}\left( |(W^n e_j)(i)| > N^{\frac{n-1}{2}} (\log N)^{\frac{n\xi}{4}} \right)
&\le \exp\left( - (\log N)^\eta \left( \frac{\xi}{4} - \eta \right) \log \log N \right).
\end{align*}

This bound decays faster than any inverse polynomial, and it holds uniformly for all \( 2 \le n \le L \).

For the case \( n = 1 \), an identical argument as in the proof of Lemma~\ref{ConcentrationBound} applies, yielding the same high-probability estimate.

\smallskip
This completes the proof.
\end{proof}

\end{document}